\documentclass[11pt,a4paper]{article} 
\usepackage[utf8]{inputenc}
\usepackage[english]{babel}
\usepackage[T1]{fontenc}
\usepackage{amsmath}
\usepackage{amsthm}
\usepackage{comment}
\usepackage{amsfonts}
\usepackage{amssymb}
\usepackage{multicol}

\usepackage{url}
\usepackage{pdfpages}

\usepackage{graphicx}
\usepackage{tikz-qtree}
\usepackage[totoc]{idxlayout}

\usepackage[intoc,refpage]{nomencl}

\usepackage{graphicx}
\graphicspath{ {images/} }
\usepackage[all,cmtip]{xy}

\usepackage[toc,page]{appendix}
\usepackage[left=2cm,right=2cm,top=3cm,bottom=3cm]{geometry}
\usepackage{pgf,tikz}
\usepackage[font=small,skip=0pt]{caption}

\usetikzlibrary{arrows}

\makeatletter
\newcommand\ackname{Acknowledgements}
\if@titlepage
   \newenvironment{acknowledgements}{%
       \titlepage
       \null\vfil
       \@beginparpenalty\@lowpenalty
       \begin{center}%
         \bfseries \ackname
         \@endparpenalty\@M
       \end{center}}%
      {\par\vfil\null\endtitlepage}
\else
   \newenvironment{acknowledgements}{%
       \if@twocolumn
         \section*{\abstractname}%
       \else
         \small
         \begin{center}%
           {\bfseries \ackname\vspace{-.5em}\vspace{\z@}}%
         \end{center}%
         \quotation
       \fi}
       {\if@twocolumn\else\endquotation\fi}
\fi
\makeatother

\def\restriction#1#2{\mathchoice
              {\setbox1\hbox{${\displaystyle #1}_{\scriptstyle #2}$}
              \restrictionaux{#1}{#2}}
              {\setbox1\hbox{${\textstyle #1}_{\scriptstyle #2}$}
              \restrictionaux{#1}{#2}}
              {\setbox1\hbox{${\scriptstyle #1}_{\scriptscriptstyle #2}$}
              \restrictionaux{#1}{#2}}
              {\setbox1\hbox{${\scriptscriptstyle #1}_{\scriptscriptstyle #2}$}
              \restrictionaux{#1}{#2}}}
\def\restrictionaux#1#2{{#1\,\smash{\vrule height .8\ht1 depth .85\dp1}}_{\,#2}}

\newcommand*{\quantifies}{\raisebox{\depth}{\scalebox{-1}[-1]{$\mathsf{Q}$}}}


\DeclareMathOperator{\tp}{tp}

\DeclareMathOperator{\val}{val}
\DeclareMathOperator{\res}{res}
\DeclareMathOperator{\Res}{Res}
\DeclareMathOperator{\ac}{ac}

\DeclareMathOperator{\rv}{rv}
\DeclareMathOperator{\RV}{RV}

\DeclareMathOperator{\Th}{Th}

\author{Pierre Touchard\footnote{
The author was funded by the Deutsche Forschungsgemeinschaft (DFG, German Research Foundation)  through SFB 878 and under Germany ’s Excellence Strategy – EXC 2044 – 390685587, Mathematics Münster: Dynamics – Geometry - Structure.  This research has been also supported by the DAAD through the `Kurzstipendien für Doktoranden 2020/21' programme.}
}
\title{Stably Embedded Submodels of Henselian Valued Fields \footnotetext{2020 \textit{Mathematics Subject Classification}: Primary: 03C45; Secondary: 03C60, 12J10}}

\theoremstyle{plain}
\newtheorem{theorem}{Theorem}[section]
\newtheorem{lemma}[theorem]{Lemma}
\newtheorem{proposition}[theorem]{Proposition}
\newtheorem{claim}{Claim}
\newtheorem{remark}[theorem]{Remark}

\newtheorem{observation}[theorem]{Observation}
\newtheorem{fact}[theorem]{Fact}

\theoremstyle{definition}
\newtheorem{definition}[theorem]{Definition}
\newtheorem{corollary}[theorem]{Corollary}

\theoremstyle{remark}
\newtheorem*{remark*}{Remark}

\newtheorem*{notation}{Notation}
\newtheorem*{question}{Question}
\definecolor{black}{rgb}{0,0,0}
\newtheorem*{example}{Example}
\newtheorem*{examples}{Examples}


\sloppy 
\begin{document}

\maketitle

\begin{abstract}
    We show a transfer principle for the property that all types realised in a given elementary extension are definable. It can be written as follows: a Henselian valued fields is stably embedded in an elementary extension if and only if its value group is stably embedded in its corresponding extension, its residue field is stably embedded in its corresponding extension, and the extension of valued fields satisfies a certain algebraic condition. We show for instance that all types over the Hahn field $\mathbb{R}((\mathbb{Z}))$ are definable. Similarly, all types over the quotient field of the Witt ring $W(\mathbb{F}_p^{\text{alg}})$ are definable. This extends a work of Cubides and Delon and of Cubides and Ye.
\end{abstract}
\tableofcontents

\newpage

 \section*{Introduction}
    In model theory of unstable structures, the notion of stable embeddedness plays an important role. A subset $A$ of an $\mathrm{L}$-structure $\mathcal{M}$ is said to be stably embedded if any intersection of $A$ and of a definable set is the intersection of $A$ and of an $A$-definable set. By a duality between parameters/variables, it is equivalent to say that all types over $A$ are definable over $A$. Understanding stably embedded definable subsets can be a crucial step in order to understand the whole structure. For instance, in the study of Henselian valued fields of equicharacteristic $0$, the value group and residue field are stably embedded substructures that play a primordial role. Another aspect concerns stably embedded submodels of the structure. Given a theory $T$, one can ask: when is a model $\mathcal{M}$ stably embedded in a given elementary extension $\mathcal{N}$? in all elementary extensions?
    By an observation of van den Dries (\cite{VdD86}), being a stably embedded submodel of a real closed field is a first order property in the language of pairs. Motivated by a similar question, Cubides and Delon have shown in \cite{CD16} that an algebraically closed valued field $\mathcal{K}$ is stably embedded in an elementary extension  $\mathcal{L}$ if and only if the valued field extension $\mathcal{K} \preceq \mathcal{L}$ is \textit{separated} (see Definition \ref{DefinitionSeparated}) and the small value group $\Gamma_K$ is stably embedded in the larger value group $\Gamma_L$. Recently, Cubides and Ye proved in \cite{CY19} a similar statement for p-adically closed valued fields and real closed valued fields. In this paper, we give a generalisation to the following (incomplete) theories of Henselian valued fields, which we call here \textit{benign} theories:
    \begin{enumerate}
        \item Henselian valued fields of characteristic $(0,0)$,
        \item algebraically closed valued fields, 
        \item algebraically maximal Kaplansky Henselian valued fields,
    \end{enumerate}
    In fact, we give two statements:

{
    \renewcommand{\thetheorem}{\ref{TheoremReductionGammakElementaryPairs}}
    \begin{theorem}
    Assume that $T$ is a benign theory of Henselian valued fields. 
    Let $\mathcal{K} \preceq \mathcal{L}$ be an elementary pair of models of $T$. The following are equivalent:
        \begin{enumerate}
            \item $\mathcal{K}$ is stably embedded in $\mathcal{L}$,
            \item The extension $\mathcal{L}/\mathcal{K}$ is separated, the residue field of $\mathcal{K}$ is stably embedded in the residue field of $\mathcal{L}$ and the value group of $\mathcal{K}$ is stably embedded in the value group of $\mathcal{L}$.
        \end{enumerate}
    \end{theorem}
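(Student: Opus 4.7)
The direction (1) $\Rightarrow$ (2) splits into three claims, two of which are essentially automatic. The residue field and value group are stably embedded sorts in every benign theory of Henselian valued fields (a standard consequence of the Ax--Kochen--Ershov style relative quantifier elimination these theories admit), so if every type over $\mathcal{K}$ is definable over $\mathcal{K}$, then in particular the types realised in the residue-field and value-group sorts are definable over the corresponding small sorts; this gives stable embeddedness of the residue field of $\mathcal{K}$ in that of $\mathcal{L}$, and likewise for the value groups. The separatedness claim I would handle by contraposition along the lines of Cubides--Delon in the ACVF case: if $L/K$ fails to be separated, then some finite-dimensional $K$-subspace $V \subseteq L$ has no valuation basis, and one extracts from this failure an element whose type over $\mathcal{K}$ cannot be defined, essentially because $V$ encodes $L$-valued linear data invisible to any $K$-formula.

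For (2) $\Rightarrow$ (1), fix a tuple $b$ from $\mathcal{L}$ and aim at definability of its type over $\mathcal{K}$. The strategy rests on two ingredients. First, benign theories admit a relative quantifier elimination down to the residue field and value group: every formula $\varphi(x,y)$ in the language of valued fields is equivalent, modulo $T$, to a Boolean combination of value-group-sort formulas in terms $\val(P_i(x,y))$ and residue-field-sort formulas in terms $\res(Q_j(x,y))$, where the $P_i, Q_j$ are polynomials over $K$. Second, separatedness of $L/K$ supplies, for any finite collection of $K$-polynomials evaluated at $b$, a valuation basis $(e_\alpha)$ of the finite-dimensional $K$-subspace they span, so that $\val(P(b,y)) = \min_\alpha (\val(c_\alpha(y)) + \val(e_\alpha))$ with $c_\alpha(y) \in K$ depending $K$-definably on $y$, and analogously for residues. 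Plugging this reduction into $\varphi$ shows that the set $\{y \in K : \varphi(b,y)\}$ is the pullback along a $K$-definable map $K \to \Gamma_K^N \times k_K^M$ of a set defined in $\Gamma_L \times k_L$ by parameters from $\Gamma_L \times k_L$. By stable embeddedness of $\Gamma_K$ in $\Gamma_L$ and of $k_K$ in $k_L$, this target set intersected with $\Gamma_K^N \times k_K^M$ is definable over $\Gamma_K \times k_K$, whence its pullback is $K$-definable.

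The main obstacle I anticipate is making the valuation-basis reduction genuinely uniform in the parameter $y$: the basis $(e_\alpha)$ lives in $L$, not $K$, and depends on which polynomial family is being considered; the coordinate maps $y \mapsto c_\alpha(y)$ and the auxiliary fixed data $\val(e_\alpha)$ and $\res$-values of monomials in the $e_\alpha$ must be organised into $K$-definable, respectively $\Gamma_L$- and $k_L$-parametrised, families so that the reduction really is a single formula in $y$. A secondary subtlety arises when $b$ is transcendental: the finite-dimensional $K$-subspace needed grows with the complexity of $\varphi$, and one must check that a compactness argument assembles per-formula definability into definability of the whole type. Finally, one has to verify that this separated-plus-stably-embedded-sorts machinery works uniformly across the three classes of benign theories, which in the algebraically maximal Kaplansky case may require some extra care due to the interaction with the residue characteristic.
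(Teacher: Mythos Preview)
Your overall strategy is sound, and your argument that stable embeddedness of $\mathcal{K}$ forces stable embeddedness of $k_K$ and $\Gamma_K$ is fine. There are, however, two genuine gaps.

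\textbf{The $(2)\Rightarrow(1)$ direction.} The relative quantifier elimination you invoke---formulas reducing to Boolean combinations of $\Gamma$-formulas in the $\val(P_i)$ and $k$-formulas in the $\res(Q_j)$---does not hold for benign theories in the language $\mathrm{L}_{\Gamma,k}$. What holds is elimination relative to $\RV$ (property $(\text{EQ})_{\RV}$), or elimination relative to $\Gamma$ and $k$ \emph{after adding an angular component} (property $(\text{EQ})_{\Gamma,k,\ac}$); but $\ac$ is not in general definable, so you cannot simply drop it. Your valuation-basis computation $\val(P(b,y))=\min_\alpha(\val(c_\alpha(y))+\val(e_\alpha))$ is correct, but ``analogously for residues'' is precisely where this breaks: the residue data of a sum along a separating basis are not determined by residues of the summands alone, only by their $\rv$-images. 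The paper therefore makes two separate reductions. First (Theorem~\ref{PropRV}) it uses separatedness exactly as you do, but lands in $\RV$: from $\rv(P(b,y))=\bigoplus_\alpha\rv(c_\alpha(y))\rv(e_\alpha)$ and $\RV_K\subseteq^{st}\RV_L$ one obtains $K\subseteq^{st}\mathcal{L}$. Second (Proposition~\ref{ProAbeGrp}, Corollary~\ref{CorollaryStablyEmbeddedSubShortExactSequence}) it views $1\to k^\times\to\RV^\star\to\Gamma\to 0$ as an $\{A\}$-$\{C\}$-enriched short exact sequence of abelian groups and applies the Aschenbrenner--Chernikov--Gehret--Ziegler relative QE (Fact~\ref{FactACGZ}) to deduce $\RV_K\subseteq^{st}\RV_L$ from $k_K\subseteq^{st}k_L$ and $\Gamma_K\subseteq^{st}\Gamma_L$. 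This second step, and the detour through $\RV$, are the missing ingredients in your proposal; your worry about uniformity in $y$ is by contrast not the real obstacle.

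\textbf{Separatedness in $(1)\Rightarrow(2)$.} The argument is not a contraposition extracting a non-definable type from a non-separated subspace. The paper (Proposition~\ref{PropRV2}) argues directly by induction on $\dim_K V$: if $V$ already has a separating basis $c_0,\dots,c_{n-1}$, then definability of $\tp(c_0,\dots,c_{n-1}/K)$ lets one interpret the valued $K$-vector space $V$ inside $K^n$. Passing to the maximal immediate extension of $K$---still a model by $(\text{Im})$ and elementary by AKE---one sees that $V$ is \emph{definably} spherically complete. Hence for any $a\in L\setminus V$ the set $\{\val(w-a):w\in V\}$ achieves a maximum, and any witness $c_n$ extends the separating basis to $V+Ka$. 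Your sketch does not supply this mechanism.
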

    \addtocounter{theorem}{-1}
    }   
     Even if in practice, we will mainly consider elementary extensions of benign valued fields as the situation is simpler, the second statement concerns non elementary extensions of valued fields: 
    
    {
\renewcommand{\thetheorem}{\ref{TheoremReductionGammakNonElementaryPairs}}
    \begin{theorem}
    Assume that $T$ is a benign theory of Henselian valued fields. 
    Let $\mathcal{L}/ \mathcal{K}$ be a separated extension of valued fields with $\mathcal{L}\models T$. 
    Assume either
    \begin{itemize}
            \item that the value group of $\mathcal{K}$ is a pure subgroup of the value group of $\mathcal{L}$,
            \item or that the multiplicative group of the residue field of $\mathcal{L}$ is divisible. 
    \end{itemize}
    
    The following are equivalent:
    
        \begin{enumerate}
            \item $K$ is stably embedded in $\mathcal{L}$,
            \item the residue field of $\mathcal{K}$ is stably embedded in the residue field of $\mathcal{L}$ and the value group of $\mathcal{K}$ is stably embedded in the value group of $\mathcal{L}$.
        \end{enumerate}
    \end{theorem}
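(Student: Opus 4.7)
The plan is to reduce the statement to the elementary case (Theorem \ref{TheoremReductionGammakElementaryPairs}) by finding a suitable intermediate model of $T$ inside a saturated elementary extension of $\mathcal{L}$.

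The direction (1) $\Rightarrow$ (2) does not require either of the two alternative hypotheses and is essentially independent of the separatedness assumption. In any benign theory, the value group and residue field are stably embedded sorts of the ambient valued field. Thus if $\mathcal{K}$ is stably embedded in $\mathcal{L}$, any $\mathcal{L}$-definable subset of $\Gamma_K^n$ is already $\mathcal{K}$-definable, and projecting the defining formula to the value-group sort yields a definition with parameters from $\Gamma_K$ alone; the same argument applies in the residue field.

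For the converse (2) $\Rightarrow$ (1), I would work inside a sufficiently saturated elementary extension $\mathcal{L}^{*} \succeq \mathcal{L}$ and construct an intermediate valued field $\mathcal{K} \subseteq \mathcal{K}^{*} \preceq \mathcal{L}^{*}$ which is itself a model of $T$, such that (i) the extension $\mathcal{L}^{*}/\mathcal{K}^{*}$ remains separated, (ii) $\Gamma_{K^{*}}$ is stably embedded in $\Gamma_{L^{*}}$ and $k_{K^{*}}$ is stably embedded in $k_{L^{*}}$, and (iii) stable embeddedness of $\mathcal{K}^{*}$ in $\mathcal{L}^{*}$ descends to stable embeddedness of $\mathcal{K}$ in $\mathcal{L}$. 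Granted these, Theorem \ref{TheoremReductionGammakElementaryPairs} applied to $\mathcal{K}^{*} \preceq \mathcal{L}^{*}$ closes the argument. The natural candidate for $\mathcal{K}^{*}$ is the closure of $\mathcal{K}$ inside $\mathcal{L}^{*}$ under the operations needed to obtain a model of $T$---typically Henselisation, together with the algebraic operations specific to the theory (algebraic closure for $\mathrm{ACVF}$, or Artin--Schreier and Kummer-type extensions in the Kaplansky case). The two alternative hypotheses enter precisely at this step: purity of $\Gamma_K$ in $\Gamma_L$ ensures that the value group of $\mathcal{K}^{*}$ is a divisibility hull of $\Gamma_K$ inside $\Gamma_L$, which preserves stable embeddedness by a purity argument for ordered abelian groups; divisibility of $k_L^{\times}$ plays the analogous role on the residue field side, guaranteeing that the residue field extension introduced by building $\mathcal{K}^{*}$ does not destroy the stable embeddedness of (ii).

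The main obstacle is carrying out this construction so as to secure (i) and (iii) simultaneously. Preservation of separatedness under algebraic extensions is technical but essentially standard once one understands how separating bases transform under Henselisation and algebraic closure. Ensuring (iii) is more delicate: it requires that $\mathcal{K}^{*}$ lie essentially in the definable (or algebraic) closure of $\mathcal{K}$ in $\mathcal{L}^{*}$, so that $\mathcal{K}$-definability can be recovered from $\mathcal{K}^{*}$-definability. This is not automatic and will likely force an inductive construction in which, at each step, one verifies that the newly adjoined elements are $\mathcal{K}$-definable. An alternative, should the descent prove too costly, is to bypass $\mathcal{K}^{*}$ altogether and reprove the theorem by a direct type-decomposition argument in the spirit of Theorem \ref{TheoremReductionGammakElementaryPairs}, letting the purity or divisibility hypothesis close the combinatorial gap that elementarity otherwise provides.
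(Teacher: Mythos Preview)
Your plan for $(2)\Rightarrow(1)$ is circular: in the paper, Theorem~\ref{TheoremReductionGammakElementaryPairs} is deduced \emph{from} the present theorem (an elementary subgroup is automatically pure, and Proposition~\ref{PropRV2} supplies separatedness), not the other way around. Even waiving this, the construction does not go through. To obtain $\mathcal{K}^{*}\preceq\mathcal{L}^{*}$ you need, by $(\text{AKE})_{\Gamma,k}$, that $k_{K^{*}}\preceq k_{L^{*}}$ and $\Gamma_{K^{*}}\preceq\Gamma_{L^{*}}$; closing $\mathcal{K}$ under Henselisation or algebraic operations will not produce this in general, and if you enlarge $\mathcal{K}^{*}$ enough to force elementarity you lose any hope of the descent step (iii). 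Your reading of the two hypotheses is also off: they have nothing to do with controlling a passage $\mathcal{K}\to\mathcal{K}^{*}$.

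The paper's route is direct and uses no intermediate model. By $(\text{EQ})_{\RV}$ and separatedness, Theorem~\ref{PropRV} gives $K\subseteq^{st}\mathcal{L}\Leftrightarrow\RV_K\subseteq^{st}\RV_L$. One then treats $\RV$ as the middle term of the pure exact sequence $1\to k^{\times}\to\RV^{\star}\to\Gamma\to 0$ (the field and order structure being an $\{A\}$-$\{C\}$-enrichment) and applies the quantifier elimination of Aschenbrenner--Chernikov--Gehret--Ziegler (Fact~\ref{FactACGZ}); this is Proposition~\ref{ProAbeGrp}, which reduces $\RV_K\subseteq^{st}\RV_L$ to $\Gamma_K\subseteq^{st}\Gamma_L$ together with $\bigcup_n\rho_n(\RV_K)\subseteq^{st}\bigcup_n k_L^{\times}/(k_L^{\times})^n$. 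The two alternative hypotheses enter precisely at this last step: either one forces $\rho_n(\RV_K^{\star})=\pi_n(k_K^{\times})$ for every $n$, so that the condition on the $\rho_n$'s collapses to $k_K\subseteq^{st}k_L$ (Corollary~\ref{CorollaryStablyEmbeddedSubShortExactSequence}). Your argument for $(1)\Rightarrow(2)$ via purity of $\Gamma$ and $k$ with control of parameters is fine, and you are right that this direction needs neither extra hypothesis.
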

\addtocounter{theorem}{-1}
}

    In particular, the field of $p$-adics $\mathbb{Q}_p$ is stably embedded in $\mathbb{C}_p$, the completion of its algebraic closure. With some additional observation, one can adapt these arguments and treat the case of unramified mixed characteristic Henselian valued fields with perfect residue field. We show indeed a similar statement for elementary pairs:

{
\renewcommand{\thetheorem}{\ref{TheoremReductionGammakElementaryPairsMixeCharacteristic}}
    \begin{theorem} 
        Let $\mathcal{K}$ be a unramified mixed characteristic Henselian valued field with perfect residue field and $\mathcal{L}$ be an elementary extension. The following are equivalent:
        \begin{enumerate}
            \item $\mathcal{K}$ is stably embedded (resp.\ uniformly stably embedded) in $\mathcal{L}$;
            \item The extension $\mathcal{L}/\mathcal{K}$ is separated, the residue field of $\mathcal{K}$ is stably embedded (resp.\ uniformly stably embedded) in the residue field of $\mathcal{L}$ and the value group of $\mathcal{K}$ is stably embedded (resp.\ uniformly stably embedded) in the value group of $\mathcal{L}$.
        \end{enumerate}
    \end{theorem}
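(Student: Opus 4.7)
The plan is to follow the scheme used to prove Theorem \ref{TheoremReductionGammakElementaryPairs} for benign theories, inserting the structural input needed for mixed characteristic. The main additional ingredient is a relative quantifier elimination for unramified Henselian valued fields of mixed characteristic with perfect residue field: types over $\mathcal{K}$ are controlled by their traces on the residue field, the value group, and the tower of leading-term structures $\RV_n$ ($n < \omega$). Under the unramifiedness and perfectness hypotheses, each $\RV_n$ is a canonical extension of the residue field by a quotient of the value group, split by Teichmüller / Witt-vector lifts; this plays, at each finite $p$-adic precision, the role of the clean residue-field lift available in equicharacteristic zero.

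For the direction $(1) \Rightarrow (2)$, I would reuse the benign argument almost verbatim. Stable (resp.\ uniform stable) embeddedness of the residue field and of the value group follows from the transitivity of stable embeddedness, since these sorts are themselves stably embedded in the ambient valued field. The separatedness of $\mathcal{L}/\mathcal{K}$ is extracted as in Theorem \ref{TheoremReductionGammakElementaryPairs}, by observing that a failure of separatedness would produce a non-definable type over $\mathcal{K}$ realised in $\mathcal{L}$; the argument is purely in terms of valued-field-theoretic bases and does not use equicharacteristic.

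For $(2) \Rightarrow (1)$, given a tuple $b \in \mathcal{L}$, the task is to show that $\tp(b/\mathcal{K})$ is definable. Using separatedness and the relative quantifier elimination, this type is determined by the family $\tp(\RV_n(P(b))/\RV_n(\mathcal{K}))$ as $P$ ranges over $\mathcal{K}[X]$ and $n$ ranges over $\mathbb{N}$. Each such type decomposes via Teichmüller coordinates into residue-field data and value-group data, both definable by hypothesis (2); separatedness then assembles the pieces into a definition of the full type. The uniform statement is handled in parallel, with uniformity tracked through the Witt-vector coordinate presentation and the uniform definability available in the residue field and value group.

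The main obstacle lies in controlling the tower $(\RV_n)_{n < \omega}$ coherently. Unlike in the equicharacteristic case, one cannot collapse all information into a single residue-field sort: the definitions obtained at finite $p$-adic precision must patch together, and in the uniform setting they must do so with a uniform parameter bound. Perfectness of the residue field is what permits lifting residue-field structure to the truncated Witt vectors uniformly in $n$; unramifiedness ensures that $v(p)$ is a minimal positive element, providing a canonical combinatorial indexing of the layers. Verifying that this lift is compatible with the transfer arguments from the benign case --- especially in the uniform setting --- should be the technical heart of the ``additional observation'' mentioned in the introduction.
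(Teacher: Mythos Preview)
Your overall scheme is right, and the $(1)\Rightarrow(2)$ direction is essentially correct: separatedness comes from the same argument as in Proposition~\ref{PropRV2} (using $(\text{Im})_{\RV_{<\omega}}$ and $(\text{AKE})_{\RV_{<\omega}}$ in place of their benign analogues), and stable embeddedness of $k$ and $\Gamma$ follows from purity and stable embeddedness of these sorts.

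The gap is in $(2)\Rightarrow(1)$. You assert that each $\RV_n$ is ``split by Teichm\"uller / Witt-vector lifts'', so that types decompose into residue-field data and value-group data. But the Teichm\"uller map is a multiplicative section $k^\times \to W_n(k)^\times$ on the \emph{kernel} side of the exact sequence
\[
1 \longrightarrow W_n(k)^\times \longrightarrow \RV_n^\star \longrightarrow \Gamma \longrightarrow 0;
\]
it does not furnish a section of $\RV_n^\star \to \Gamma$, and there is no canonical one. Hence you cannot reduce to a product $W_n(k)^\times \times \Gamma$ as in the $\ac$-map argument of Corollary~\ref{CorGammakac}. Without such a splitting, your ``decompose via Teichm\"uller coordinates'' step does not go through.

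The paper's route avoids splitting altogether. One views $\RV_{\leq n}$ as a $\{W_n(k)^\times\}$-$\{\Gamma\}$-enriched pure short exact sequence of abelian groups and applies the Aschenbrenner--Chernikov--Gehret--Ziegler quantifier elimination (Fact~\ref{FactACGZ}) via Corollary~\ref{CorollaryStablyEmbeddedSubShortExactSequence}. Two structural inputs make this work: perfectness of $k$ gives $\mathcal{O}_n \simeq W_n(k)$ (Proposition~\ref{KerValRV_n}), and the whole tower $W_{\leq n}(k)$ is interpretable in $k$, so $k_K \subseteq^{st} k_L$ transfers to $W_{\leq n}(k_K) \subseteq^{st} W_{\leq n}(k_L)$. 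The ``tower coherence'' problem you anticipate is in fact a non-issue: any given formula only involves finitely many $\RV_m$, so one fixes $n$ large enough, works entirely inside $\RV_{\leq n}$, and no infinite patching is needed.
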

    \addtocounter{theorem}{-1}
}

    Here is an overview of the present paper. Section \ref{SectionPreliminaries} is dedicated to preliminaries: we first define two notions of stable embeddedness (uniform and non-uniform) before discussing some easy cases of stably embedded elementary pairs. We will also illustrate these notions with a digression on the theory of the random graph in Appendix \ref{OnPairsOfRandomGraphs}. In Subsection \ref{SectionModelTheoryAlgebraicStructures}, we recall some relative quantifier elimination results and other relevant properties that benign Henselian valued fields share. Then, we prove the theorems cited above in Section \ref{SectionStablyEmbeddedPairs}. The demonstration for benign valued fields is in Subsection \ref{SectionReductionStableEmbeddednessBenignValuedFields}. It includes a similar reduction for submodels of short exact sequences of abelian groups. In Subsection \ref{SectionReductionStableEmbeddednessUnramifiedMixedCharacteristic}, we treat the case of elementary extension of unramified mixed characteristic Henselian valued fields with perfect residue field, using again the result on short exact sequences of abelian groups. Finally, in Subsection \ref{SectionPairs}, we discuss elementary pairs, in particular elementary pairs of valued fields (with a predicate for the smaller model), and partially answer to the following question: when is the class of stably embedded elementary pairs of Henselian valued fields axiomatisable?

\section{Preliminaries} \label{SectionPreliminaries}

 We will assume the reader to be familiar with basic model theory concepts, and in particular with standard notations. One can refer to \cite{TZ12}. Symbols $x,y,z,\dots$ will usually refer to tuples of variables, $a,b,c,\dots$ to parameters. Capital letters $K,L,M,N,\dots$ will refer to sets, and calligraphic letters $\mathcal{K},\mathcal{L},\mathcal{M},\mathcal{N},\dots$ will refer to structures with respective base sets $K,L,M,N,\dots$. If there is no ambiguity, we may respectively name a very saturated elementary extension with blackboard bold letters $\mathbb{K},\mathbb{L},\mathbb{M},\dots$ Languages will be denoted will a roman character $\mathrm{L},\mathrm{L}',\mathrm{L}_{Rings}, \mathrm{L}_{\Gamma,k}$ etc. 
 We will also use Rideau-Kikuchi's terminology about enrichment and resplendence. The reader can refer to \cite[Annexe A]{Rid17} for a detailed exposition. We will also use notation and some definitions used in \cite{Tou20a}.

 

\subsection{Stable embeddedness}\label{SubsecStableEmbeddedness}
    As Cubides, Delon and Ye, we will consider the question of stably embedded elementary pairs of certain valued fields and establish a transfer principle. We will often use notations from \cite{CD16}, that we will recall. As we do not only consider definable stably embedded subsets, it appears natural to distinguish two notions of stable embedddness. Fortunately, this distinction will not bring any difficulties: the same proofs for our transfer principle holds \textit{mutatis mutandis} for the uniform and non-uniform case.

    Let $\mathrm{L}$ be any first order language, and $T$ be an $\mathrm{L}$-theory. 
    
    \begin{definition} [stable embeddedness for arbitrary sets]\label{DefinitionStableEmbeddness}
        Let $\mathcal{M}$ be an $\mathrm{L}$-structure. A subset $S\subseteq M$ is said to be \textit{stably embedded in} $\mathcal{M}$\index{Stable embeddedness! non-uniform} if for every formula $\phi(x,y)$ and for every tuple of parameters $a\in M^{\vert y \vert}$, there is another formula $\psi(x,z)$ and a tuple of parameters $b\in S^{\vert z \vert}$ such that $\phi(S^{\vert x\vert},a)=\psi(S^{\vert x\vert},b)$. In this case, we write $S \subseteq^{st} \mathcal{M}$ \nomenclature[SE]{$S \subseteq^{st} \mathcal{M}$}{}.
    \end{definition}

    Notice that $\psi(x,z)$ may depend on the parameter $a\in M$. For this reason, this definition is not the usual one. It results from this `non-uniformity' the following: 
    
    \begin{remark}\label{Rmkstnonuniform}
        Let $\mathcal{M}\preceq \mathcal{M}'$ be two $\mathrm{L}$-structures, and $D(x)$ be a $\mathrm{L}$-formula with $\vert x \vert=1$. Assume that $D(M')$ is stably embedded in $\mathcal{M}'$. Then $D(M)$ is stably embedded in $\mathcal{M}$. However, the converse does not always hold i.e., it is possible to have $D(M)$ stably embedded in $\mathcal{M}$, but $D(M')$ to fail to be stably embedded in $\mathcal{M}'$ (see example below). In other words, the property of \emph{stable embeddedness} of a definable set is not in general preserved by the elementary extension relation.
    \end{remark}
    \begin{example}
        We consider the atomic Boolean algebra $\mathcal{M}=\{\mathcal{P}_f(\omega) \cup \mathcal{P}_c(\omega), \cup, \cap, \cdot^c, 0, 1\}$, where $\mathcal{P}_f(\omega)$ is the set of finite subsets of $\omega$ and $\mathcal{P}_c(\omega)$ is the set of cofinite subsets of $\omega$. We refer to the partial order of inclusion as \emph{majoring}. By \cite{Poi00}, one has quantifier elimination if we add unary predicates $A_n$, $n\in \mathbb{N}$, for the elements majoring exactly $n$ atoms (here, a predicate for the subsets of $\omega$ of $n$ elements). Then, one sees that the set of atoms $A_1^{\mathcal{M}}$, is stably embedded in $\mathcal{M}$. Indeed, all definable subsets of $A_1^{\mathcal{M}}$ are finite or cofinite and more generally, definable subsets of $(A_1^{\mathcal{M}})^n$ are finite Boolean combination of products of such sets and diagonals.  However in an $\aleph_0$-saturated elementary extension $\mathcal{M}'$, we have an element $f$ which is majoring infinitely many atoms and not majoring infinitely many other atoms. This allows us to define an infinite and co-infinite subset of $A_1^{\mathcal{M}'}$, which naturally cannot be defined using only parameters in $A_1^{\mathcal{M}'}$.
    \end{example}
    
    \begin{definition}
        Let $S$ be a subset of an $\mathrm{L}$-structure $\mathcal{M}$, and $a\in M^{\vert x \vert}$ a tuple of elements. We say that the type $p(x)=\tp(a/S)$ is \emph{definable} if for every formula $\phi(x,y)$, there is an $\mathrm{L}_{S}$-formula $\psi(y,b)$ such that for all $s\in S$ \[p(x) \vdash \phi(x,s) \text{ if and only if } \mathcal{M} \models \psi(s,b).\] 
    \end{definition}
    
   Let us recall some notations from \cite{CD16}. Notice that it has a slightly more general meaning, as we also consider non-elementary extensions $\mathcal{M}\subseteq \mathcal{N}$.
   
    \begin{notation}\label{NotationDK}
        Let $\mathcal{N}$ be an $\mathrm{L}$-structure and $S$ be a subset. We write 
    $T_n(S,\mathcal{N})$\nomenclature[SE]{$T_n(S,\mathcal{M})$}{} if all $n$-types over $S$ (for the theory of $\mathcal{N}$) realised in $\mathcal{N}$ are definable. If $\mathcal{M}$ is an elementary substructure of $\mathcal{N}$, we might write $T_n(\mathcal{M},\mathcal{N})$ (both in curvy letters) instead of $T_n(M,\mathcal{N})$ to emphasise it. 
    We write $T_n(\mathcal{M})$ if all $n$-types over $\mathcal{M}$ (in any elementary extension) are definable.
    \end{notation}

    One sees immediately the following fact:
    
    \begin{fact}\label{FactDualitySEDefinabilityTypes}
    A set $S$ is stably embedded in a structure $\mathcal{M}$ if and only if $T_n(S,\mathcal{M})$ holds for all $n \in \mathbb{N}$. 
    \end{fact}

    We give now a natural `uniform' version of the notion of stable embeddedness and that of definable types (Definitions  \ref{DefUniStaEmb} and \ref{DefUniDefTyp}). 
    
    \begin{definition}\label{DefUniStaEmb}
        A subset $S$ of a structure $\mathcal{M}$ is said to be \textit{uniformly stably embedded in} $\mathcal{M}$\index{Stable embeddedness! uniform} if for every formula $\phi(x,y)$, there is a formula $\psi(x,z)$ such that: 
        \begin{align} \tag*{$(\star)$} 
         \text{for all $a\in M^{\vert y \vert}$, there is a $b\in S^{\vert z \vert}$ such that $\phi(S^{\vert x\vert},a)=\psi(S^{\vert x\vert},b)$. }
		\end{align}         
         
          In that case, we write $S \subseteq^{ust} \mathcal{M}$. \nomenclature[SE]{$S \subseteq^{ust} \mathcal{M}$}{}\\

    \end{definition}

    When $S$ is a definable subset of $\mathcal{M}$, $(\star)$ is a first order property of the formula $\phi(x,y)$ and $\psi(x,z)$. So it is in particular preserved by elementary extension. It then makes sense to say that a formula is \textit{uniformly stably embedded} in a given complete theory $T$. Following the usual convention, we will omit to specify `uniform' in such a context. The reason is:\\
    
      \begin{remark}\label{RmqUstMonsterModel}
        Assume $T$ is complete. Let $D(x)$ be an $\mathrm{L}$-formula. The following statements are equivalent:
        \begin{enumerate}
            \item $D(M)$ is uniformly stably embedded in every model $\mathcal{M}$ of $T$,
            \item $D(M)$ is uniformly stably embedded in some model $\mathcal{M}$ of $T$,
            \item $D(M)$ is stably embedded in every model $\mathcal{M}$ of $T$,
            \item $D(\mathbb{M})$ is stably embedded in an $\vert \mathrm{L}\vert$-saturated model $\mathbb{M}$ of $T$.

        \end{enumerate}
    \end{remark}
    
    \begin{proof}
        $(1)\Rightarrow (2)$ and  $(1) \Rightarrow (3) \Rightarrow (4)$ are obvious. $(2) \Rightarrow (1)$ follows from the fact that $(\star)$ is first order. It remains to prove $(4)\Rightarrow (2)$. It immediately follows from compactness, but we give few details here. We have to show that         \[D(\mathbb{M})\subseteq^{st} \mathbb{M} \Rightarrow D(\mathbb{M})\subseteq^{ust} \mathbb{M},\]
        for an $\vert \mathrm{L}\vert$-saturated model $\mathbb{M}$ of $T$. Take such a model $\mathbb{M}$. If $\vert D(\mathbb{M})\vert < 2$, there is nothing to do. Assume that there is an $\mathrm{L}$-formula $\phi(x,y)$ such that for all finite sets $\Delta$ of $\mathrm{L}$-formulas $\psi(x,z)$, one has
        \[\mathbb{M} \models \exists b_\Delta \ \bigwedge_{\psi(x,z)\in \Delta} \forall c\in D \ \exists a\in D  \ \phi(a,b_\Delta) \nLeftrightarrow \psi(a,c). \]
        By compactness and $\vert \mathrm{L}\vert$-saturation, there is an element $b\in \mathbb{M}$ such that $\phi(D(\mathbb{M}),b)$ is not $\mathrm{L}(D(\mathbb{M}))$-definable. This a contradiction. 
        We have shown that there is a finite set $\Delta$ of $\mathrm{L}$-formulas such that
        \[ \mathbb{M} \models \forall b \ \bigvee_{\psi(x,z)\in \Delta} \exists c\in D\ \forall a\in D \ \phi(a,b) \Leftrightarrow \psi(a,c).\]
        Since $\vert D(\mathbb{M})\vert \geq 2$, one can use new parameters to encode all formulas in $\Delta$ in a single one. In other words, we got an $\mathrm{L}$-formula $\Psi(x,z')$ as wanted:
        \[ \mathbb{M} \models \forall b \  \exists c'\in D\ \forall a\in D \ \phi(a,b) \Leftrightarrow \Psi(a,c').\qedhere\]
    \end{proof}
    
    \begin{definition}
        We say that an $\mathrm{L}$-formula $D(x)$ is \textit{(uniformly) stably embedded} for $T$ if $D(x)$ satisfies one (equivalently any) of the conditions in Remark \ref{RmqUstMonsterModel} above.
    \end{definition}
    
     \begin{definition}\label{DefUniDefTyp}
        Let $S$ be a subset of an $\mathrm{L}$-structure $\mathcal{M}$. We say that the family $(\tp(a/S) )_{ a\in M^{\vert x \vert }}$ of all types over $S$ realised in $\mathcal{M}$  is \emph{uniformly definable} if for every formula $\phi(x,y)$, there is an $\mathrm{L}$-formula $\psi(y,z)$ such that for every tuple  $a\in M^{\vert x\vert}$, there is a tuple $b\in S^{\vert z\vert}$ such that for all $s\in S$: 
        \[\tp(a/S) \vdash \phi(x,s) \text{ if and only if } \mathcal{M} \models \psi(s,b).\] 
    \end{definition}
    We use again the notations of \cite{CD16}, but adapted to this uniform definition.
    
    \begin{notation}\label{NotationUniformeDK}
        Let $\mathcal{N}$ be an $\mathrm{L}$-structure and let $S$ be a subset. For $n\in \mathbb{N}$, we write $T_n^u(S,\mathcal{N})$\nomenclature[SE]{$T_n^u(S,\mathcal{M})$}{} if the family of types $(\tp^\mathcal{N}(a/S))_{a \in N^n}$ realised in $\mathcal{N}$ is uniformly definable. If $\mathcal{M}$ is an elementary substructure of $\mathcal{N}$, we might write $T_n^u(\mathcal{M},\mathcal{N})$ (both in curvy letters) instead of $T_n^u(M,\mathcal{N})$ to emphasise it. 
        We write $T_n^u(\mathcal{M})$ if all $n$-types over $\mathcal{M}$ (in any elementary extension) are uniformly
            definable.
    \end{notation}
    
    Similarly to the non-uniform case, one can also give a characterisation of uniform stable embeddedness in term of uniform definability of types: 
    \begin{fact}
    A set $S$ is uniformly stably embedded in a structure $\mathcal{M}$ if and only if $T_n^u(S,\mathcal{M})$ holds for all $n \in \mathbb{N}$. 
    \end{fact}
    \begin{example}
        In every benign theory of Henselian valued fields (see the list in the introduction), the residue field $k$ and the value group $\Gamma$ are stably embedded. This is a well known corollary of relative quantifier elimination in the language augmented by angular components (see Paragraph \ref{Preliminaries Benign theory of Henselian valued fields}). In fact, they are pure with control of parameters in the sense of \cite[Definition 1.8]{Tou20a}. 
    \end{example}
    
    We are interested in the following question: given a substructure $\mathcal{M}$  of an $\mathrm{L}$-structure $\mathcal{N}$, when is $M$ stably embedded (resp.\ uniformly stably embedded) in $\mathcal{N}$?
    The following (important) remark is immediately deduced from the stable embeddedness/ definability of types duality. It will be implicitly used in the remaining of this text. 
    
    \begin{remark}\label{RmkTeq} 
        Let $\mathcal{N}$ be an $\mathrm{L}$-structure, and $M$ a subset of $N$ and $S$ an interpretable sort of $\mathcal{N}$. We denote by $S(M)$ the image of $M$ under the projection of $N$ to $S$. 
        \begin{itemize}
            \item If $M \subseteq^{st} \mathcal{N}$ (resp.\ $M \subseteq^{ust} \mathcal{N}$) holds, then $S(M) \subseteq^{st} \mathcal{N}^{eq}$ (resp.\ $S(M) \subseteq^{ust} \mathcal{N}^{eq}$) holds.
            \item If $\mathcal{M}$ is an elementary submodel of $\mathcal{N}$, then 
            $\mathcal{M} \subseteq^{st} \mathcal{N}$ (resp.\ $\mathcal{M} \subseteq^{ust} \mathcal{N}$) holds if and only if $\mathcal{M}^{eq} \subseteq^{st} \mathcal{N}^{eq}$ (resp.\ $\mathcal{M}^{eq} \subseteq^{ust} \mathcal{N}^{eq}$) holds.
        \end{itemize}
    \end{remark}
    
    In other words, one can freely add some imaginary sorts to the language, or conversely remove them from the language. For instance, a theory of Henselian valued fields can either be described in the language $\mathrm{L}_{\RV}$ or the language $\mathrm{L}_{\Gamma,k}$ (see Paragraph \ref{Preliminaries Benign theory of Henselian valued fields}), and the question of stably embedded subsets will not be affected by this choice. For this reason, we will use different languages across the sections (notably in Paragraphs \ref{SectionRV} and \ref{SectionGammaK}).
    However, adding new structure might change the notion of stably embedded substructures. The following is clear:
    
    \begin{example}
        The set of rationals $\mathbb{Q}$ is uniformly stably embedded in $(\mathbb{Q}^{rc} ,0,1,+,\cdot,<)$, its real closure. However, it is not stably embedded in $(\mathbb{Q}^{rc},0,+,<)$ as an ordered abelian group (consider the cut in  $\sqrt{2}$). Of course it is uniformly stably embedded again in $\mathbb{Q}^{rc}$, as a pure set. 
    \end{example}
    
    In our study of Henselian valued fields, the question of stably embedded sub-valued fields can be asked in the language of valued fields enriched with angular components. We will be able to treat this question as well in Paragraph \ref{SubSecApplAC}. One has to notice also that the set of stably embedded subsets is in general not closed under definable  closure, as shown in the following example:
    \begin{example}
        The set of integers $\mathbb{Z}$ is uniformly stably embedded in the ordered abelian group $(\mathbb{R},0,+,<)$, but its definable closure $\mathbb{Q}$ is not.  
    \end{example}
       \paragraph{\textbf{More on definability of types}}\label{PreliminariesOnDefinableTypes}
       In order to apply Theorems \ref{TheoremReductionGammakNonElementaryPairs} and \ref{TheoremReductionGammakElementaryPairs}, one has to understand stably embedded substructures in ordered abelian groups and fields. We will focus in this text on basic examples, namely on o-minimal theories and on Presburger arithmetic. The reader will also find in Appendix \ref{OnPairsOfRandomGraphs} a similar discussion on the random graph where we also construct a uniform stably embedded pair and a non-uniform stably embedded pair of random graphs.

       Recall that $\mathrm{L}$ is any first order language. We saw that a substructure is stably embedded if and only if all realised types over this structure are definable. As we will see, in certain cases it is actually enough to show that $1$-types are definable. We will call this property the `\emph{Marker-Steinhorn criterion}', since it was first proved for elementary pairs of o-minimal structures by Marker and Steinhorn. This has been studied in the last two decades, and counterexamples to natural generalisations of the Marker-Steinhorn criterion have been found. We treat here only basic examples, but the reader will find a better overview in \cite{CD16} for instance. \\

       \textbf{O-minimal theories}
       
    \begin{fact}[Marker-Steinhorn (\cite{MS94})]\label{FactMS}
        Let $T$ be an $o$-minimal theory, and let $\mathcal{M}\preceq \mathcal{N}$ be two models. Then for all $n\in \mathbb{N}$, $T_1(\mathcal{M},\mathcal{N}) \Rightarrow T_n(\mathcal{M},\mathcal{N})$.
    \end{fact}

    From the proof of this fact follows a uniform version of it:  
    if $T$ is o-minimal, then 
    \[T_1^u(\mathcal{M},\mathcal{N}) \Rightarrow T_n^u(\mathcal{M},\mathcal{N}).\]
    One can deduce this from the non-uniform theorem by a general argument: let us add a predicate $P$ for the small model $\mathcal{M}$ to the language. Let $(\mathbb{N},\mathbb{M})$ be an $\vert \mathrm{L} \vert$-saturated elementary extension of $(\mathcal{N},\mathcal{M})$. As it is first order, we also have $T_1^u(\mathbb{M,\mathbb{N}})$ in the language $\mathrm{L}$ (however, it doesn't have to be true in the language $\mathrm{L}_P= \mathrm{L} \cup \{P\}$). In particular we have $T_1(\mathbb{M,\mathbb{N}})$ and then by Marker-Steinhorn, $T_n(\mathbb{M,\mathbb{N}})$ for all $n$. Using that $(\mathbb{N},\mathbb{M})$ is $\vert \mathrm{L}\vert $-saturated and following the proof of Remark \ref{RmqUstMonsterModel}, we get that $T_n^u(\mathbb{M,\mathbb{N}})$ holds for all $n$.  By elementarity, we have $T_n^u(\mathcal{M,\mathcal{N}})$ as wanted. As an immediate consequence, one deduces a previous result of Van den Dries: all types over an o-minimal expansion of $\mathbb{R}$ are definable, as the only possible cuts in $\mathbb{R}$ are of the form $a_-,a_+,+\infty$  or $-\infty$.

    \textbf{Presburger arithmetic}\\
    Let $T$ be the theory of $(\mathbb{Z},0,1,+,-,<,P_n)$ where $ P_n(a)$ holds if and only if $n$ divides $a$.
    \begin{remark}\label{RmkPresbArith}
        Let $\mathcal{M}$ be a model of $T$, and let $\bar{a}=a_0,\ldots, a_{k-1}$ be a finite tuple of elements in an elementary extension $\mathcal{N}$ of $\mathcal{M}$. Then, by quantifier elimination: 
        \[\tp(\bar{a}) \cup \bigcup_{z_0,\ldots, z_{k-1}\in \mathbb{Z}} \tp(\sum_{i<k}z_ia_{i}/M)  \vdash \tp(\bar{a}/M).\]
        It follows that for all $n \in \mathbb{N}$, \[T_1(\mathcal{M},\mathcal{N}) \Rightarrow T_n(\mathcal{M},\mathcal{N}),\]
        and
        \[T_1^u(\mathcal{M},\mathcal{N}) \Rightarrow T_n^u(\mathcal{M},\mathcal{N}).\]
    \end{remark}
    
    It is also clear that all types over $(\mathbb{Z},0,+,-,<)$ are uniformly definable. Indeed, any 1-type $\tp(a/\mathbb{Z})$ where $a$ is an element of an elementary extension $\mathcal{Z}$ of $\mathbb{Z}$, is determined by the class modulo $n$ of $a$ (for every $n$) and by whether $a<\mathbb{Z}$ or $a>\mathbb{Z}$. To summarise, one has $\mathbb{Z} \preceq^{ust} \mathcal{Z}$ for every elementary extension $\mathcal{Z}$ of $(\mathbb{Z},0,+,<)$.

    We conclude this small paragraph with a remark on ordered abelian groups:
    \begin{remark} \label{RemarkZROnlyGrupStablyEmbeddedInAnyElementaryExtension}
        The only ordered abelian groups $\mathcal{Z}$ which satisfy $T_n(\mathcal{Z})$ for every $n$ are the trivial group, $\mathbb{Z}$ and $\mathbb{R}$.
    \end{remark}
    \begin{proof}
    Consider a non-trivial ordered abelian group  $\mathcal{Z}$ which is stably embedded in all elementary extensions $\mathcal{Z}' \succeq \mathcal{Z}$. It needs in particular to be archimedian as otherwise an elementary extension will realise an irrational cut of the form $\mathcal{Z}_{<\omega\cdot a}/ \mathcal{Z}_{>\omega\cdot a} $ where 
    \[\mathcal{Z}_{<\omega\cdot a} = \{z\in\mathcal{Z} \ \vert \ \text{there is }n<\omega \text{ such that }z<n\cdot a\},\]
    and 
    \[\mathcal{Z}_{>\omega\cdot a} = \{z\in\mathcal{Z} \ \vert \ \text{for all }n<\omega,\ z>n\cdot a\}\]
    where $a$ is an element of $\mathcal{Z}$ such that the above sets are not empty.
    In other words, $\mathcal{Z}$ is isomorphic as an ordered abelian group to an additive subgroup of $ \mathbb{R}$. We assume that it is a subgroup of $\mathbb{R}$. If $\mathcal{Z}$ is discrete, it is isomorphic to $\mathbb{Z}$. Assume it is not discrete, and so dense in $\mathbb{R}$. If there is $a\in \mathbb{R} \setminus \mathcal{Z}$, this element realises an irrational cut over $\mathcal{Z}$, and thus there would be an elementary extension $\mathcal{Z}'$ realising this irrational cut, which is a contradiction. This shows that $\mathcal{Z}=\mathbb{R}$.
    \end{proof}

\subsection{Preliminaries on algebraic structures}\label{SectionModelTheoryAlgebraicStructures}
\subsubsection{Valued fields}


\textbf{Notation and languages}\\
A valued field will be typically denoted by $\mathcal{K}=(K,\Gamma,k,\val)$ where $K$ is the field (main sort), $\Gamma$ the value group and $k$ the residue field. The valuation is denoted by $\val$, the maximal ideal $\mathfrak{m}$ and the valuation ring $\mathcal{O}$.  Thanks to Remark \ref{RmkTeq}, we will be able to work indifferently in different (many-sorted) languages. In the next section, we will mainly work in the many-sorted languages $\mathrm{L}_{\RV}$ and $\mathrm{L}_{\RV_{<\omega}}$, which involves the leading term structures. For the definition of the leading term structures $\RV$ and $\RV_{<\omega}$, we refer the reader to \cite{Tou20a}. They can also use \cite{Fle11} as a reference.  We recall first two traditional languages of valued fields.\\
    
    \begin{itemize}
        \item $\mathrm{L}_{\text{div}}=\{K,0,1,+,\cdot,\mid\} $, where $\mid$ is a binary relation symbol, interpreted by the division: 
        \[\text{for }a,b \in K, a \mid b \text{ if and only if } \val(a) \leq \val(b).\]
        \item $\mathrm{L}_{\Gamma,k}= \{K,0,1,+,\cdot\} \cup\{k,0,1,+,\cdot\} \cup \{\Gamma,0,\infty,+,<\} \cup \{\val: K \rightarrow \Gamma, \Res: K^2 \rightarrow k\}$. \nomenclature[L]{$\mathrm{L}_{\Gamma,k}$}{}
    \end{itemize}
    where $\Res: K^2 \rightarrow k$ is the two-place residue map, interpreted as follows:
    \[ \Res(a,b) = 
        \begin{cases}
            \res(a/b) \text{ if } \val(a)\geq \val(b) \neq \infty,\\
            0 \text{ otherwise}.
        \end{cases}\]

   For $n \in \mathbb{N}^\star$, we denote by $\mathcal{O}_n:= \mathcal{O}/ \mathfrak{m}^n$ the valuation ring of order $n$. Recall that \textit{an angular component of order} $n$ is a homomorphism $\ac_n:  K^\star \rightarrow \mathcal{O}_n^\times$ such that for all $u\in \mathcal{O}^\times$, $\ac_n(u)=res_n(u):= u + \mathfrak{m}^n$. A system of angular component maps $(ac_n)_{n<\omega}$ is said to be \textit{compatible} if for all $n$, $\pi_n \circ \ac_{n+1}=\ac_n$\index{Angular component! of order $n$}\nomenclature[]{$\ac_n$}{Angular component of order $n$} where $\pi_n:\mathcal{O}_{n+1}\rightarrow \mathcal{O}_n$ is the natural projection. The convention is to contract $\ac_1$ to $\ac$.\\ 
    A section $s:\Gamma \rightarrow K^\star$ of the valuation gives immediately a compatible system of angular components (defined as $\ac_n:= a\in K^\star \mapsto \res_n(a/s(v(a)))$). As $\mathcal{O}^\times$ is a pure subgroup of $K^\star$, such a section exists when $\mathcal{K}$ is $\aleph_1$-saturated (see Fact \ref{FactSectionPureSubgroupAleph1Saturated}). As always, we assume that $\mathcal{K}$ is sufficiently saturated and we fix a compatible sequence $(\ac_n)_n$ of angular components.  
    We have the following diagram:  
    \[\xymatrix{
        1 \ar@{->}[r] & \mathcal{O}^\times \ar@{->}[r]\ar@{->}[d]_{\res_n}& K^\star \ar@{->}[r]_\val\ar@{->}[d]_{\rv_n}\ar@/_1.0pc/@{->}[dl]^{\ac_n} & \Gamma\ar@{->}[r] \ar@{=}[d]\ar@/_1.0pc/@{-->}[l]^s &  0 \\
        1 \ar@{->}[r]& \mathcal{O}^\times /(1+\mathfrak{m}^n) \simeq \mathcal{O}_n^\times\ar@{->}[r]  & \RV_n^{\star} \ar@{->}[r]^{\val_{\rv_n}} & \Gamma \ar@{->}[r]  & 0 }\]

    Any theory $T$ of Henselian valued fields in the language above admits a natural expansion -- denoted by $T_{\ac}$ -- in the language 
    \begin{itemize}
        \item $\mathrm{L}_{\Gamma,k,\ac}:=\mathrm{L}_{\Gamma,k} \cup \{\ac: K \rightarrow k\},$
    \end{itemize} 
    by adding the axiom saying that $\ac$ is an angular component.
    Similarly, we denote by $T_{ac_{<\omega}}$ the extension of $T$ to the language 
    \begin{itemize}
        \item $\mathrm{L}_{\Gamma,k,\ac_{<\omega}}:=\mathrm{L}_{\Gamma,k} \cup \{\mathcal{O}_n, \ac_n: K \rightarrow \mathcal{O}_n, \ n\in \mathbb{N}\},$
    \end{itemize}
    where $\ac_n$ are interpreted as compatible angular components of degree $n$.

    The \textit{leading term language} $\mathrm{L}_{\RV}$ is the multisorted language \begin{itemize}
        \item $\mathrm{L}_{\RV}:=\{K,0,1,+,\cdot\} \cup \{\RV,\mathbf{0},\textbf{1},\oplus ,\cdot\} \cup \{ \rv:K \rightarrow \RV\},$
    \end{itemize}
where $\RV$ will be interpreted as the first leading term structure, and $\rv: K \rightarrow \RV$ by the natural projection map. The symbol $\oplus$ is a ternary predicate, interpreted as follows:
\begin{align*}
 \oplus (\mathbf{x},\mathbf{y},\mathbf{z}) & \equiv & \exists a,b \in K \ \nomenclature[]{$\oplus(\mathbf{x},\mathbf{y},\mathbf{z})$}{} \rv(a)=\mathbf{x} \wedge \rv(b)=\mathbf{y} \wedge \rv(a+b)=\mathbf{z}
 \end{align*}

In the case of unramified mixed characteristic Henselian valued fields, we will have to consider the following language: \begin{itemize}
    \item $\mathrm{L}_{\RV_{<\omega}} :=  \{K,0,1,+,\cdot\} \cup \{\RV_{n},\textbf{0},\textbf{1},\cdot\}_{n\in \mathbb{N}} \cup \{\oplus_{l,m,n} \subset \RV_l\times \RV_m \times \RV_n\}_{n \leq l, m \in \mathbb{N}}  \cup\{\rv_{n}: \mathcal{K} \rightarrow \RV_{n}\}_{n\in \mathbb{N}} \cup \{\rv_{m \rightarrow k}: \RV_n \rightarrow \RV_m \}_{m \leq n}$
\end{itemize}
where $\RV_n$ is interpreted as the $n^{\text{th}}$ leading term structure, $\rv_n: K \rightarrow \RV_n$ by the natural projection map and $\oplus_{l,m,n}$ by the following ternary predicate:
\begin{align*}
 \oplus_{l,m,n} (\mathbf{x},\mathbf{y},\mathbf{z}) & \equiv & \exists a,b \in K \ \nomenclature[]{$\oplus_{\delta_1,\delta_2,\delta_3}(\mathbf{x},\mathbf{y},\mathbf{z})$}{} \rv_{l}(a)=\mathbf{x} \wedge \rv_{m}(b)=\mathbf{y} \wedge \rv_{n}(a+b)=\mathbf{z}.
 \end{align*}

\paragraph{ \textbf{Benign theory of Henselian valued fields}}\label{Preliminaries Benign theory of Henselian valued fields}
    Later in this text, we prove transfer principles for some rather nice Henselian valued fields, that we called here `benign' (see Definition \ref{DefBenign} below). The goal of this paragraph is to discuss essential properties that these benign Henselian valued fields share. We will emphasise model theoretical properties, and briefly recall from which algebraic properties they can be deduced. The idea is to implicitly work axiomatically by listing required properties needed for proving \ref{TheoremReductionGammakElementaryPairs} and other results.   \\

    Let $T$ be a (possibly incomplete) theory of Henselian valued fields. We define some properties. Recall that an extension $\mathcal{K}'=(K',\RV',\Gamma',k')$ of $\mathcal{K}=(K,\RV,\Gamma,k)$ is said to be \textit{immediate} if $\Gamma'=\Gamma$ and $k'=k$. We denote the following hypothesis:
    \begin{align}
    \tag{Im} \begin{array}{c}
          \text{The set of models of $T$ is closed }
         \text{under maximal immediate extensions.}
    \end{array} \nomenclature[P]{(Im)}{}
    \end{align}
    
    It is easy to see that extensions preserving the $\RV$-sort are exactly immediate extensions, as the following commutative diagram:
    
    \[ \xymatrix{
    &&{\RV'}^{\star}\ar[dr]&&\\
1 \ar[r] & k^{\times}\ar[ur]\ar[dr] && \Gamma\ar[r] &0, \\
    &&\RV^{\star}\ar@{^{(}->}[uu]\ar[ur]&&} \]
     implies $\RV=\RV'$.\\
    
    If (Im) is satisfied, maximal immediate extensions are natural examples where one can apply the Ax-Kochen-Ershov principle. We name two such properties: 
    
    \begin{align}
    \tag*{$(\text{AKE})_{\Gamma,k}$} \text{ for } \mathcal{K},\mathcal{K}' \models T, \mathcal{K}\subseteq \mathcal{K}', \text{ we have } \ \mathcal{K} \preceq \mathcal{K}' \Leftrightarrow k \preceq k' \text{ and } \Gamma \preceq \Gamma'.
    \end{align} \nomenclature[P]{$(\text{AKE})_{\Gamma,k}$,$(\text{AKE})_{\RV}$}{}
    \begin{align} \tag*{$(\text{AKE})_{\RV}$}\text{ for } \mathcal{K},\mathcal{K}' \models T,  \mathcal{K}\subseteq \mathcal{K}', \text{ we have } \mathcal{K} \preceq\mathcal{K}' \Leftrightarrow \RV \preceq \RV'.
    \end{align}
    These properties $(\text{AKE})_{\RV}$ and $(\text{AKE})_{\Gamma,k}$ are two different points of view, and often come together. We also denote the following properties:
    \begin{align}
        \tag*{$(\text{SE})_{\Gamma,k}$}\text{$\Gamma$ and $k$ are pure, stably embedded and orthogonal.} 
   \end{align}
    \begin{align}
        \tag*{$(\text{SE})_{\RV}$} \text{ $\RV$ is pure and stably embedded.}
    \end{align}
    
    They are consequences of relative quantifier elimination:
    \begin{equation}
    \tag{$(\text{EQ})_{\Gamma,k,ac}$} \begin{array}{l}
        \text{ $T_{\ac}$ has quantifier elimination (resplendently)}  \\ 
          \text{relatively  to $\Gamma$ and $k$ in the language $\mathrm{L}_{\Gamma,k,\ac}$. }
          \end{array}
    \end{equation}
        \begin{equation}
    \tag{$(\text{EQ})_{\RV}$} \begin{array}{l}
        \text{ $T$ has quantifier elimination (resplendently)}  \\ 
          \text{relatively  to $\RV$ in the language $\mathrm{L}_{\RV}$. }
          \end{array}
    \end{equation}
     \nomenclature[P]{$(\text{EQ})_{\Gamma,k,ac}$,$(\text{EQ})_{\RV}$,$(\text{SE})_{\Gamma,k}$,$(\text{SE})_{\RV}$}{}
    
    Notice that according to the terminology in \cite{Rid17}, $\{\Gamma\},\{k\}$ and $\{\RV\}$ are closed sets of sorts. Then resplendency automatically follows from relative quantifier elimination ( \cite[Proposition A.9]{Rid17}). Here is a well known fact followed by an easy observation.
    \begin{fact}
            $(\text{EQ})_{\Gamma,k,\ac}$ implies $(\text{AKE})_{\Gamma,k}$ and $(\text{SE})_{\Gamma,k}$.\\
            $(\text{EQ})_{\RV}$ implies $(\text{AKE})_{\RV}$ and $(\text{SE})_{\RV}$.
    \end{fact}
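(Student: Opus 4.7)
The plan is to exploit the same template for both implications: resplendent relative quantifier elimination lets one replace any $\mathrm{L}$-formula by one in which all quantifiers and remaining free variables are confined to the ``small'' sort(s)---namely $\Gamma \cup k$ for $(\text{EQ})_{\Gamma,k,\ac}$, and $\RV$ for $(\text{EQ})_{\RV}$---the main-sort parameters being absorbed into the coordinates of terms like $\val(\bar{t}(\bar{a}))$, $\ac(\bar{t}(\bar{a}))$, $\Res(\bar{t}(\bar{a}))$ (respectively $\rv(\bar{t}(\bar{a}))$). From this single reduction, both the Ax--Kochen--Ershov transfer and the package of properties bundled under $(\text{SE})$ fall out by essentially formal manipulations.

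For $(\text{AKE})_{\Gamma,k}$, given $\mathcal{K} \subseteq \mathcal{K}'$ models of $T$ with $k \preceq k'$ and $\Gamma \preceq \Gamma'$, I would first promote the inclusion to one of models of $T_{\ac}$: passing to $\aleph_1$-saturated extensions and applying the cited fact on sections yields a section of the valuation on the larger field whose induced angular components restrict to angular components on the smaller one. Given any $\mathrm{L}_{\Gamma,k,\ac}$-formula $\varphi(\bar{x})$ and $\bar{a} \in K$, $(\text{EQ})_{\Gamma,k,\ac}$ supplies an equivalent formula $\tilde{\varphi}\bigl(\val(\bar{t}(\bar{a})),\ac(\bar{t}(\bar{a})),\Res(\bar{t}(\bar{a}))\bigr)$ with quantifiers only in $\Gamma$ and $k$. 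The evaluated terms lie in $\Gamma \cup k$ and take identical values in $\mathcal{K}$ and $\mathcal{K}'$; since $k \preceq k'$ and $\Gamma \preceq \Gamma'$, the truth of $\tilde{\varphi}$ at these parameters is preserved, giving $\mathcal{K} \preceq \mathcal{K}'$.

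For $(\text{SE})_{\Gamma,k}$, consider a definable set $X \subseteq k^n$ given by $\varphi(\bar{x},\bar{b})$ with $\bar{b}$ arbitrary in a monster model. Applying resplendent quantifier elimination absorbs $\bar{b}$ into a tuple $\bar{c}=(\bar{c}_\Gamma,\bar{c}_k) \in \Gamma \cup k$, producing an equivalent $\mathrm{L}_{\Gamma,k,\ac}$-formula $\tilde{\varphi}(\bar{x},\bar{c})$ in $k$-variables with quantifiers only in $\Gamma$ and $k$. The key observation is that $\mathrm{L}_{\Gamma,k,\ac}$ contains no non-logical symbol directly connecting $\Gamma$ and $k$ (the maps $\val,\ac,\Res$ all require a $K$-input), so every atomic subformula of $\tilde{\varphi}$ is either a pure $\mathrm{L}_\Gamma$-formula in $\bar{c}_\Gamma$ or a pure $\mathrm{L}_k$-formula in $\bar{x},\bar{c}_k$; an easy induction on the quantifier structure (respecting the sort of the bound variable) shows $\tilde{\varphi}$ is equivalent to a Boolean combination of sentences $\psi_j(\bar{c}_\Gamma)$ and formulas $\chi_j(\bar{x},\bar{c}_k)$. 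Since the $\psi_j$ are mere truth values once $\bar{c}_\Gamma$ is fixed, $X$ is $\mathrm{L}_k$-definable over $\bar{c}_k \in k$, yielding both purity and stable embeddedness of $k$; the argument is symmetric in $\Gamma$, and applied to $Y \subseteq \Gamma^n \times k^m$ it yields orthogonality. The $\RV$-variant is a strict simplification: $\RV$ is a single sort so the orthogonality clause is vacuous, $\rv$ is primitive so no ac-map maneuvering is required, and the rest transfers verbatim. The only mild obstacle in the whole argument is the saturation dance securing compatible angular components in the $\mathrm{L}_{\Gamma,k,\ac}$ case, which is entirely routine.
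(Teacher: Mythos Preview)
The paper states this as a ``well known fact'' and offers no proof, so there is no paper-side argument to compare with; your overall template (relative QE collapses everything to the small sorts, from which both AKE and SE drop out) is exactly the standard route, and your treatment of $(\text{SE})_{\Gamma,k}$, orthogonality, and the entire $\RV$ line is correct.

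There is one genuine gap, however, in your derivation of $(\text{AKE})_{\Gamma,k}$ from $(\text{EQ})_{\Gamma,k,\ac}$. You write that saturation ``yields a section of the valuation on the larger field whose induced angular components restrict to angular components on the smaller one.'' This is backwards and, as stated, false: if $s':\Gamma'\to {K'}^{*}$ is a section on $\mathcal{K}'$, there is no reason for $s'(\gamma)$ to land in $K$ when $\gamma\in\Gamma$, so the induced $\ac'$ need not take $K^{*}$ into $k^{*}$. What one must do instead is start on the \emph{smaller} field and extend upward. Concretely: pass to a simultaneous ultrapower so that both $\mathcal{K}$ and $\mathcal{K}'$ are $\aleph_1$-saturated; pick an $\ac$-map $\ac_K:K^{*}\to k^{*}$ on $\mathcal{K}$ (this is where saturation of $\mathcal{K}$ is used); observe that $\ac_K$ and $\res':\mathcal{O}_{K'}^{\times}\to {k'}^{*}$ agree on $\mathcal{O}_K^{\times}$ and hence glue to a homomorphism $\ac_0:\mathcal{O}_{K'}^{\times}\cdot K^{*}\to {k'}^{*}$; finally extend $\ac_0$ to all of ${K'}^{*}$. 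The last step is not automatic: it works because $\Gamma\preceq\Gamma'$ forces $\mathcal{O}_{K'}^{\times}\cdot K^{*}$ to be a \emph{pure} subgroup of ${K'}^{*}$ (if $c^n\in \mathcal{O}_{K'}^{\times}K^{*}$ then $n\val(c)\in\Gamma$, hence $\val(c)\in\Gamma$ by purity and torsion-freeness, so $c\in \mathcal{O}_{K'}^{\times}K^{*}$), and because ${k'}^{*}$, being $\aleph_1$-saturated, is pure-injective, so homomorphisms from pure subgroups extend. With compatible $\ac$-maps in hand, your remaining argument goes through verbatim. This is all routine for an expert, but it is more than ``applying the cited fact on sections,'' and the direction you indicated does not work.
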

    
    \begin{observation}
        \begin{enumerate}
            \item $(\text{AKE})_{\Gamma,k}$ implies $(\text{AKE})_{\RV}$.
            \item $(\text{EQ})_{\RV}$ implies $(\text{EQ})_{\Gamma,k,ac}$.
        \end{enumerate}
    \end{observation}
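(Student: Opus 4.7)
For both items, the essential input is the short exact sequence
\[ 1 \to k^\times \to \RV^\star \to \Gamma \to 0 \]
displayed earlier: it shows that $k$ and $\Gamma$ are always $\emptyset$-interpretable in the $\RV$-sort, and, conversely, that a choice of angular component yields a definable splitting turning $\RV$ into (the product of) $k$ and $\Gamma$.

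For (1), the forward direction $\mathcal{K} \preceq \mathcal{K}' \Rightarrow \RV \preceq \RV'$ is immediate from the fact that $\RV$ is a $\emptyset$-interpretable reduct of $\mathcal{K}$. For the reverse, I would observe that $k^\times$ is the $\emptyset$-definable subset $\val_{\rv}^{-1}(0)$ of $\RV^\star$, so $k$ (with its full ring structure, $0$ adjoined and multiplication inherited from $\RV$, addition read off from the restriction of $\oplus$) is $\emptyset$-interpretable in $\RV$; likewise $\Gamma$ is the $\emptyset$-interpretable quotient $\RV^\star/k^\times$ with the induced ordered group structure. Hence $\RV \preceq \RV'$ forces both $k \preceq k'$ and $\Gamma \preceq \Gamma'$, and then $(\text{AKE})_{\Gamma,k}$ yields $\mathcal{K} \preceq \mathcal{K}'$.

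For (2), the main point is that an angular component can be absorbed into the $\RV$-structure. Given $\ac : K^\star \to k^\times$, one checks that $\ac(a)$ depends only on $\rv(a)$ (because $\rv(a) = \rv(b)$ forces $a/b \in 1+\mathfrak{m}$, hence $\ac(a/b) = 1$), so $\ac$ factors through a group homomorphism $\bar{\ac} : \RV^\star \to k^\times$, which is a section of the inclusion $k^\times \hookrightarrow \RV^\star$ and therefore splits the short exact sequence above. Consequently the map $\RV^\star \to k^\times \times \Gamma$, $x \mapsto (\bar{\ac}(x), \val_{\rv}(x))$, is an $\mathrm{L}_{\RV,\bar{\ac}}$-definable bijection whose inverse $(u,\gamma) \mapsto u \cdot s(\gamma)$ is also definable, and the predicate $\oplus$ translates into a formula using only $(+,<)$ on $\Gamma$ and $(+,\cdot)$ on $k$.

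The plan is then: since $\bar{\ac}$ is a pure expansion of the $\RV$-sort, resplendency of $(\text{EQ})_{\RV}$ gives relative quantifier elimination of $T_{\ac}$ down to $\RV$ in $\mathrm{L}_{\RV,\bar{\ac}}$; composing with the $\mathrm{L}_{\RV,\bar{\ac}}$-definable bi-interpretation between $\RV$ and $k^\times \times \Gamma$ replaces every $\RV$-quantifier by quantifiers over $k$ and $\Gamma$, which exactly yields relative quantifier elimination of $T_{\ac}$ to $\Gamma$ and $k$ in $\mathrm{L}_{\Gamma,k,\ac}$. Resplendency of the resulting statement follows because any expansion of the $k$- and $\Gamma$-structures transports through the bi-interpretation to a pure expansion of the $\RV$-structure, to which resplendency of $(\text{EQ})_{\RV}$ applies again. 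The only point deserving some care is checking that the translation of $\oplus$ is indeed quantifier-free in $\mathrm{L}_{\Gamma,k,\ac}$, which is a routine case distinction on the valuations of the two summands.
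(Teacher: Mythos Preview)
Your argument is correct and, for part~(1), essentially identical to the paper's: the paper simply notes that $k$ and $\Gamma$ are interpretable in $\RV$, so $\RV\preceq\RV'$ forces $k\preceq k'$ and $\Gamma\preceq\Gamma'$, after which $(\text{AKE})_{\Gamma,k}$ applies. Your version spells out the interpretations explicitly but follows the same line.

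For part~(2) the paper gives no proof at all---it just refers the reader to \cite{Tou20a}---so your argument is necessarily more detailed than what appears here. That said, your strategy (viewing $\ac$ as an $\RV$-enrichment via the factorisation $\ac=\ac_{\rv}\circ\rv$, invoking resplendency of $(\text{EQ})_{\RV}$, and then using the splitting $\RV^\star\simeq k^\times\times\Gamma$ induced by $\ac_{\rv}$ to trade $\RV$-quantifiers for $k$- and $\Gamma$-quantifiers) is exactly the mechanism the paper itself exploits later in Subsection~\ref{SubSecApplAC}, where the same diagram and the same product decomposition appear. So your approach is not merely correct but is the intended one. One minor remark: your closing worry about $\oplus$ translating to something quantifier-free is stronger than what is needed---relative quantifier elimination to $\Gamma$ and $k$ permits $\Gamma$- and $k$-sorted quantifiers in the target formula---though your case distinction does indeed show the translation is quantifier-free.
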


    \begin{proof}
        (1) This is immediate, as the value group and residue field are interpretable in the $\RV$-structure: for any models $\mathcal{M},\mathcal{N} \models T$, $ \RV_M \preceq \RV_N$ implies that $ k_M \preceq k_N \text{ and } \Gamma_M \preceq \Gamma_N$. \\
        (2) See for example \cite{Tou20a}.
    \end{proof}

    We gather under the name of \emph{benign} some nice theories of Henselian valued fields which satisfy these properties.
    \begin{definition}
        A valued field of equicharacteristic $p>0$ is said Kaplansky if the value group is $p$-divisible, the residue field is perfect and does not admit any finite separable extensions of degree divisible by $p$.
    \end{definition}

    \begin{definition}\label{DefBenign} \index{Valued field! benign Henselian}
        Any $\{\Gamma\}$-$\{k\}$-enrichment of one of the following theories of Henselian valued fields is called \emph{benign}:

    \begin{enumerate}
        \item Henselian valued fields of characteristic $(0,0)$,
        \item algebraically closed valued fields,
        \item algebraically maximal Kaplansky Henselian valued fields.
    \end{enumerate}
        A model of a benign theory will be called a \emph{benign Henselian valued field}.
    \end{definition}

    \begin{fact}\label{FactBenignTheories}
    Benign theories satisfy $(\text{Im})$ and $(\text{EQ})_{\RV}$. 
    \end{fact}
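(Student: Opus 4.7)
The plan is to verify $(\text{Im})$ and $(\text{EQ})_{\RV}$ for each of the three base theories listed in Definition \ref{DefBenign}, and then to argue that both properties transfer automatically to $\{\Gamma\}$-$\{k\}$-enrichments by resplendency. Since the statement is essentially a summary of known material, the heart of the argument is to identify the right references rather than to produce genuinely new proofs.

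For $(\text{Im})$, I would proceed case by case. Every Henselian valued field admits a maximal immediate extension by a standard Zorn argument on the partial order of immediate extensions inside a sufficiently saturated universe, and a maximal immediate extension is itself Henselian. In the equicharacteristic $(0,0)$ case, Henselianity is all that is required. For algebraically closed valued fields, the maximal immediate extension inherits a divisible value group and algebraically closed residue field and is Henselian, and a Henselian valued field with these residue data is algebraically closed. For algebraically maximal Kaplansky Henselian valued fields, the Kaplansky condition depends only on $\Gamma$ and $k$ and is therefore preserved under immediate extensions; Henselianity is guaranteed by maximality; and algebraic maximality is immediate, since a maximal immediate extension has no proper immediate extension at all, and in particular no proper immediate algebraic one.

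For $(\text{EQ})_{\RV}$, each case is a standard relative quantifier elimination in the leading-term language, and all three are collected in \cite{Tou20a}. In equicharacteristic $(0,0)$ this is the $\RV$-formulation, after Flenner, of the Basarab--Kuhlmann--Pas theorem; for the algebraically closed and for the algebraically maximal Kaplansky cases, one runs a back-and-forth in $\mathrm{L}_{\RV}$ in which $(\text{Im})$ (together with the Kaplansky uniqueness of maximal immediate extensions in the third case) is used to lift any partial $\mathrm{L}_{\RV}$-isomorphism between small substructures to an isomorphism of the ambient models. The passage to $\{\Gamma\}$-$\{k\}$-enrichments is then automatic: $\{\RV\}$ is a closed set of sorts in the sense of \cite{Rid17}, so \cite[Proposition A.9]{Rid17} promotes the relative quantifier elimination to its resplendent, enriched version; and $(\text{Im})$ is insensitive to additional structure on $\Gamma$ and $k$, since an immediate extension leaves the interpretation of these sorts unchanged, so any $\Gamma$- and $k$-enrichment automatically extends.

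The only step which is really non-trivial is relative quantifier elimination in positive residue characteristic for Kaplansky fields; all other parts reduce to citing well-established results, and the purpose of the fact is simply to package them in a form convenient for the subsequent proofs.
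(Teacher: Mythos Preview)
Your proposal is correct and matches the paper's own treatment almost exactly: the paper also treats this as a packaging of known results, saying that $(\text{Im})$ is ``clear'' and citing \cite{Fle11}, \cite{HKR18}, and \cite{HH19} for $(\text{EQ})_{\RV}$, with the passage to $\{\Gamma\}$-$\{k\}$-enrichments handled via \cite[Proposition A.9]{Rid17} just as you do. The only difference is bibliographic---the paper points to the primary sources rather than to \cite{Tou20a}---and that you spell out the $(\text{Im})$ verification case by case, whereas the paper leaves it as evident.
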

    By the discussion above, they also satisfy $(\text{EQ})_{\Gamma,k,ac}$, $(\text{AKE})_{\Gamma,k}$, $(\text{AKE})_{\RV}$,$(\text{SE})_{\Gamma,k}$ and $(\text{SE})_{\RV}$.
    
    It is clear that the set of models of a benign theory is closed under maximal immediate extensions. The reader can refer to \cite{Fle11},\cite{HKR18} and \cite{HH19} for proofs of relative quantifier elimination. Notice that it also holds for any $\{\Gamma\}$-$\{k\}$-enrichment, as it is a particular case of $\{\RV\}$-enrichment, and as the sorts $\Gamma, k$ and $\RV$ are closed (and by \cite[Proposition A.9]{Rid17}).

We will complete our study with some transfer principle for unramified mixed characteristic Henselian valued fields with perfect residue field. As it requires further techniques, it needs to be treated independently.

\paragraph{\textbf{Unramified mixed characteristic Henselian valued fields}}
\label{Preliminaries Unramified mixed characteristic Henselian valued fields}

    We give a short overview on unramified valued fields, by presenting the similarities with benign valued fields.
    The (partial) theory of Henselian valued fields of unramified mixed characteristic does not satisfy either $(\text{EQ})_{\Gamma,k,ac}$ or $(\text{EQ})_{\RV}$. We indeed need to get `information' modulo $\mathfrak{m}^{n}$ in a quantifier-free way. 
    
    Let us cite a result in \cite{Fle11}:
    \begin{fact}\cite[Proposition 4.3]{Fle11}\label{FactRelQERV}
        Let $T$ be the theory of unramified mixed characteristic Henselian valued fields in the language $\mathrm{L}_{\RV_{<\omega}}$. Then $T$ has quantifier elimination (resplendently) relatively to $\{\RV_n\}_{n\in \mathbb{N}}$. 
    \end{fact}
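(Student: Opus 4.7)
I would prove this relative quantifier elimination by a standard back-and-forth argument, adapted to the mixed characteristic setting where the tower of sorts $\{\RV_n\}_{n \in \mathbb{N}}$ replaces the single $\RV$-sort used in the benign case. Fix $\aleph_1$-saturated models $\mathcal{K}, \mathcal{K}' \models T$ and a partial $\mathrm{L}_{\RV_{<\omega}}$-isomorphism $\phi : A \to A'$ between small $\mathrm{L}_{\RV_{<\omega}}$-substructures whose restriction to each $\RV_n(A)$ is elementary into $\RV_n(\mathcal{K}')$. The goal is to extend $\phi$ to include any prescribed new element on either side. Extensions adding a new $\RV_n$-element follow immediately from the elementarity hypothesis and saturation; after closing $A$ under the field operations, the projections $\rv_n$, and the transition maps between the $\RV$-sorts, the substantive task is to extend $\phi$ by a new element $a \in K$.

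Let $F$ denote the $K$-reduct of $A$. I would split into two subcases. If $a$ is algebraic over $F$ with minimal polynomial $p(x) \in F[x]$, use saturation of $\mathcal{K}'$ to find $a' \in K'$ with matching $\RV_n$-data for $p(a)$, $p'(a)$, and suitable auxiliary polynomial expressions in $a$. Hensel's lemma in $\mathcal{K}'$, together with the unramified assumption (so that $v(p_{\mathrm{res}})$ is the minimal positive element of $\Gamma$ and $\RV_n$-data corresponds precisely to reduction modulo $p_{\mathrm{res}}^n$) and perfectness of the residue field (to rule out inseparable obstructions in the reduction), then upgrades this approximate match to full agreement of types. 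If $a$ is transcendental over $F$, I would analyse its pseudo-Cauchy behaviour over $F$: either $a$ is a pseudo-limit of a pseudo-Cauchy sequence from $F$, in which case the pseudo-limit type is determined by the $\phi$-image of that sequence and realised in $\mathcal{K}'$ by saturation, or no such sequence exists and the $\RV_n$-data of $\{f(a) : f \in A[x], n \in \mathbb{N}\}$, transported by $\phi$, yields a consistent type realised by some $a' \in K'$.

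The main obstacle, and what makes this theorem genuinely harder than its equicharacteristic $0$ or Kaplansky analogues, lies in the algebraic subcase. In those simpler settings one can split $\RV$ cleanly as $k^\times$ and $\Gamma$ via a section and then apply Hensel's lemma with $k$-theoretic and $\Gamma$-theoretic data independently. In unramified mixed characteristic, the short exact sequences $1 \to (1 + \mathfrak{m}^n)/(1+\mathfrak{m}^{n+1}) \to \mathcal{O}_{n+1}^\times \to \mathcal{O}_n^\times \to 1$ are non-trivial extensions interlacing $p$-adic and residue-theoretic information; this is precisely what the tower $\{\RV_n\}_n$ and the transition maps $\rv_{n \to m}$ are designed to encode. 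Making the back-and-forth actually go through requires showing that this tower of data suffices to determine the type of polynomial evaluations $f(a)$, which calls for a delicate control of Newton polygons modulo $p^n$. I would carry this out using a Swiss-cheese decomposition of $\RV_n$-definable subsets of $K$ together with Kaplansky-style results on pseudo-completeness, following the approach in \cite{Fle11}. Resplendency is then automatic from relative quantifier elimination since, as noted after the $(\text{EQ})_{\RV}$ discussion, each $\RV_n$ is a closed sort in the sense of \cite[Proposition A.9]{Rid17}.
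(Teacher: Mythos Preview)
The paper does not prove this statement: it is stated as a \emph{Fact} with a citation to \cite[Proposition 4.3]{Fle11}, and the paper further remarks that the result goes back to \cite{Bas91} and in fact holds for any Henselian valued field of characteristic~$0$. The only addendum the paper supplies is the observation you also make at the end, namely that resplendency is automatic because $\{\RV_n\}_{n\in\mathbb{N}}$ is a closed set of sorts and one may invoke \cite[Proposition A.9]{Rid17}. There is therefore no proof in the paper to compare your proposal against.

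On the substance of your sketch: the back-and-forth strategy with a split into algebraic and transcendental extensions, governed by pseudo-Cauchy behaviour and Hensel-type lifting, is indeed the shape of the argument in \cite{Fle11}. Two small points. First, you invoke perfectness of the residue field in the algebraic subcase, but the Fact as stated (and as proved in \cite{Fle11}) does \emph{not} assume the residue field is perfect; that hypothesis enters only later in the paper, for B\'elair's result (Fact~\ref{FactBelair}) and the identification $\mathcal{O}_n \simeq W_n(k)$. Second, your proposal is really a plan that defers the delicate step (control of polynomial evaluations via the $\RV_n$-tower) back to \cite{Fle11}; since the paper itself treats the result as a black box, that is entirely in keeping with how it is used here.
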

    
    Resplendency also comes from the fact that set of sorts $\{\RV_{n}\}$ is closed. Notice that it holds in fact for any Henselian valued field of characteristic 0. This result was already proved in \cite{Bas91}. Again, an important consequence is that the multisorted substructure  $\left\{(\RV_{n})_{n<\omega},(\oplus_{n,m,l})_{n<l,m}, (\rv_{n\rightarrow m})_{m<n<\omega} \right\}$ is stably embedded and pure with control of parameters.

    Now, let us discuss more specifically the case of perfect residue field. We denote now by $T$ the theory of unramified mixed characteristic Henselian valued fields with perfect residue field. 
    The following proposition is well known and has been used for example in \cite[Corollary 5.2]{Bel99}. It states how the structure $\RV_n$ and the truncated Witt vectors $W_n$ are related.
    \begin{proposition}[{\cite[Proposition 1.69]{Tou20a}}]\label{KerValRV_n}
    Assume the residue field $k$ to be perfect.
         \begin{enumerate}
             \item The residue ring $\mathcal{O}_n$ of order $n$  is isomorphic to $W_n(k)$, the ring of truncated Witt vectors of length $n$ .
             \item The kernel of the valuation $\val:  \RV_n^\star \rightarrow \Gamma$ is given by $\mathcal{O}^\times/(1+\mathfrak{m}^n) \simeq (\mathcal{O}/\mathfrak{m}^n)^\times$. It is isomorphic to $W_n(k)^\times$, the set of invertible elements of $W_n(k)$.
         \end{enumerate}

    \end{proposition}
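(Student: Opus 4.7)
The proof naturally splits along the two parts. My plan is first to establish (1) by identifying $\mathcal{O}_n$ as the unique strict $p$-ring of length $n$ with residue field $k$, namely $W_n(k)$. Then (2) will follow from (1) by a standard short exact sequence computation on units.

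For (1), I would begin by using unramifiedness to identify $\mathfrak{m}^n = p^n\mathcal{O}$, so that $\mathcal{O}_n = \mathcal{O}/p^n\mathcal{O}$. Then I would verify the defining properties of a strict $p$-ring of length $n$: namely $p^n = 0$ in $\mathcal{O}_n$ while $p^{n-1} \neq 0$; the residue ring $\mathcal{O}_n/p\mathcal{O}_n \cong k$ is perfect by hypothesis; and multiplication by $p$ is injective on each $\mathcal{O}/p^i\mathcal{O}$ for $i \leq n$, which follows from the fact that $p$ is a nonzerodivisor in $\mathcal{O}$. The classical theorem (see, e.g., Serre's \emph{Local Fields}, Ch.~II, \S5) then asserts that such a ring is uniquely isomorphic to $W_n(k)$. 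Alternatively, one can produce the isomorphism directly via Teichmüller representatives mod $p^n$: for each $a \in k$, set $[a] := \tilde{b}^{p^{n-1}} \bmod p^n$, where $b = a^{p^{-(n-1)}}$ (well defined since $k$ is perfect) and $\tilde{b} \in \mathcal{O}$ is any lift of $b$. A direct computation with the binomial theorem shows $(\tilde{b} + pc)^{p^{n-1}} \equiv \tilde{b}^{p^{n-1}} \pmod{p^n}$, so $[a]$ is independent of the chosen lift, and $a \mapsto [a]$ is multiplicative. Every element of $\mathcal{O}_n$ then has a unique expansion $\sum_{i=0}^{n-1} [a_i]\, p^i$ with $a_i \in k$, and matching this expansion with the Witt polynomial identities yields the ring isomorphism with $W_n(k)$.

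For (2), by definition $\RV_n^\star = K^\star/(1+\mathfrak{m}^n)$, and the valuation on $K^\star$ is trivial on $1+\mathfrak{m}^n$, hence descends to $\val_{\rv_n} \colon \RV_n^\star \to \Gamma$ with kernel precisely the classes represented by units of $\mathcal{O}$, namely $\mathcal{O}^\times/(1+\mathfrak{m}^n)$. Next consider the reduction $\mathcal{O}^\times \to \mathcal{O}_n^\times$: it is surjective because any lift to $\mathcal{O}$ of a unit mod $\mathfrak{m}^n$ has nonzero residue in $k$ and hence is a unit in $\mathcal{O}$, and its kernel is plainly $1+\mathfrak{m}^n$. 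Therefore $\mathcal{O}^\times/(1+\mathfrak{m}^n) \simeq \mathcal{O}_n^\times$, and combining with (1) gives the isomorphism with $W_n(k)^\times$.

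The main obstacle is part (1). The subtlety in the Henselian (rather than complete) setting is that global Teichmüller lifts need not exist in $\mathcal{O}$, only modulo each $p^n$. The sketch above sidesteps this by observing that $\tilde{b}^{p^{n-1}} \bmod p^n$ depends only on $b$, an estimate that relies on $p^2 \mid \binom{p}{j} p^j$ for $1 \leq j \leq p$ together with perfectness of $k$ to iterate the $p$-th power map the required number of times. Once this well-definedness is in hand, the remainder is a routine verification via the Witt addition and multiplication polynomials.
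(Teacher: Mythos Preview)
The paper does not supply its own proof of this proposition; it is imported from \cite{Tou20a} and described as well known (with a pointer to B\'elair). So there is nothing to compare against, and your write-up is essentially the standard argument, which is correct.

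One slip to fix: ``multiplication by $p$ is injective on each $\mathcal{O}/p^i\mathcal{O}$'' is false as stated, since multiplication by $p$ on $\mathcal{O}/p^i\mathcal{O}$ kills $p^{i-1}\mathcal{O}/p^i\mathcal{O}$. What the strict $p$-ring characterisation actually requires, and what you use, is that $p$ be a non-zero-divisor in $\mathcal{O}$ itself, equivalently that multiplication by $p$ induces isomorphisms $\mathcal{O}/p^{i-1}\mathcal{O}\xrightarrow{\ \sim\ } p\mathcal{O}/p^{i}\mathcal{O}$ for each $i\le n$; this holds because $\mathcal{O}$ has characteristic $0$. With that correction the appeal to Serre goes through, and your alternative direct construction of Teichm\"uller representatives modulo $p^n$ is also valid and correctly circumvents the Henselian-versus-complete issue: the estimate $x\equiv y\pmod{p}\Rightarrow x^{p^{n-1}}\equiv y^{p^{n-1}}\pmod{p^n}$ needs only that $p$ is not a zero-divisor in $\mathcal{O}$, and perfectness of $k$ lets you take the required $p$-power roots in the residue field. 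Part~(2) is routine exactly as you say.
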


    \begin{fact}[Bélair \cite{Bel99}]\label{FactBelair}
     The theory $T_{ac_{<\omega}}$ of Henselian mixed characteristic valued fields with perfect residue field and with angular components eliminates field-sorted quantifiers in the language $\mathrm{L}_{\Gamma,k,\ac_{<\omega}}$\footnote{The Ax-Kochen-Ershov property and relative quantifier elimination for Henselian unramified mixed characteristic valued fields (with possibly imperfect residue field) has been proved in \cite{AJ19}.}.
    \end{fact}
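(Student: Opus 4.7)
The plan is to derive the theorem as a direct consequence of the relative quantifier elimination in the leading-term language (Fact \ref{FactRelQERV}), by showing that when the residue field is perfect, all the sorts $\RV_n$ and their structure are quantifier-free interpretable inside the $\mathrm{L}_{\Gamma,k,\ac_{<\omega}}$-reduct on $\Gamma$ and $k$.

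First, I would invoke Proposition \ref{KerValRV_n}: since $k$ is perfect, $\mathcal{O}_n$ is canonically isomorphic to the ring $W_n(k)$ of truncated Witt vectors of length $n$. The Witt construction is given by universal polynomial identities in the coordinates, so $W_n(k)$ together with its ring operations, and in particular its group of units $W_n(k)^\times$, is quantifier-free interpretable in $k$ in the language of rings. The compatible system $(\ac_n)_n$ then splits the short exact sequence
\[ 1 \to \mathcal{O}_n^\times \to \RV_n^\star \xrightarrow{\val_{\rv_n}} \Gamma \to 0, \]
so that $\RV_n^\star$ identifies with $W_n(k)^\times \times \Gamma$ via $\rv_n(a) \mapsto (\ac_n(a), \val(a))$, the multiplication on $\RV_n^\star$ becoming componentwise.

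Second, I would check that the rest of the $\mathrm{L}_{\RV_{<\omega}}$-structure on the sorts $\RV_n$ transfers to a quantifier-free interpretation in $\mathrm{L}_{\Gamma,k,\ac_{<\omega}}$. The transition maps $\rv_{n\to m}$ descend to Witt-vector truncation on the first coordinate and the identity on $\Gamma$. The addition predicate $\oplus_{l,m,n}$ is handled by a case split on the value-group comparison of its arguments: when $\val(\mathbf{x}) \neq \val(\mathbf{y})$ the smaller valuation is kept and the Witt component of the sum equals the appropriate truncation of the dominant term; when $\val(\mathbf{x}) = \val(\mathbf{y})$, one reduces by a common section value and applies addition inside $W_n(k)$, obtaining both the valuation and the unit component of the result (or recognising it as $\mathbf{0}$). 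Each sub-case is expressible quantifier-freely using $\val$, $\ac_n$, and the polynomial Witt-vector operations on $k$.

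Third, given an $\mathrm{L}_{\Gamma,k,\ac_{<\omega}}$-formula $\varphi(\bar{x}, \bar{\xi}, \bar{\gamma})$, I would view it as an $\mathrm{L}_{\RV_{<\omega}}$-formula, using that $\Gamma$ and $k$ are reducts of the $\RV_n$'s and that the $\ac_n$ are encoded by $\rv_n$ together with a section of $\val$. Fact \ref{FactRelQERV}, applied resplendently to the $\{\Gamma\}$-$\{k\}$-enrichment at hand, rewrites $\varphi$ as a Boolean combination of $\{\RV_n\}_n$-formulas and atomic field-sorted formulas of the shape $\rv_n(t(\bar{x}))$ for polynomial terms $t$. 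Substituting the quantifier-free interpretation of the $\RV_n$-structure constructed in the previous step then eliminates every field-sorted quantifier. The main obstacle will be verifying carefully that $\oplus_{l,m,n}$ is genuinely quantifier-free in $\mathrm{L}_{\Gamma,k,\ac_{<\omega}}$: this is precisely where perfectness of $k$ is used, since without it $\mathcal{O}_n$ need not be described polynomially from $k$ alone, and the universal Witt polynomials would not suffice to express the addition. Once this translation is set up uniformly in the valuations, the elimination of field-sorted quantifiers is immediate.
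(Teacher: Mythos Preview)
The paper does not give its own proof of this statement: it is recorded as a fact with a citation to B\'elair's original article, whose argument predates Flenner's and proceeds directly rather than via the $\RV_{<\omega}$-language. Your proposal to derive the result from Fact~\ref{FactRelQERV} combined with the Witt-vector identification of Proposition~\ref{KerValRV_n} is therefore a genuinely different route, and the outline is sound. Once the compatible system $(\ac_n)_n$ splits each sequence $1 \to W_n(k)^\times \to \RV_n^\star \to \Gamma \to 0$, an $\RV_n$-sorted quantifier becomes a pair of quantifiers over $W_n(k)^\times$ and $\Gamma$, neither of which is field-sorted, and the universal Witt polynomials push the remaining structure down to $k$. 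Two small points of precision: first, the enrichment to which you apply Fact~\ref{FactRelQERV} resplendently is an $\{\RV_{<\omega}\}$-enrichment (the maps $\ac_n$ factor through $\rv_n$, cf.\ the analogous remark in Subsection~\ref{SubSecApplAC}), not merely a $\{\Gamma\}$-$\{k\}$-enrichment; second, you do not actually need $\oplus_{l,m,n}$ to be \emph{quantifier-free} in $\mathrm{L}_{\Gamma,k,\ac_{<\omega}}$, only that its definition avoid field-sorted quantifiers, and your case analysis already delivers this. The payoff of your approach is that it isolates perfectness of $k$ as precisely the hypothesis making $\mathcal{O}_n$ interpretable in $k$; B\'elair's direct argument is logically independent of Flenner's theorem.
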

    Notice that in \cite{Bel99}, Bélair doesn't assume that the residue field $k$ is perfect, but it is indeed necessary in order to identify the ring $\mathcal{O}_n:=\mathcal{O}/\mathfrak{m}^n$ with the truncated Witt vectors $W_n(k)$.
    This implies as well that the residue field $k$ and the value group $\Gamma$ are pure sorts, and are orthogonal. This can be seen by analysing field-sorted-quantifier-free formulas, and by noticing that $\mathcal{O}_n \simeq W_n(k)$ is interpretable in $k$ (see e.g. \cite[Corollary 1.64]{Tou20a}).

    By analogy with the previous paragraph, we name the following properties: 
    \begin{align}
    \tag*{$(\text{EQ})_{\Gamma,k,ac_{<\omega}}$} \text{ $T_{ac_{<\omega}}$ eliminates $K$-sorted quantifiers in the language $\mathrm{L}_{\ac_{<\omega}}$}.\\
    \tag*{$(\text{EQ})_{\RV_{<\omega}}$} \text{ $T$ has quantifier elimination (resplendently) relatively to $\RV_{<\omega}$}.
    \end{align}
    \nomenclature[P]{$(\text{EQ})_{\Gamma,k,ac_{<\omega}}$,$(\text{EQ})_{\RV_{<\omega}}$}{}
    Again, notice that $\RV_{<\omega}=\bigcup_{n<\omega}\RV_n$ is a closed set of sorts. As before, we get:
   \begin{fact}
            $(\text{EQ})_{\RV_{<\omega}}$ implies $(\text{AKE})_{\RV_{<\omega}}$.
    \end{fact}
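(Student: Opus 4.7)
The plan is to mimic verbatim the standard derivation of an Ax--Kochen--Ershov transfer from a relative quantifier elimination, exactly as in the companion statement $(\text{EQ})_{\RV} \Rightarrow (\text{AKE})_{\RV}$ invoked earlier in the paper. Let $\mathcal{K} \subseteq \mathcal{K}'$ be two models of $T$. The forward direction of $(\text{AKE})_{\RV_{<\omega}}$ is immediate: if $\mathcal{K} \preceq \mathcal{K}'$ then, since each $\RV_n$ together with the predicates $\oplus_{l,m,n}$ and the maps $\rv_{m \to n}$ is (quantifier-free) definable on top of the valued field, the restriction of the elementarity to the $\RV_{<\omega}$-reducts yields $\RV_{<\omega}(\mathcal{K}) \preceq \RV_{<\omega}(\mathcal{K}')$.

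For the converse, assume $\RV_{<\omega}(\mathcal{K}) \preceq \RV_{<\omega}(\mathcal{K}')$, fix an $\mathrm{L}_{\RV_{<\omega}}$-formula $\varphi(\bar{x})$, and fix a tuple $\bar{a}$ of parameters from $\mathcal{K}$ (of mixed sorts). By $(\text{EQ})_{\RV_{<\omega}}$, $\varphi$ is equivalent modulo $T$ to a formula $\psi(\bar{x})$ all of whose quantifiers range only over the $\RV_n$ sorts. Unwinding the definition of relative quantifier elimination, $\psi$ can be displayed as a Boolean combination of two kinds of sub-formulas: atomic main-sort formulas (polynomial equalities and inequalities in the main-sort components of $\bar a$), and $\mathrm{L}_{\RV_{<\omega}}$-formulas whose parameters are either $\RV$-sort components of $\bar a$ or expressions of the form $\rv_n(t(\bar a))$ for a ring term $t$. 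Sub-formulas of the first kind are preserved because $\mathcal{K}$ is an $\mathrm{L}_{\text{ring}}$-substructure of $\mathcal{K}'$; sub-formulas of the second kind are preserved because all their parameters lie in $\RV_{<\omega}(\mathcal{K})$, which is elementary in $\RV_{<\omega}(\mathcal{K}')$ by hypothesis. Therefore $\mathcal{K} \models \varphi(\bar{a}) \iff \mathcal{K}' \models \varphi(\bar{a})$, and the Tarski--Vaught test delivers $\mathcal{K} \preceq \mathcal{K}'$.

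The main (and essentially only) point that requires care is checking that the output of the relative quantifier elimination admits the clean parsing described above into an atomic main-sort part and an $\RV_{<\omega}$-part with parameters already in $\RV_{<\omega}(\mathcal{K})$; this is however nothing more than the content of the definition of quantifier elimination relatively to a closed set of sorts, exactly as used in $(\text{EQ})_{\RV} \Rightarrow (\text{AKE})_{\RV}$ and in $(\text{EQ})_{\Gamma,k,\ac} \Rightarrow (\text{AKE})_{\Gamma,k}$. No new technique beyond the single-$\RV$ case is required; the argument is identical once the system $(\RV_n)_{n<\omega}$ replaces $\RV$ as the distinguished closed family of sorts.
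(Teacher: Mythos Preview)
Your argument is correct and is precisely the standard derivation the paper has in mind: the paper gives no proof at all, merely writing ``As before, we get'' in reference to the earlier (also unproved, ``well known'') fact that $(\text{EQ})_{\RV}$ implies $(\text{AKE})_{\RV}$. Your explicit unfolding of that argument---relative QE yields a formula whose only quantifiers are over $\RV_{<\omega}$, atomic main-sort parts are preserved by the substructure inclusion, and the $\RV_{<\omega}$-parts are preserved by the assumed elementarity---is exactly the intended content of that phrase.
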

    
    We also need to adapt the axiom $(\text{Im})$, as it is probably safer to look for a stronger property. Indeed, one can ask the following:
  \begin{question}\label{QImmediate}
        When do we have that, for every $n$, $\RV_n=\RV_n'$ for all immediate extensions $\mathcal{K}'/\mathcal{K}$?
    \end{question}
    
     In general, an immediate extension of a mixed characteristic field can have a larger $\RV_n$-sort. For instance, let us look at the field of rational functions $K=\mathbb{Q}(X)$ and the field of formal power series $K'=\mathbb{Q}((X))$. 
    We consider the valuation ring $\mathcal{O}'$ on $K'$ defined by
    \[\mathcal{O}':=\{\sum_{i\geq n}a_iX^i \ \vert n\in \mathbb{Z}, \ \val_p(a_i) \geq 0 \text{ for all $i$ and } \val_p{a_i}=0 \text{ only if $i\geq 0$}\}.\]
     The value group can be identified with $(\mathbb{Z}\times \mathbb{Z}, (0,0),+ , <_{lex})$ endowed with the lexicographic order $<_{lex}$ and where $\val(p)=(0,1)> \val(X)=(0,1)$. We consider the restriction of the valuation to $K$.  Then the extension $K'/K$ is immediate. One sees that $\RV_{(1,0)}(K')$ is uncountable (isomorphic as abelian group to $\mathbb{F}_p((X))\times \Gamma$), and that $\RV_{(1,0)}(K)$ is countable (isomorphic as an abelian group to $\mathbb{F}_p(X)\times \Gamma$). 

    An extension $\mathcal{K}' /\mathcal{K}$ satisfying $\RV_{<\omega}' =\RV_{<\omega}$ will be called $\RV_{<\omega}$\textit{-immediate}. Let us give a name to the condition saying that the previous question has a positive answer:
    \begin{align*}
    \tag*{$(\text{Im})_{\RV_{<\omega}}$}
    \begin{array}{l}
         \text{The set of models of $T$ is closed under maximal} \\
           \text{ immediate extensions and immediate extensions of}\\
        \text{ models of $T$ are $\RV_{<\omega}$-immediate}.
    \end{array}
    \end{align*} \nomenclature[P]{$(\text{Im})_\RV_{<\omega}$}{}
    This is satisfied by unramified mixed characteristic valued fields with perfect residue field. Indeed, the same argument as before proves that $\RV_n=\RV_n'$ for all $n \in \mathbb{N}$, as one has the commutative diagram: 
 
     \[ \xymatrix{
    &&{\RV_n'}^\star\ar[dr]&&\\
1 \ar[r] & W_n(k)^{\times}\ar[ur]\ar[dr] && \Gamma\ar[r] &0, \\
    &&\RV_n^\star\ar@{^{(}->}[uu]\ar[ur]&&} \]
    
    where $W_n(k)$ is the ring of Witt vectors of order $n$ over $k$.

    To sum up, we have:

     \begin{fact}\label{FactUnramifiedMixedCharacteristicValuedfields}
    The theory of unramified mixed characteristic Henselian valued fields with perfect residue field satisfies $(\text{Im})_{\RV_{<\omega}}$, $(\text{EQ})_{\RV_{<\omega}}$, $(\text{EQ})_{\Gamma,k,ac_{<\omega}}$, 
    $(\text{SE})_{\Gamma,k}$ and $(\text{SE})_{\RV_{<\omega}}$. 
    \end{fact}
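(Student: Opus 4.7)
The plan is to verify each of the five properties separately, each time by invoking the appropriate result recalled earlier in the section, and to organise them in the order $(\text{Im})_{\RV_{<\omega}}$, $(\text{EQ})_{\RV_{<\omega}}$, $(\text{EQ})_{\Gamma,k,ac_{<\omega}}$, $(\text{SE})_{\RV_{<\omega}}$, $(\text{SE})_{\Gamma,k}$, so that the two stable embeddedness statements can be deduced at the end from the two quantifier elimination statements via the resplendency argument.

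For $(\text{Im})_{\RV_{<\omega}}$, I would first recall that the class of unramified mixed characteristic Henselian valued fields with perfect residue field is closed under maximal immediate extensions: immediate extensions preserve the residue field, hence its perfection, preserve unramifiedness (since $\val(p)$ is the minimal positive element of $\Gamma = \Gamma'$), and preserve Henselianity by the standard construction of henselisations inside a maximal immediate extension. For the $\RV_{<\omega}$-immediacy assertion, I would use Proposition \ref{KerValRV_n}: on both $\mathcal{K}$ and $\mathcal{K}'$, the kernel of $\val : \RV_n^{\star} \to \Gamma$ is identified with $W_n(k)^{\times}$, where $k = k'$ is perfect. The five lemma applied to the commutative diagram already drawn in the paragraph above the fact then forces $\RV_n \hookrightarrow \RV_n'$ to be surjective for every $n$, which gives $\RV_{<\omega} = \RV_{<\omega}'$.

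For $(\text{EQ})_{\RV_{<\omega}}$, I would simply invoke Fact \ref{FactRelQERV} (Flenner), which applies to any Henselian valued field of characteristic $0$ and hence in particular to $T$. For $(\text{EQ})_{\Gamma,k,ac_{<\omega}}$, I would cite Fact \ref{FactBelair} (Bélair), noting once more that the hypothesis that $k$ is perfect is exactly what allows the identification $\mathcal{O}_n \simeq W_n(k)$ used there.

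Finally, for the two stable embeddedness properties, the key observation is that $\{\RV_n\}_{n<\omega}$ and $\{\Gamma, k\}$ are each a closed set of sorts in Rideau-Kikuchi's sense, so that \cite[Proposition A.9]{Rid17} upgrades the two relative quantifier eliminations to resplendent ones, which in turn yield stable embeddedness and purity of the corresponding sorts. For $(\text{SE})_{\Gamma,k}$ I would additionally need orthogonality of $\Gamma$ and $k$; this is read off from the shape of field-sorted-quantifier-free $\mathrm{L}_{\Gamma,k,ac_{<\omega}}$-formulas given by $(\text{EQ})_{\Gamma,k,ac_{<\omega}}$, using that each $\ac_n$ takes values in $\mathcal{O}_n$, which is interpretable in $k$ via Witt vectors. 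The main thing to be careful about here is that stable embeddedness of $\Gamma, k$ refers to their structure in the original language $\mathrm{L}_{\Gamma,k}$ (without the $\ac_n$'s); this is not an obstacle since the $\ac_n$ are only used to eliminate field-sorted quantifiers, and every $\mathrm{L}_{\Gamma,k}$-definable set in $\Gamma^n \times k^m$ remains $\Gamma, k$-definable once the angular components are integrated out after saturating enough to find a section of the valuation (as recalled before the diagram in the notation paragraph).
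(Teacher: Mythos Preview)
Your proposal is correct and follows essentially the same approach as the paper: the fact is stated there as a summary (``To sum up, we have:'') of the preceding discussion, and you have simply unpacked that discussion property by property, citing Fact~\ref{FactRelQERV} for $(\text{EQ})_{\RV_{<\omega}}$, Fact~\ref{FactBelair} for $(\text{EQ})_{\Gamma,k,ac_{<\omega}}$, the commutative diagram with $W_n(k)^\times$ for $(\text{Im})_{\RV_{<\omega}}$, and the closedness of the sorts together with resplendency for the two stable embeddedness clauses. The only place where your write-up is slightly looser than it should be is the final sentence about ``integrating out'' the angular components: the cleaner way to say this is that any $\mathrm{L}_{\Gamma,k}$-formula with free variables in $\Gamma\cup k$ is, modulo $T_{\ac_{<\omega}}$, equivalent to a field-quantifier-free formula which then lives entirely in the $\Gamma,k,\mathcal{O}_n$-sorts and hence (via $\mathcal{O}_n\simeq W_n(k)$) in $\Gamma\cup k$, and since every sufficiently saturated model of $T$ expands to a model of $T_{\ac_{<\omega}}$ this equivalence already holds modulo $T$.
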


\subsubsection{Pure short exact sequences of abelian groups}\label{SubsectionPreliminariesAbelianGroups}
We conclude these preliminaries with some facts on abelian groups. They will be use for proving a reduction principle in short exact sequence of abelian group, and after enrichment, for proving our reduction principle in Henselian valued fields.

    \begin{definition}
        Let $B$ a group and $A$ a subgroup. We say that $A$ is a \emph{pure subgroup} \index{Pure!subgroup} of $B$ if for all $a$ in $A$, $n\in \mathbb{N}$, $a$ is $n$-divisible in $B$ if and only if $a$ is $n$-divisible in $A$.
    \end{definition}
    We recall the following fundamental fact:
    \begin{fact}\label{FactSectionPureSubgroupAleph1Saturated}
        Let $\mathcal{M}$ be an $\aleph_1$-saturated structure, and let $A,B$ be two definable abelian groups, and assume that $A$ is a pure subgroup of $B$. Then the exact sequence of abelian groups $0\rightarrow A \rightarrow B \rightarrow B/A \rightarrow 0$
        splits: there is a group homomorphism $\alpha:B \rightarrow A$ such that $\restriction{\alpha}{A}$ is the identity on $A$. In such case, $B$ is isomorphic as a group to $A \times B/A$.
    \end{fact}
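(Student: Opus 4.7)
The plan is to deduce the statement from two well-known ingredients in the model theory of modules: that $\aleph_1$-saturated modules are pure-injective (algebraically compact), and that a pure embedding into a pure-injective module splits.

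First, I would check that the definition of pure subgroup given in the text coincides with the model-theoretic notion of pure embedding of $\mathbb{Z}$-modules, i.e., that the inclusion $A \hookrightarrow B$ reflects all positive primitive formulas. This is standard for abelian groups, since every pp-formula in one free variable is equivalent in $\mathrm{Th}(B)$ to a finite conjunction of divisibility conditions of the form $n \mid v - a$, and the hypothesis of the fact is exactly the preservation of such conditions.

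Next I would establish that $A$ is pure-injective. The characterization I would use is: a $\mathbb{Z}$-module is pure-injective if and only if every pp-$1$-type $p(v)$ with parameters in $A$ that is finitely satisfied in $A$ is realized in $A$. To prove this under $\aleph_1$-saturation of $\mathcal{M}$, one proceeds as follows. Given such a $p(v)$, note that the lattice of pp-formulas over abelian groups has a tame structure (each pp-formula in $v$ over $A$ is equivalent to a conjunction of conditions $n_i \mid v - a_i$ for $a_i \in A$, $n_i \in \mathbb{N}$), so one may reduce finite satisfiability to the realization of countably many such divisibility conditions at a time. Since each of these uses only countably many parameters from $A$, $\aleph_1$-saturation of $\mathcal{M}$ gives a realization in $A$.

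Once $A$ is pure-injective, the splitting is immediate: pure-injectivity applied to the identity homomorphism $A \to A$ and the pure embedding $A \hookrightarrow B$ produces a group homomorphism $\alpha : B \to A$ extending the identity on $A$. Then $\ker(\alpha)$ is a subgroup of $B$ with $A \cap \ker(\alpha) = 0$ and $A + \ker(\alpha) = B$, and the projection $\pi : B \to B/A$ identifies $\ker(\alpha)$ with $B/A$. Hence $B \cong A \times B/A$ as groups.

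The main obstacle is the second step: pure-injectivity requires realizing a pp-type whose parameter set can a priori be of arbitrary cardinality, so $\aleph_1$-saturation is not by itself obviously enough. The technical heart is therefore the reduction, specific to $\mathbb{Z}$-modules, of an arbitrary pp-type to a countable amount of essential data (or, equivalently, a Zorn's-lemma argument extending a retraction one element at a time, each extension requiring only realization of a single $1$-type in countably many parameters via $\aleph_1$-saturation).
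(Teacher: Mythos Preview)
Your approach is correct and matches the paper's: the paper does not give a proof but simply states that the fact is an immediate corollary of pure-injectivity, citing \cite[Theorem 20 p.171]{Che76}. Your sketch expands exactly this route---showing that $\aleph_1$-saturation of a $\mathbb{Z}$-module yields pure-injectivity (with the Zorn-type reduction to one-variable pp-types handling the parameter-cardinality issue you flag), and then using pure-injectivity to split the pure embedding.
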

    More precisely, it is an immediate corollary of a more general statement on \emph{pure-injectivity}. See \cite[Theorem 20 p.171]{Che76}.
    
    Assume that we have a pure short exact sequence of abelian groups
    \[ \xymatrix{0 \ar[r] & A \ar[r]^{\iota}& B \ar[r]^{\nu} & C\ar[r] &0}. \]
    (meaning that $\iota(A)$ is a pure subgroup of $B$)\index{Pure! short exact sequence of abelian groups}. We treat it as a three-sorted structure $(A,B,C,\iota,\nu)$, with a group structure for all sorts. In fact, in our main applications, we will consider such a sequence with more structure on $A$ and $C$. Let us explicitly state all results resplendently, by working in an enriched language. So, let $\mathcal{M}=(A,B,C,\iota,\nu,\ldots)$ be an $\{A\}$-enrichment of a $\{C\}$-enrichment  (for short: an $\{A\}$-$\{C\}$-enrichment) of the exact sequence in a language that we will denote by $\mathrm{L}$, and we denote its theory by $T$. We will always assume that $\mathcal{M}$ is sufficiently saturated ($\aleph_1$ saturated will be enough).\\
    Hypothesis of purity implies the exactness of the following sequences for $n\in \mathbb{N}$:
    \[\xymatrix{0 \ar[r] & A/nA \ar[r]^{\iota_n}& B/nB \ar[r]^{\nu_n} & C/nC\ar[r] &0}.\]
    One has indeed that 
    \[\frac{A+nB}{nB} \simeq \frac{A}{A\cap nB} = \frac{A}{nA}.\]
    
    We consider for $n\geq 0$ the following maps: \\
    \begin{itemize}
        \item the natural projections $\pi_n:A \rightarrow A/nA$,
        \item the map  \[\begin{array}{ccccc}
\rho_n  : & B & \to & A/nA \\
  & b & \mapsto & 
        \begin{cases}
            0_{A/nA} \ \text{ if }b\notin \nu^{-1}(nC) \\
            \iota_n^{-1}(b+nB) \text{ otherwise, }
        \end{cases} 
        \end{array}\]
        \end{itemize}

    where $0_{A/nA}$ is the zero element of $A/nA$ (often denoted by $0$).
    Then let us consider the language 
    \[\mathrm{L}_q= \mathrm{L} \cup \{A/nA,\pi_n,\rho_n\}_{n\geq 0},\] \nomenclature[L]{$\mathrm{L}_q$}{}
    and let $T_q$ be the natural extension of the theory $T$. 
    By $<A>$, we denote the set of sorts containing $A$, $A/nA$ and the new sorts possibly coming from the $A$-enrichment. Similarly, let $<C>$ be the set of sorts containing $C$ and the new sorts possibly coming from the $C$-enrichment. By $A$-sort and $C$-sort, we will abusively refer to $<A>$ and $<C>$ respectively, and similarly for $A$-formulas and $C$-formulas.   Aschenbrenner, Chernikov, Gehret and Ziegler prove the following result:
    
    \begin{fact}[{\cite[Theorem 4.2]{ACGZ20}}]\label{FactACGZ}
        The theory $T_q$ (resplendently) eliminates $B$-sorted quantifiers.
        

    More precisely, all $\mathrm{L}_q$-formulas $\phi(x)$ with a tuple of variables  $x\in B^{\vert x\vert}$ are equivalent to boolean combinations of formulas of the form:
    \begin{enumerate}
        \item $\phi_C(\nu(t_0(x)),\ldots,\nu(t_{s-1}(x)))$ where $t_i(x)$'s are terms in the group language, and $\phi_C$ is a $C$-formula,
        \item $\phi_{A}(\rho_{n_0}(t_0(x)),\ldots,\rho_{n_{s-1}}(t_{s-1}(x)))$ where the $t_i(x)$'s are terms in the group language, where \newline $s,n_0,n_1,\ldots,n_{s-1}\in \mathbb{N}$, and where $\phi_{A}$ is an $A$-formula.
    \end{enumerate}
    In particular there is no occurrence of the symbol $\iota$.
    
    \end{fact}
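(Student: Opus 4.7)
The plan is to establish the (resplendent) elimination of $B$-sorted quantifiers by a back-and-forth argument in sufficiently saturated models. Let $\mathcal{M}, \mathcal{N} \models T_q$ be $\kappa$-saturated for a suitably large $\kappa$, and consider a partial $\mathrm{L}_q$-embedding $f: \mathcal{M}_0 \to \mathcal{N}$ defined on a small $\mathrm{L}_q$-substructure. It suffices to show that if $f$ preserves all formulas of the form (1) and (2) with parameters from $\mathcal{M}_0$, then $f$ extends to an elementary embedding. By a standard syntactic translation, this yields that the $\mathrm{L}_q$-type of a $B$-tuple is determined by the formulas of the form (1) and (2) it satisfies, which is the desired elimination.

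The core of the argument is the one-element extension step: given $b \in B_\mathcal{M} \setminus B_{\mathcal{M}_0}$, produce $b^\ast \in B_\mathcal{N}$ such that $f \cup \{b \mapsto b^\ast\}$ is still a partial $\mathrm{L}_q$-embedding preserving formulas of type (1) and (2). First, using saturation of $\mathcal{N}$ and the surjectivity of $\nu$, pick $b_0 \in B_\mathcal{N}$ such that $\nu(b_0)$ realises over $\nu(f(B_{\mathcal{M}_0}))$ the $C$-sort type of $\nu(b)$ over $\nu(B_{\mathcal{M}_0})$. Since $\iota(A_\mathcal{N}) \subseteq \ker \nu$, adjusting $b_0$ by any $\iota(a)$ with $a \in A_\mathcal{N}$ preserves this $C$-sort data. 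The remaining task is to choose $a$ so that for every $\mathbb{Z}$-linear combination $t$ of $b$ and parameters $\bar{b}' \subseteq B_{\mathcal{M}_0}$, and every $n \in \mathbb{N}$ with $\nu(t) \in n C_\mathcal{M}$, the resulting $\rho_n$-value on the $\mathcal{N}$-side matches the correct coset dictated by $f(\rho_n(t))$, so that the $A$-sort types required by the form (2) formulas are realised.

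Each such requirement reduces to a congruence condition $a \equiv \delta_{t,n} \pmod{n A_\mathcal{N}}$ for an explicit $\delta_{t,n} \in A_\mathcal{N}/n A_\mathcal{N}$; together with the $A$-sort type that $a$ needs to satisfy over $f(A_{\mathcal{M}_0})$, this is a partial type in $A_\mathcal{N}$ whose consistency encodes the coherence of the data coming from $b \in B_\mathcal{M}$. The main obstacle is realising this partial type simultaneously across all $n$. This is handled by combining $\aleph_1$-saturation of $\mathcal{N}$ with the purity of $A_\mathcal{N}$ in $B_\mathcal{N}$, which by Fact \ref{FactSectionPureSubgroupAleph1Saturated} makes $A_\mathcal{N}$ pure-injective; in particular every finitely satisfiable system of congruences and $A$-sort conditions can be realised. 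Finally, resplendency is automatic: $\{A\}$ and $\{C\}$ are closed sets of sorts in the sense of Rideau-Kikuchi, so by \cite[Proposition A.9]{Rid17} the elimination propagates to all $\{A\}$-$\{C\}$-enrichments, since the maps $\nu$, $\iota$ and $\rho_n$ are intrinsic to the short exact sequence and are unaffected by enrichments of $A$ or $C$ alone.
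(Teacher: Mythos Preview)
The paper does not prove this statement: it is recorded as a Fact with a citation to \cite[Theorem 4.2]{ACGZ20} and used as a black box, so there is no in-paper argument to compare against. Your sketch therefore has to be judged on its own.

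The back-and-forth strategy is the right one, and the reduction to ``correct $b_0$ by some $\iota(a)$'' is sound in outline, but the sketch skips the one step that carries all the content. After fixing $b_0$, a term $t = k b + m\cdot\bar b'$ with $\nu(t)\in nC_{\mathcal M}$ imposes the condition $\pi_n(ka) = (\text{target}) - \rho_n(k b_0 + m\cdot f(\bar b'))$, i.e.\ a congruence on $ka$, not on $a$ as you wrote. You then need that the full collection of form-(2) conditions on $b^\ast$ translates into a \emph{finitely satisfiable} partial $A$-type in the variable $a$ over the parameters $\rho_n(k b_0 + m\cdot f(\bar b'))$ and $f(\mathcal M_0)$. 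You assert this (``the coherence of the data coming from $b\in B_{\mathcal M}$''), but establishing it is exactly where purity of $\iota(A)$ in $B$ and the compatibilities among the $\rho_n$ are actually used; it is the substance of the proof, not a remark. Without it, nothing prevents the various $\delta_{t,n}$ (which depend on your particular choice of $b_0$) from being jointly unrealisable.

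Two smaller points. First, your appeal to Fact~\ref{FactSectionPureSubgroupAleph1Saturated} is off target: that fact gives a \emph{splitting} of the sequence in an $\aleph_1$-saturated model, not pure-injectivity of $A_{\mathcal N}$; algebraic compactness of $A_{\mathcal N}$ does follow from $\aleph_1$-saturation, but via a different classical result, and in any case it only helps once finite satisfiability is in hand. In fact the splitting itself suggests a cleaner line: in saturated models write $B\cong A\times C$ non-definably and check that the $(\nu,\rho_n)$-data recovers the type of the $A$- and $C$-components. Second, the resplendency remark at the end is fine.
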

    
    In particular, notice that the formula $t(x)=0$ is equivalent to $\nu(t(x))=0 \wedge \rho_0(t(x))=0$ and $\exists y \ ny=t(x)$ is equivalent to $\exists y_C \ ny_C=\nu(t(x)) \wedge \rho_n(t(x))=0$.
    
    We have:
	    \begin{corollary}\label{CoroPureOrth}	
	        In the theory $T_q$, $<A>$ and $<C>$ are stably embedded, pure with control of parameters and orthogonal to each other.
	    \end{corollary}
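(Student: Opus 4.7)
The plan is to derive all three properties — stable embeddedness, purity with control of parameters, and orthogonality — directly from the resplendent relative quantifier elimination of Fact \ref{FactACGZ}. The crucial observation is the final sentence of that fact: no occurrence of $\iota$ remains in the normal form. Since $<A>$-sorted variables can be coerced into $B$ only through $\iota$ (no other function in the language or in an $<A>$-$<C>$-enrichment goes from $<A>$ to $B$), any $B$-sorted group term appearing in the normal form of a formula $\phi(x,\bar m)$ with $x\in <A>$ depends only on the $B$-components of $\bar m$, not on $x$ itself.

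For stable embeddedness and purity with control of parameters of $<A>$, I will take a formula $\phi(x,\bar m)$ with $x$ an $<A>$-tuple of variables and split $\bar m=(\bar m_A,\bar m_B,\bar m_C)$ by sort, and apply Fact \ref{FactACGZ} resplendently in the expansion naming $\bar m_A$ and $\bar m_C$ as $<A>$- and $<C>$-constants. By the observation above, the $B$-sorted terms $t_i$ in the resulting normal form are built only from $\bar m_B$. The type~(1) clauses $\phi_C(\nu(t_i(\bar m_B)))$ are then $<C>$-sentences over $<C>$-parameters, hence Boolean constants independent of $x$, while the type~(2) clauses $\phi_A(x,\rho_{n_i}(t_i(\bar m_B)))$ become $<A>$-formulas in $x$ whose parameters $\rho_{n_i}(t_i(\bar m_B))$ lie in $A/n_iA$. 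Thus $\phi(x,\bar m)$ is equivalent to an $<A>$-formula with $<A>$-parameters, as required. The symmetric argument, with the roles of type~(1) and type~(2) swapped, handles $<C>$.

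For orthogonality, I will apply the same normal form to a formula $\phi(x,z,\bar m)$ with $x\in <A>$ and $z\in <C>$. Now the type~(1) clauses are $<C>$-formulas in $z$ over $<C>$, the type~(2) clauses are $<A>$-formulas in $x$ over $<A>$, and no single clause entangles $x$ with $z$. Hence $\phi$ is a Boolean combination of an $<A>$-formula in $x$ and a $<C>$-formula in $z$, which is exactly the desired orthogonality.

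The only real subtlety, which I would verify carefully, is that resplendency of Fact \ref{FactACGZ} genuinely allows us to absorb the $<A>$- and $<C>$-components of the parameters of $\phi$ as language constants so that the normal form cleanly splits along the $<A>/<C>$ divide. This is granted by closedness of the sort-sets $<A>$ and $<C>$ via \cite[Proposition A.9]{Rid17}, as already invoked elsewhere in the paper; once this step is cleared, everything else is a direct read-off of the normal form.
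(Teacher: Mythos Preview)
Your argument is correct and is exactly the standard derivation from the resplendent relative quantifier elimination that the paper defers to \cite{Tou20a}; the paper itself later invokes precisely this normal form in the proof of Proposition~\ref{ProAbeGrp}. One small technical fix: Fact~\ref{FactACGZ} states the explicit $\iota$-free normal form only for formulas whose free variables are all $B$-sorted, so in your resplendency step you must also name the $<A>$-variable $x$ itself (and, for orthogonality, the $<C>$-variable $z$) as constants in the enrichment, not just the parameter components $\bar m_A,\bar m_C$; once this is done the only remaining free variables are the $B$-components $\bar m_B$, the normal form applies verbatim, and the named constants $x$ reappear only inside the $<A>$-clauses $\phi_A$ exactly as you want.
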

	The reader can refer to \cite{Tou20a} for definitions and a proof of this corollary.

\section{Stably embedded sub-valued fields}\label{SectionStablyEmbeddedPairs}
    We prove our transfer principle for benign valued fields (Theorem \ref{TheoremReductionGammakNonElementaryPairs} and Theorem \ref{TheoremReductionGammakElementaryPairs}) in Subsection \ref{SectionReductionStableEmbeddednessBenignValuedFields}.
   Our transfer principle for unramified mixed characteristic Henselian valued fields with perfect residue field (Theorem \ref{TheoremReductionGammakElementaryPairsMixeCharacteristic}) is proved in Subsection \ref{SectionReductionStableEmbeddednessUnramifiedMixedCharacteristic}, using some additional arguments.
   Finally, in Subsection \ref{SectionPairs}, we discuss the elementarity (in the language of pairs) of the class of elementary pairs $\mathcal{K} \prec \mathcal{L}$ 
    where all types over $\mathcal{K}$ realised in $\mathcal{L}$ are definable (equivalently where $\mathcal{K}$ is stably embedded in $\mathcal{L}$).

    \subsection{Benign Henselian valued fields}\label{SectionReductionStableEmbeddednessBenignValuedFields}
        
    Consider a benign theory $T$ of Henselian valued fields. We want to discuss when a valued field $\mathcal{K}$ is stably embedded (resp.\ uniformly stably embedded) in an extension $\mathcal{L}$ which is a model of $T$. We need first to define the following:
    \begin{definition}\label{DefinitionSeparated}
    An extension of valued fields $\mathcal{L}/ \mathcal{K}$ is said separated\index{Separatedness of valued fields extension} if for any finite-dimensional $K$-vector subspace $V$ of $L$, there is a $K$-basis $\{c_0, \ldots,c_{n-1}\}$ of $V$ such that for any $( a_0, \ldots, a_{n-1})\in K^{n}$,
    \[v(\sum\limits_{i<n}a_ic_i) = \min\limits_{i<n}(v(a_ic_i)).\]  
    Equivalently, this means that for any $\lbrace a_0, \ldots, a_{n-1}\rbrace \in K^{n}$,
    \[\rv(\sum_{i<n} a_ic_i)= \bigoplus_{i<n} \rv(a_i)\rv(c_i).\]
    Such a basis $\{c_0,\ldots, c_{n-1}\}$ is called a \emph{separating basis of $V$ over $\mathcal{K}$}.
    \end{definition}
    As we will see, it is a necessary condition for an elementary extension $\mathcal{L}/\mathcal{K}$ to be separated in order to be stably embedded. This property has been intensively studied (see \cite{Del88} and \cite{Bau80} for more details).\\
    
    Let us state the theorems of Cubides-Delon and Cubides-Ye:
    \begin{theorem}[{\cite[Theorem 1.9]{CD16}}]\label{ThmCubidesDelon}
        Consider $\mathcal{K} \preceq \mathcal{L}$ be two algebraically closed valued fields. The following are equivalent:
            \begin{enumerate}
                \item $\mathcal{K}$ is stably embedded (resp.\ uniformly stably embedded) in $\mathcal{L}$,
                \item the extension $\mathcal{L}/\mathcal{K}$ is separated and $\Gamma_K$ is stably embedded (resp.\ uniformly stably embedded) in $\Gamma_L$.
            \end{enumerate}
    \end{theorem}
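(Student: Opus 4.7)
The plan is to prove both directions, with (2)$\Rightarrow$(1) carrying the main content. Throughout I work in the leading-term language $\mathrm{L}_{\RV}$, exploiting $(\text{EQ})_{\RV}$ for ACVF together with the decomposition provided by separatedness. For (1)$\Rightarrow$(2), stable embedding of $\Gamma_K$ in $\Gamma_L$ is inherited from stable embedding of $\mathcal{K}$ in $\mathcal{L}$ via $(\text{SE})_{\Gamma,k}$: any $\mathcal{L}$-definable subset of $\Gamma_K^n$ is $\mathcal{K}$-definable by hypothesis, and is therefore $\Gamma_K$-definable since $\Gamma$ is pure in ACVF. For separatedness, I would argue contrapositively: failure of separatedness produces, by the classical analysis in \cite{Bau80} and \cite{Del88}, a pseudo-Cauchy sequence in $K$ admitting a pseudo-limit in $L \setminus K$, whose type over $\mathcal{K}$ is the standard example of a non-definable type, contradicting stable embedding of $\mathcal{K}$ in $\mathcal{L}$.

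For (2)$\Rightarrow$(1), fix a tuple $b \in L^n$ and an $\mathcal{L}$-formula $\phi(x;y)$. By $(\text{EQ})_{\RV}$, $\phi$ is equivalent to a Boolean combination of formulas of the form $\psi(\rv(P_1(x;y)),\ldots,\rv(P_s(x;y)))$, where $\psi$ is an $\RV$-formula and the $P_i$ are polynomials of bounded total degree $d$. Let $V \subseteq L$ be the finite-dimensional $K$-vector subspace spanned by all monomials $b^\alpha$ with $|\alpha|\leq d$; by separatedness, choose a separating basis $\{c_0,\ldots,c_{m-1}\}$ of $V$ over $\mathcal{K}$ and expand $P_i(b;y) = \sum_j r_{i,j}(y)\, c_j$ with $r_{i,j}(y) \in K[y]$. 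Then for every $a \in K^{|y|}$,
\[\rv(P_i(b;a)) = \bigoplus_j \rv(r_{i,j}(a)) \cdot \rv(c_j),\]
so the truth value of $\phi(b;a)$ depends on $a$ only through the tuple $(\rv(r_{i,j}(a)))_{i,j} \in \RV_K$, with fixed side-parameters $(\rv(c_j))_j \in \RV_L$. Now $k$ is algebraically closed, hence $k_K$ is stably embedded in $k_L$ automatically by strong minimality; combined with the hypothesis that $\Gamma_K$ is stably embedded in $\Gamma_L$ and the pure short exact sequence $1 \to k^\times \to \RV^\times \to \Gamma \to 0$, Fact \ref{FactACGZ} yields that $\RV_K$ is stably embedded in $\RV_L$. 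Therefore the $\RV$-formula describing $\phi(b;\cdot)$ is equivalent to one with parameters in $\RV_K$, and pulling back along $\rv$ delivers the sought $\mathcal{K}$-definition. The construction is uniform in $b$, giving the uniform variant.

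The main obstacle is the transfer of stable embedding from the sorts $\Gamma$ and $k$ up to $\RV$ with arbitrary $\RV_L$-parameters allowed, since the side-parameters $\rv(c_j)$ coming from the separating basis must be permitted on the $\RV_L$ side. This is precisely the content of Fact \ref{FactACGZ} applied to the pure short exact sequence displayed above, which is why the short-exact-sequence reduction of Subsection \ref{SubsectionPreliminariesAbelianGroups} is a prerequisite for the valued-field reduction rather than a corollary of it. A subsidiary point is that the polynomial degree bound $d$ depends on $\phi$, so the finite-dimensional space $V$ (and its separating basis) must be chosen per formula; this causes no trouble for type-definability, and uniformity follows because the separating-basis decomposition and the subsequent reductions are each performable in a definable way.
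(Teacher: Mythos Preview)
Your $(2)\Rightarrow(1)$ is correct and matches the paper's route: the separating-basis reduction to $\RV$ is exactly Theorem~\ref{PropRV}, and the passage from $\RV$ to $(\Gamma,k)$ via Fact~\ref{FactACGZ} is Proposition~\ref{ProAbeGrp} and Corollary~\ref{CorollaryStablyEmbeddedSubShortExactSequence} (divisibility of $k_L^\times$ in ACVF handles the side condition there). One correction on uniformity: the separating basis is \emph{not} chosen definably in $b$. What is uniform is the shape of the resulting $\RV$-formula --- the number of parameter slots $\rv(c_j)$ is bounded by the number of monomials of degree at most $d$, hence by $\phi$ alone --- after which uniform stable embedding of $\RV_K$ in $\RV_L$ trades the $\RV_L$-parameters for $\RV_K$-parameters uniformly.

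The separatedness half of $(1)\Rightarrow(2)$ has a gap. Unwinding failure of separatedness inductively yields an $n$-dimensional separated $K$-subspace $V\subset L$ with separating basis $c_0,\ldots,c_{n-1}$ and an $a\in L\setminus V$ such that $\{\val(a-w):w\in V\}$ has no maximum. This is a pseudo-Cauchy phenomenon in the valued $K$-vector space $V$, not in $K$; nothing in \cite{Bau80} or \cite{Del88} converts it into a pseudo-Cauchy sequence in $K$ with pseudo-limit in $L$, and Fact~\ref{FactMaximalImpliesSeparatedExtension} only says that $K$ cannot be maximal, without placing the obstructing limit inside $L$. The paper's Proposition~\ref{PropRV2} supplies the missing step: since $\tp(c_0,\ldots,c_{n-1},a/K)$ is definable by the stable-embedding hypothesis, one interprets $(V,\val)$ inside $K^n$, so the nested balls $B_{\geq\val(a-w)}(a)\cap V$ become a $K$-definable family; passing to the maximal immediate extension $K'\succeq K$ (elementary by $(\text{AKE})_{\RV}$ and $(\text{Im})$), spherical completeness of separated finite-dimensional valued $K'$-vector spaces forces a common point, a contradiction. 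The interpretation of $V$ in $K^n$ via definability of the type of the basis is precisely the idea your sketch is missing.
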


    \begin{theorem}[{\cite[Theorem 5.2.3.]{CY19}}]\label{ThmCubidesYe}
        Consider $\mathcal{K} \preceq \mathcal{L}$ be two real closed valued fields, or two p-adically closed valued fields. The following are equivalent:
            \begin{enumerate}
                \item $\mathcal{K}$ is stably embedded (resp.\ uniformly stably embedded) in $\mathcal{L}$,
                \item the extension $\mathcal{L}/\mathcal{K}$ is separated, $k_K$ is stably embedded (resp.\ uniformly stably embedded) in $k_L$ and $\Gamma_K$ is stably embedded (resp.\ uniformly stably embedded) in $\Gamma_L$.
            \end{enumerate}
    \end{theorem}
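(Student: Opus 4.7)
The plan is to prove both directions of the equivalence, with $(2) \Rightarrow (1)$ carrying the substantial content. For $(1) \Rightarrow (2)$, I would first note that the stable embeddedness of $k_K$ in $k_L$ and of $\Gamma_K$ in $\Gamma_L$ follows automatically: both RCVF and pCVF satisfy $(\text{SE})_{\Gamma,k}$, so any $L$-definable subset of $k_L^n$ or $\Gamma_L^n$ is already definable over parameters in the respective sort; intersecting with $K$ yields a $K$-definable set which, by purity, is definable from parameters in $k_K$ or $\Gamma_K$. The separatedness of $\mathcal{L}/\mathcal{K}$ is the delicate part of this direction. Arguing contrapositively, if some finite-dimensional $K$-subspace $V \subseteq L$ admits no separating basis, one extracts an element $a \in L$ whose distance set $\{v(a-c) : c \in K\}$ has no maximum in $\Gamma_K$. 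The decreasing family of $L$-definable balls $B_\alpha = \{x : v(x-a) \geq \alpha\}$ then cuts out a chain of $K$-definable subsets of $K$ that does not stabilise and cannot be described by any $K$-definable scheme of parameters, contradicting stable embeddedness of $\mathcal{K}$ in $\mathcal{L}$.

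For $(2) \Rightarrow (1)$, the main tool is the (resplendent) relative quantifier elimination of RCVF and pCVF down to the leading-term structure $\RV$, possibly enriched with the ordering on $\Gamma$ (for RCVF) or with $n$-th power predicates (for pCVF). A formula $\phi(x,a)$ with $x$ in the main sort and $a \in L^m$ reduces to a boolean combination of conditions of the form $\rv\bigl(p(x,a)\bigr) \in S$, with $p$ a polynomial with integer coefficients in $x$ and $S$ an $\RV$-definable set. Collecting the finitely many coefficients $a_\alpha \in L$ that occur in such $p$ and choosing a separating $K$-basis $\{c_0, \ldots, c_{n-1}\}$ of the $K$-subspace they span, I write $a_\alpha = \sum_i k_{\alpha i} c_i$ with $k_{\alpha i} \in K$; the separating-basis identity yields
\[\rv\bigl(p(x,a)\bigr) = \bigoplus_i \rv(c_i) \cdot \rv\bigl(q_i(x)\bigr),\]
where $q_i(x) = \sum_\alpha k_{\alpha i} x^\alpha$ is a polynomial over $K$. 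Thus on the diagonal $x \in K$ the formula $\phi(x,a)$ becomes an $\RV$-formula whose free terms $\rv(q_i(x))$ lie in $\RV_K$ and whose extra parameters are the fixed elements $\rv(c_i) \in \RV_L$. Combining the stable embeddedness of $\Gamma_K$ in $\Gamma_L$ and of $k_K$ in $k_L$ via the pure short exact sequence $1 \to k^\times \to \RV^\star \to \Gamma \to 0$ together with the reduction principle for pure short exact sequences (Fact \ref{FactACGZ}), I conclude that $\RV_K$ is stably embedded in $\RV_L$. Pulling back through the relative quantifier elimination gives the required $K$-definability of the $K$-trace of $\phi(x,a)$.

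The principal obstacle is to choose the separating basis coherently across all polynomials appearing in the reduction of $\phi$, so that the rewriting into $\RV_K$-terms with fixed $\RV_L$-parameters is uniform in the $L$-coefficients; this is possible because only finitely many coefficients from $L$ enter, and they can be decomposed on a common basis. A subtlety specific to pCVF is that $k_K = k_L$ is a finite field, making the residue-field hypothesis vacuous and leaving the entire content in separatedness and stable embeddedness of the value group; for RCVF, by contrast, the residue-field hypothesis is genuinely needed. The uniform variant of the theorem follows by the same argument once one tracks that each $\RV_K$-definability scheme can be read off uniformly from $a$, which is ensured by the resplendency of both the relative quantifier elimination and the reduction principle of Fact \ref{FactACGZ}.
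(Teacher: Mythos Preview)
Your overall strategy for $(2)\Rightarrow(1)$ matches the paper's approach: reduce to $\RV$ via relative quantifier elimination (Theorem~\ref{PropRV}), use a separating basis to push the $L$-coefficients of the polynomials into $\RV_L$-parameters, and then invoke the short-exact-sequence reduction (Proposition~\ref{ProAbeGrp} / Corollary~\ref{CorollaryStablyEmbeddedSubShortExactSequence}) to descend from $\RV$ to $\Gamma$ and $k$. This is precisely how the paper recovers the RCVF case of Theorem~\ref{ThmCubidesYe} via Theorem~\ref{TheoremReductionGammakElementaryPairs}. There are, however, two genuine gaps.

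\textbf{Separatedness in $(1)\Rightarrow(2)$.} Your contrapositive sketch is not correct as written. If a finite-dimensional $V\subseteq L$ fails to be separated, the induction produces $a\in L$ whose distance set $\{v(a-w):w\in V'\}$ has no maximum for some \emph{separated} $V'$ of dimension possibly $>1$; you cannot in general reduce to $V'=K$. More seriously, even granting such an $a$, your conclusion that the resulting chain of balls ``cannot be described by any $K$-definable scheme of parameters'' is backwards: since $\mathcal{K}$ is assumed stably embedded, $\tp(a,c_0,\dots,c_{n-1}/K)$ \emph{is} definable, so the family of balls \emph{is} $K$-definable. What you actually need is that this definable nested family has nonempty intersection, and that does not follow from stable embeddedness alone. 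The paper supplies the missing ingredient in Corollary~\ref{CorollaryDefinablyCompleteSubVectorSpace}: one interprets $V'$ in $K^n$ via the separating basis, passes to the maximal immediate extension $K'$ (where the intersection is nonempty by genuine spherical completeness, Lemma preceding Corollary~\ref{CorollaryDefinablyCompleteSubVectorSpace}), and transfers back via $(\text{Im})$ and $(\text{AKE})_{\RV}$. Your argument lacks this transfer step entirely.

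\textbf{The $p$-adically closed case.} $p$-adically closed fields are of mixed characteristic $(0,p)$ and are \emph{not} benign in the sense of Definition~\ref{DefBenign}; relative quantifier elimination to a single $\RV$ sort is not available for them. Enriching $\RV$ by ``$n$-th power predicates'' does not fix this: one genuinely needs the whole system $\RV_{<\omega}$ (Fact~\ref{FactRelQERV}). The paper therefore treats pCVF separately, via Theorem~\ref{TheoremReductionGammakElementaryPairsMixeCharacteristic}, where the extra work is to view each $\RV_{\leq n}$ as a $\{W_n(k)^\times\}$-$\{\Gamma\}$-enriched exact sequence and to use that $W_n(k)$ is interpretable in $k$ (Proposition~\ref{KerValRV_n}). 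Your proposal conflates the two cases and would not go through for pCVF as stated.
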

    
    Only the non-uniform case is stated in {\cite[Theorem 1.9]{CD16}}, but the proof goes through for the uniform case also. In \cite{CY19}, the uniform case can be deduced from the proof or by \cite[Theorem 6.0.3]{CY19}. We will generalise these theorems to extensions $\mathcal{L}/ \mathcal{K}$ of benign theories $T$ of Henselian valued fields (Paragraph \ref{Preliminaries Benign theory of Henselian valued fields}). In fact, we will not assume that $\mathcal{K}$ and $\mathcal{L}$ share the same complete theory. We proceed in two steps. For a separated extension of benign Henselian valued fields $\mathcal{L}/ \mathcal{K}$, we characterise $K \subseteq^{st} \mathcal{L}$ (resp.\ $K \subseteq^{ust} \mathcal{L}$) by such property of the $\RV$-sorts (Paragraph \ref{SectionRV}), and later by such properties of the value groups and the residue fields (Paragraph \ref{SectionGammaK}). 
    
    \subsubsection{Separatedness as a necessary condition}\label{SectionSeparatednessNecessaryCondition}
    We are going to prove that elementary submodels of benign valued fields are stably embedded only if the extension is separated (Proposition \ref{PropRV2}). This is a generalisation of $(1 \Rightarrow 2)$ in \cite[Theorem 1.9]{CD16}. Notice that our proof requires that the extension is elementary. In the next paragraphs, it will no longer be assumed.
    \begin{proposition}\label{PropRV2}
     Let $\mathcal{L}/ \mathcal{K}$ be an elementary extension of valued fields, with $\Th(\mathcal{L})$ a completion of a benign theory of Henselian valued fields. If $\mathcal{K}$ is stably embedded in $\mathcal{L}$, then $\mathcal{L}/ \mathcal{K}$ is a separated extension of valued fields.
    \end{proposition}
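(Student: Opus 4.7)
The plan is to prove the contrapositive: assuming $\mathcal{L}/\mathcal{K}$ fails to be separated, I exhibit an element of $L$ whose type over $K$ cannot be definable. By the standard additive reduction (separatedness of a finite-dimensional space reduces, inductively on dimension, to separatedness of one-dimensional extensions), I may assume there is $a\in L\setminus K$ such that $K\oplus Ka$ admits no separating basis. Equivalently, the set $v(a-K):=\{v(a-k):k\in K\}\subseteq \Gamma_L\cup\{\infty\}$ has no maximum: a maximum at $k_0$ would make $\{1,a-k_0\}$ a separating basis by the ultrametric inequality, and conversely any separating basis $\{1,a+\alpha\}$ forces $v(a+\alpha)$ to be that maximum. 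Picking witnesses $k_i\in K$ with $\gamma_i:=v(a-k_i)$ cofinal and strictly increasing in $v(a-K)$ gives a pseudo-Cauchy sequence $(k_i)_{i<\lambda}$ with $a$ as pseudo-limit, and no pseudo-limit in $K$, since such a limit would attain $\sup v(a-K)$ there.

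Now assume $\tp(a/K)$ is definable in $\mathcal{L}$. Applied to the formula $\phi(x;y,z):=\bigl(v(x-y)\geq z\bigr)$, this yields an $\mathrm{L}_K$-defining scheme $d_\phi(y,z;\bar c)$ with $\bar c\in K$ such that for all $k\in K$ and $\gamma\in\Gamma_K$,
\[
\mathcal{L}\models v(a-k)\geq\gamma\quad\Longleftrightarrow\quad \mathcal{K}\models d_\phi(k,\gamma;\bar c).
\]
Set $S_\gamma:=\{k\in K:\mathcal{K}\models d_\phi(k,\gamma;\bar c)\}$ for $\gamma\in\Gamma_K$. By the ultrametric inequality each non-empty $S_\gamma$ is an open $K$-ball of radius $\gamma$, and the family is nested: $\gamma\leq\gamma'$ implies $S_{\gamma'}\subseteq S_\gamma$. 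For each $\gamma_i\in\Gamma_K$ the set $S_{\gamma_i}$ is non-empty since $k_j\in S_{\gamma_i}$ for $j>i$, while $\bigcap_i S_{\gamma_i}=\emptyset$ in $K$ because $(k_i)$ has no pseudo-limit there.

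To obtain the contradiction, I pass to a sufficiently saturated elementary extension $\mathcal{K}^*\succeq\mathcal{K}$. By saturation, the consistent partial type $\{d_\phi(x,\gamma_i;\bar c):i<\lambda,\ \gamma_i\in\Gamma_K\}$ is realised by some $a^*\in K^*$, so that $a^*$ is a pseudo-limit of $(k_i)$ in $K^*$. By the benign hypotheses $(\text{Im})$ and $(\text{AKE})_{\RV}$, the residue/value-group data of the extensions $K\langle a\rangle\subseteq\mathcal{L}$ and $K\langle a^*\rangle\subseteq\mathcal{K}^*$ can be compared inside a common sufficiently saturated monster containing both, and relative quantifier elimination in $\mathrm{L}_{\RV}$ produces a $K$-isomorphism of valued fields $K\langle a\rangle\to K\langle a^*\rangle$ sending $a\mapsto a^*$. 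Combined with elementarity $\mathcal{K}\prec\mathcal{L}$, this forces $\tp(a/K)=\tp(a^*/K)$, so the defining scheme $d_\phi$ must be satisfied by \emph{some} element of $K$ realising the pseudo-limit type of $(k_i)$ — equivalently, some $k\in K$ lies in $\bigcap_i S_{\gamma_i}$, contradicting the fact that $(k_i)$ has no pseudo-limit in $K$.

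The main obstacle is the third paragraph, and specifically the cofinality analysis: the sequence $(\gamma_i)$ need not be cofinal in $\Gamma_K$, and in the benign setting $v(a-K)$ may be bounded above by an element of $\Gamma_L\setminus\Gamma_K$ rather than unbounded in $\Gamma_K$. Handling this requires splitting into subcases according to where $\sup v(a-K)$ lies and invoking purity and stable embeddedness of $\Gamma_K$ in $\Gamma_L$ (i.e.\ $(\text{SE})_{\Gamma,k}$) to select an appropriate $\gamma_i$-cofinal subsequence inside $\Gamma_K$; the delicate point is to arrange that the realisation $a^*\in K^*$ genuinely corresponds to $a$ under the amalgamation, which is exactly where $(\text{Im})$ and $(\text{AKE})_{\RV}$ are indispensable.
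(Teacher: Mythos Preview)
Your main gap is the very first reduction. You claim that non-separatedness of $\mathcal{L}/\mathcal{K}$ produces some $a\in L\setminus K$ with $K\oplus Ka$ non-separated, calling this ``the standard additive reduction''. It is not. What the inductive structure of separatedness actually gives you is a separated $K$-subspace $W\subseteq L$ of some dimension $n$ and an element $a$ with no best $W$-approximation, i.e.\ $\{v(w-a):w\in W\}$ has no maximum. There is no evident way to replace $W$ by $K$ here: the witnessing pseudo-Cauchy sequence lives in $W$, not in $K$, and $a$ may perfectly well have a best $K$-approximation while lacking a best $W$-approximation. This is precisely the step the paper does \emph{not} skip. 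Its key idea is that, once $W$ is separated with separating basis $c_0,\ldots,c_{n-1}$, the definability of $\tp(c_0,\ldots,c_{n-1}/K)$ (from stable embeddedness) lets one interpret the valued $K$-vector space $W$ inside $K^n$, so that the nested family of balls in $W$ becomes a $K$-definable family. One then passes to the maximal immediate extension $K'$ of $K$; by $(\text{Im})$ and $(\text{AKE})_{\RV}$ this is an elementary extension, and there the interpreted space is genuinely spherically complete, so the chain has a point. Elementarity pulls this back to $K$, producing a best $W$-approximation and completing the inductive step. Your argument only handles the base case $W=K$ of this induction.

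Two further remarks. First, your stated worry in the last paragraph --- that the $\gamma_i=v(a-k_i)$ might lie in $\Gamma_L\setminus\Gamma_K$ --- is not an issue in the two-dimensional case you have set up: if $v(a-k)\notin\Gamma_K$ for some $k\in K$, then for every $k'\in K$ one has $v(k-k')\in\Gamma_K\neq v(a-k)$, hence $v(a-k')=\min(v(a-k),v(k-k'))\le v(a-k)$, so $v(a-k)$ would already be the maximum of $v(a-K)$, contrary to hypothesis. Second, your third paragraph does not yield the contradiction you want: establishing $\tp(a/K)=\tp(a^*/K)$ for some $a^*\in K^*\succ K$ says nothing about realisations in $K$ itself. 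The clean route (and the paper's route) avoids types of $a$ altogether: the defining scheme $d_\phi$ makes $I:=\{\gamma\in\Gamma_K:\exists k\,d_\phi(k,\gamma;\bar c)\}$ and the nested ball family $(S_\gamma)_{\gamma\in I}$ definable over $K$; the sentence $\exists x\,\forall\gamma\in I\,d_\phi(x,\gamma;\bar c)$ is then first-order over $K$, true in the maximal immediate extension by spherical completeness, and hence true in $K$ by elementarity --- producing a pseudo-limit in $K$ and the desired contradiction. No amalgamation of $a$ and $a^*$ is needed.
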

    
    \begin{remark*}
        In fact, the proof below (more specifically the proof of Corollary \ref{CorollaryDefinablyCompleteSubVectorSpace}) does not require relative quantifier elimination, but only the properties $\text{(Im)}$ and  $(\text{AKE})_{\RV}$. 
    \end{remark*}
    \begin{proof}
    We start by defining the notion of valued vector spaces. 
    
    \begin{definition}
    A valued $\mathcal{K}$-vector space $\mathcal{V}$ is a $K$-vector space $V$ and a totally ordered set $\Gamma_V$ together with:
        \begin{itemize}
            \item a group action $+:\Gamma_K\times \Gamma_V \rightarrow \Gamma_V$ which is strictly increasing in both variables,
            \item a surjective map, called the \emph{valuation}, $\val: V\setminus \{0\} \rightarrow \Gamma_V$ such that $\val(w+v)\geq \min(\val(w),\val(v))$ and $\val(\alpha \cdot v)= \val(\alpha)+ \val(v)$ for all $v,w\in V$ and $\alpha\in K$. By convention, $\val(0)=\infty$.
        \end{itemize}
    \end{definition}
    Of course, the notion of separating basis and separated vector space extend naturally to this slightly more general setting. For $v\in V$ and $\gamma\in \Gamma_V$, we define the closed ball $B_{\geq \gamma}(v)$ by $\{v'\in V \ \vert \ \val(v-v')\geq \gamma\}$ and the open ball $B_{> \gamma}(v)$ by $\{v'\in V \ \vert \ \val(v-v')> \gamma\}$.
    We borrow the following lemma and its proof from a lecture course of Martin Hils:
    \begin{lemma}
        Assume $\mathcal{K'}=(K',\val)$ to be a maximal valued field. Let $\mathcal{V}'=(V',\val)$ be a separated finite dimensional valued $\mathcal{K}'$-vector space. Then $V'$ is spherically complete: if $(D_i)_{i\in I}$ is a family of nested balls, then the intersection $\bigcap_{i\in I}D_i$ is non-empty.
    \end{lemma}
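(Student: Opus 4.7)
The plan is to prove this by reducing to the spherical completeness of the maximal valued field $\mathcal{K}'$ via the separating basis. Let me sketch how this should go.

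First I would fix a separating basis $\{c_0, \ldots, c_{n-1}\}$ of $V'$ over $\mathcal{K}'$, so that the map
\[
\Phi: \bigoplus_{k<n} \mathcal{K}' \to \mathcal{V}', \qquad (a_0, \ldots, a_{n-1}) \mapsto \sum_{k<n} a_k c_k
\]
is a valuation-preserving $\mathcal{K}'$-linear isomorphism when each summand $\mathcal{K}'$ on the left is equipped with the shifted valuation $a \mapsto \val(a) + \val(c_k)$, and the direct sum carries the minimum valuation. Via $\Phi$, the spherical completeness of $\mathcal{V}'$ is reduced to that of the direct sum.

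Next I would let $(D_i)_{i \in I}$ be a nested family of balls in $\mathcal{V}'$, writing $D_i = B_{* \gamma_i}(v_i)$ where $*$ is either $\geq$ or $>$ (and may depend on $i$), and expanding $v_i = \sum_{k<n} a_{i,k} c_k$. For each fixed coordinate $k$, I would form the family
\[
D_i^{(k)} := B_{* (\gamma_i - \val(c_k))}(a_{i,k}) \subseteq K'.
\]
The key observation, using the separating basis property, is that if $i \leq j$ in the indexing poset of the nested family, then $v_j \in D_i$ translates coordinate-by-coordinate into $a_{j,k} \in D_i^{(k)}$, so each family $(D_i^{(k)})_{i\in I}$ is again a nested chain of balls in $K'$. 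By the maximality of $\mathcal{K}'$ (equivalently, its spherical completeness), for each $k < n$ we can pick $a_k \in \bigcap_i D_i^{(k)}$, and then setting $v := \sum_k a_k c_k$ and invoking $\val(v - v_i) = \min_k(\val(a_k - a_{i,k}) + \val(c_k))$ one checks $v \in D_i$ for all $i$.

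I expect the main obstacle, and the point requiring care, to be the bookkeeping around mixed open/closed balls and the shift of radii by $\val(c_k)$: one has to check that a strict inequality $\val(v' - v_i) > \gamma_i$ in $\mathcal{V}'$ is equivalent (under the minimum valuation given by the separating basis) to strict inequalities $\val(a'_k - a_{i,k}) > \gamma_i - \val(c_k)$ in each coordinate, so that the nested ball family lifted to $K'$ remains of the same open/closed type at each index. Once this is in place the reduction to the $1$-dimensional case, where the statement is exactly spherical completeness of $\mathcal{K}'$, is straightforward.
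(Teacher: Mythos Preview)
Your argument is correct and takes a somewhat different route from the paper's. The paper proceeds by induction on the dimension: given a separating basis $\{w_0,\ldots,w_n\}$ and a decreasing chain of closed balls $B_\alpha = B_{\geq\gamma_\alpha}(v_\alpha)$, it observes (after passing to a subsequence) that the last coordinates $(a_{\alpha,n})_\alpha$ form a pseudo-Cauchy sequence in $K'$, chooses a pseudo-limit $a_n$, and replaces each $v_\alpha$ by $v_\alpha' = \sum_{i<n} a_{\alpha,i}w_i + a_n w_n$, thereby reducing to a chain of balls in the $n$-dimensional subspace $\langle w_0,\ldots,w_{n-1}\rangle_{K'}$. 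Your approach instead decomposes everything at once: via the separating basis, $V'$ is isometric to $\prod_{k<n} K'$ with the minimum valuation, a ball in $V'$ is exactly a product $\prod_k D_i^{(k)}$ of balls in $K'$, and since each factor is nonempty the inclusion $D_j\subseteq D_i$ forces $D_j^{(k)}\subseteq D_i^{(k)}$ coordinatewise; spherical completeness of $K'$ then finishes immediately. This is more direct and sidesteps the subsequence and pseudo-limit bookkeeping, at the cost of not reusing the familiar pseudo-Cauchy language.

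One small notational caveat: in the paper's abstract setting $\Gamma_{V}$ is only an ordered set with a $\Gamma_K$-action, so the expression $\gamma_i - \val(c_k)$ need not name an element of $\Gamma_K$. This is harmless for your argument, since the set $\{a\in K' : \val(a-a_{i,k}) + \val(c_k) \mathrel{*} \gamma_i\}$ is in any case an upward-closed condition on $\val(a-a_{i,k})$ and hence either all of $K'$ or a genuine ball; nested chains of such sets still have nonempty intersection in a spherically complete field. But you should phrase it that way rather than writing the subtraction.
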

    \begin{proof}
        We proceed by induction on $n\geq 1$. If $V'$ is of dimension $1$, we simply have that $(K',\val) \simeq (V',\val)$ as $\mathcal{K}'$-valued vector spaces. Then, we only need to recall that $\mathcal{K}'$ is pseudo-complete as a maximal valued field.
        Assume the lemma to holds for all sub-$\mathcal{K}'$-vector spaces of dimension $n$ and let $W=\{w_0,\ldots,w_n\}$ be a separating basis of  $V'$, a sub-$\mathcal{K}'$-vector space of dimension $n+1$. Let $(B_\alpha=B_{\geq \gamma_{\alpha}}(v_{\alpha}))_{\alpha<\lambda}$ be a decreasing sequence of closed balls in $\mathcal{V}'$, with $\lambda$ a limit ordinal. For $\alpha<\lambda$, write $v_{\alpha}= \sum_{i\leq n} a_{\alpha,i} w_i$ with $a_{\alpha,i} \in K'$. Applying the definition of separating basis and by taking a subsequence, we may assume that  
        $\gamma_\alpha \leq \val(v_{\alpha+1} -v_{\alpha})=\val(a_{\alpha+1,n} -a_{\alpha,n})+\val(w_n)$ for all $\alpha<\lambda$. 
        It follows that the sequence $(a_{\alpha,n})_{\alpha<\lambda}$ is pseudo-Cauchy in $\mathcal{K}'$. As $\mathcal{K}'$ is pseudo-complete, one finds a pseudo-limit $a_n$ in $K'$. We consider now the sequence $(v_{\alpha}')_{\alpha<\lambda}$ where $v_{\alpha}' = \sum_{i<n}a_{\alpha,i} w_i + a_n w_n$. One has the following:
        \begin{itemize}
            \item $v_{\alpha}' \in B_\alpha$ i.e. $B_{\geq \gamma_\alpha}(v_\alpha) = B_{\geq \gamma_\alpha}(v_\alpha')$,
            \item the set $B_\alpha':= \{\sum_{i<n} d_{\alpha,i} w_i \ \vert \ d_{\alpha,i}\in K',  \sum_{i<n} d_{\alpha,i} w_i+ a_nw_n \in B_\alpha\}$ is the (non-empty) closed ball $$B_{\geq \gamma_\alpha}(\sum_{i<n}a_{\alpha,i}w_i)$$ in $V'=<w_0,\ldots , w_{n-1}>_{K'}$.
            \item $(B_\alpha')_{\alpha<\lambda}$ is a decreasing sequence of closed balls in $V'$ (of $\mathcal{K}'$-dimension $n$). 
        \end{itemize}
        By the induction hypothesis, the sequence $(B_\alpha')_{\alpha<\lambda}$ admits a non empty intersection. Let $v' \in \cap_{\alpha<\lambda}B_{\alpha}'$. Then, one has that $v=v'+a_{n}w_n \in \cap_{\alpha<\lambda}B_{\alpha} $.

    \end{proof}

    \begin{corollary}\label{CorollaryDefinablyCompleteSubVectorSpace} 
         Assume that $\mathcal{K}$ is a stably embedded elementary submodel of $\mathcal{L}$. Any finite dimensional separated sub-$\mathcal{K}$-vector space $V$ of $\mathcal{L}$ is definably spherically complete: let $(D_i)_{i\in I}$ be a definable family of balls ($D_i$ is a closed ball defined by a parameter $i$, and $I$ is a definable set) with the finite intersection property (no finite intersection is empty). Then the intersection $\bigcap_{i\in I}D_i$ is non-empty.
    \end{corollary}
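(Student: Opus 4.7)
The plan is to reduce to the preceding lemma by extending scalars to a maximal immediate extension of $\mathcal{K}$, and then to descend the resulting witness to $V$ via elementarity provided by Ax--Kochen--Ershov.

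Specifically, I fix a separating basis $\{v_0, \ldots, v_{n-1}\}$ of $V$ over $\mathcal{K}$, take a maximal immediate extension $\hat{\mathcal{K}}$ of $\mathcal{K}$ (which belongs to $T$ by $(\text{Im})$), and work inside a sufficiently saturated elementary extension $\mathbb{L} \succeq \mathcal{L}$ in which $\hat{\mathcal{K}}$ embeds over $\mathcal{K}$; such an embedding exists by saturation of $\mathbb{L}$. The $\hat{K}$-span $\hat{V} := \hat{K} v_0 + \cdots + \hat{K} v_{n-1} \subseteq \mathbb{L}$ inherits the separating basis, hence is a separated finite-dimensional valued $\hat{\mathcal{K}}$-vector space, and by the preceding lemma $\hat{V}$ is spherically complete. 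For each $i \in I^{\mathcal{L}}$, the ball $D_i^{\mathbb{L}}$ has centre $c_i \in V \subseteq \hat{V}$, so $\hat{D}_i := D_i^{\mathbb{L}} \cap \hat{V}$ is a closed ball of $\hat{V}$ with the same centre and radius; the family $(\hat{D}_i)_i$ inherits FIP from $(D_i)_i$ and is nested by ultrametricity, so spherical completeness produces $\hat{v} \in \bigcap_{i \in I^{\mathcal{L}}} \hat{D}_i$.

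To descend $\hat{v}$ to a witness inside $V$, I note that the statement $\sigma := \exists x\, \forall i \in I\, (x \in D_i)$ is first-order with parameters in $\mathcal{L}$, so it suffices to verify it in some elementary extension. Applying the same strategy inside a maximal immediate extension $\hat{\mathbb{L}}$ of $\mathbb{L}$---which satisfies $\mathbb{L} \preceq \hat{\mathbb{L}}$ by $(\text{AKE})_{\Gamma,k}$ and which is itself spherically complete as a maximal valued field---the full family $(D_i^{\hat{\mathbb{L}}})_{i \in I^{\hat{\mathbb{L}}}}$ has non-empty intersection, so $\sigma$ holds in $\hat{\mathbb{L}}$. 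By $\mathcal{L} \preceq \mathbb{L} \preceq \hat{\mathbb{L}}$, $\sigma$ holds in $\mathcal{L}$, providing a witness $v^{\star} \in L$ with $v^{\star} \in D_i$ for every $i \in I$; since each $D_i \subseteq V$, one has $v^{\star} \in V$.

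The main technical obstacle is the embedding step: realising $\hat{\mathcal{K}}$ over $\mathcal{K}$ inside $\mathbb{L}$ (and similarly $\hat{\mathbb{L}}$ over $\mathbb{L}$) while preserving the valued-field structure, together with the elementarity of these immediate extensions. Both rely on $(\text{Im})$ and $(\text{AKE})_{\Gamma,k}$, which are available for benign theories by Fact \ref{FactBenignTheories}. It is worth remarking that stable embeddedness of $\mathcal{K}$ in $\mathcal{L}$ is not actually used in this argument; it appears as a standing hypothesis because the corollary will be applied inside the proof of Proposition \ref{PropRV2}, where it is essential.
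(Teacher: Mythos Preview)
There is a genuine gap, and it stems from your closing remark: stable embeddedness is \emph{exactly} what is needed, and the argument cannot go through without it. The balls $D_i$ live in $V$, and $V$ --- being the $K$-span of $v_0,\ldots,v_{n-1}$ inside $L$ --- is not an $\mathcal{L}$-definable set (the predicate $K$ is not in the language). Hence your sentence $\sigma$ is not first-order over $\mathcal{L}$, and the notation $D_i^{\hat{\mathbb{L}}}$ has no meaning. If you instead read $D_i$ as the ambient closed ball in $L$ with the same centre and radius, then $\sigma$ does become first-order, but now $D_i\not\subseteq V$ and the witness $v^\star\in L$ need not lie in $V$; in the intended application inside Proposition~\ref{PropRV2} the element $a\in L\setminus V$ already belongs to every such ambient ball, so that reading is vacuous.

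The paper's proof uses stable embeddedness precisely to sidestep this: since $\tp(v_0,\ldots,v_{n-1}/K)$ is definable, the relation $\val\big(\sum_j a_j v_j\big) > \val\big(\sum_j b_j v_j\big)$ on coordinate tuples is $K$-definable, so the valued $\mathcal{K}$-vector space $(V,\val)$ is interpreted in $\mathcal{K}$. The definable family $(D_i)_{i\in I}$ then lives inside $\mathcal{K}$; one passes to the maximal immediate extension $K'$ of $K$ (not of $L$ or $\mathbb{L}$), observes that the interpreted copy $(K')^n$ is a separated valued $K'$-vector space over a maximal field, applies the preceding lemma there, and transfers the witness back to $K^n\cong V$ via $K\preceq K'$ (from $(\mathrm{Im})$ and the Ax--Kochen--Ershov principle). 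Your detour through $\mathbb{L}$ and $\hat{\mathbb{L}}$ never gains access to this interpretation, which is why it cannot keep track of $V$. (Incidentally, the first half of your argument --- extending scalars to $\hat{K}$ and invoking the lemma on $\hat{V}$ --- is never actually used, since you switch to the $\hat{\mathbb{L}}$ strategy for the descent; and the claim that the separating basis survives the passage to $\hat{K}$, while in fact true because $\hat{K}/K$ is immediate, goes unjustified.)
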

    \begin{proof}
        Let $C=\{c_0, \ldots, c_{n-1}\}$ be a separating basis of $V$. We write $\gamma_i=\val(c_i)\in \Gamma_L$ for $i<n$. One can interpret $V$ in $K^n$: elements are identified with their decomposition in the basis $C$, addition and scalar multiplication are defined as usual. Since $K$ is stably embedded in $\mathcal{L}$, the type $\tp(c_0, \ldots, c_{n-1}/K)$ is definable. It follows that $v(\sum_{i<n}a_ic_i)>v(\sum_{i<n}b_ic_i)$ holds if and only if $(a_0,\ldots,a_{n-1})$ and $(b_0,\ldots,b_{n-1})$ satisfy a certain $K$-formula. So we also interpret the valuation.
        Let $(D_i)_{i\in I}$ be a uniformly definable family of nested balls. Consider $K'$, the maximal immediate extension of $K$. By the previous lemma, the (definable) intersection $\bigcap_{i\in I'} D_i'$ has a point in $K'$ (where $I'$ is the definable set $I(K')$, $D_i'=D_i(K')$), and so $\bigcap_{i\in I} D_i$ is non-empty, as $K' \succeq K$ by the Ax-Kochen-Ershov principle.
    \end{proof}
     We prove by induction on $n$ that any sub-$\mathcal{K}$-vector space of $\mathcal{L}$ of dimension $n$ is separated. There is nothing to show for $n=1$. Let $V$ be a finite dimensional $K$-vector subspace of $L$, with a separating basis $C=\{c_0, \ldots, c_{n-1}\}$. Let $a$ be any element of $L \setminus V$. Let us show that the $K$-vector space $\tilde{V}=<V,a>_K$ generated by $V$ and $a$ is also separated.  
    It follows from Corollary \ref{CorollaryDefinablyCompleteSubVectorSpace} that $\{v(w-a) \vert \ w \in V\}$ has a maximum. Indeed, otherwise the family of balls $(B_{\geq \val(w-a)}(a))_{w\in V}$ will have an empty intersection. As $\tp(a, c_0,\ldots, c_{n-1}/\mathcal{K})$ is definable ($\mathcal{K}$ is stably embedded in $\mathcal{L}$), it is a definable family of balls, contradicting the fact that $V$ is definably spherically complete by Corollary \ref{CorollaryDefinablyCompleteSubVectorSpace}. Let $c_n=w-a$ realise this maximum. By a simple calculation, one sees that $\Tilde{C}=\{c_0, \ldots, c_n\}$ is a separating basis of $\Tilde{V}=<V,a>$. Indeed, consider any element $b\in \Tilde{V}\setminus V$ and its decomposition $b=\sum_{i\leq n}b_ic_i$ in the basis $\Tilde{C}=\{c_0, \ldots, c_n\}$.  Notice that $\frac{\sum_{i<n}b_ic_i}{b_n}$ is an element of $V$. If $\val(\sum_{i<n}b_ic_i) \geq v(b_nc_n)$, then $\val(\frac{\sum_{i<n}b_ic_i}{b_n}+c_n)= v(c_n)$ by maximality of $\val(c_n)$, which gives $\val(\sum_{i<n}b_ic_i +b_nc_n)= \val(b_nc_n)$. If $\val(\sum_{i<n}b_ic_i) < v(b_nc_n)$, then $\val(\sum_{i<n}b_ic_i +b_nc_n)= \val(\sum_{i<n}b_ic_i)$. This proves that $\Tilde{C}=\{c_0, \ldots, c_n\}$ is a separating $\mathcal{K}$-basis of $\tilde{V}$.
    \end{proof}
    
    \subsubsection{Reduction to $\RV$}\label{SectionRV}
    In this paragraph, we reduce definability over sub-valued fields to the $\RV$-sort 
    for all benign theories of Henselian valued fields. The next two propositions consist simply of an $\RV$-version 
    of the theorem of Cubides-Delon. The proof is completely similar to that of $(2 \Rightarrow 1)$ in \cite[Theorem 1.9]{CD16}. Notice that the idea of adapting the proof of Cubides-Delon in the setting of an $\RV$-sort can already be found in Rideau-Kikuchi's thesis \cite{Rid14}. 
    Let $T$ be a benign theory of Henselian valued fields. Recall that we can safely work in the language $\mathrm{L}_{\RV}$ (or in an $\RV$-enrichment of it), by Remark \ref{RmkTeq}.
    {\begin{theorem} \label{PropRV} Let $\mathcal{L}/ \mathcal{K}$ be a separated extension with $\mathcal{L} \models T$. The following are equivalent:
        \begin{enumerate} \index{$\RV$-sort}\index{Stable embeddedness}
            \item $K$ is stably embedded (resp.\ uniformly stably embedded) in $\mathcal{L}$,
            \item $\RV_K$ is stably embedded (resp.\ uniformly stably embedded) in $\RV_L$.
        \end{enumerate}
    \end{theorem}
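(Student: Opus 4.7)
\medskip

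\noindent\textbf{Proof plan.} The plan is to handle $(1) \Rightarrow (2)$ by soft manipulations of stably embedded sorts, and to extract the real content in $(2) \Rightarrow (1)$ by combining $(\text{EQ})_{\RV}$ with the separatedness hypothesis in order to reduce every $\mathcal{L}$-definable subset of $K^n$ to a pullback of an $\RV_L$-definable subset of $\RV_K^t$.

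\medskip

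For $(1) \Rightarrow (2)$, I would argue as follows. Let $X \subseteq \RV_K^n$ be $\mathcal{L}$-definable. Since $T$ satisfies $(\text{SE})_{\RV}$ (Fact \ref{FactBenignTheories}), the sort $\RV_L$ is stably embedded in $\mathcal{L}$ with control of parameters, so one may assume $X$ is defined over $\RV_L$. Using the many-sorted hypothesis that $\mathcal{K} \subseteq^{st} \mathcal{L}$, the set $X$ (being a subset of an $\mathcal{K}$-sort) is in fact $\mathcal{K}$-definable. A second application of $(\text{SE})_{\RV}$, this time inside $\mathcal{K}$, lets us take the parameters from $\RV_K$. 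Uniformity is preserved throughout since at each step the outputted formula depends only on the input formula.

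\medskip

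For $(2) \Rightarrow (1)$, fix an $\mathrm{L}_{\RV}$-formula $\phi(x;\bar y)$ with $x$ a tuple of $K$-variables, and a parameter $\bar b \in L^{|\bar y|}$; one must show $\phi(K,\bar b)$ is $\mathcal{K}$-definable. By $(\text{EQ})_{\RV}$, $\phi(x,\bar b)$ is equivalent, for $x \in K$, to an $\RV$-formula $\psi$ applied to the leading terms $\rv(P_1(x,\bar b)), \dots, \rv(P_m(x,\bar b))$, where $P_i \in \mathbb{Z}[x,\bar y]$. Each $P_i(x,\bar b)$, viewed as a polynomial in $x$, has coefficients lying in a finite-dimensional $K$-subspace $V$ of $L$. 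By the separatedness hypothesis, $V$ admits a separating basis $\{d_1,\dots,d_s\}$; decomposing the coefficients and collecting, one obtains
\[ P_i(x,\bar b) \;=\; \sum_{k=1}^{s} Q_{ik}(x)\, d_k, \qquad Q_{ik} \in K[x]. \]
The defining property of a separating basis then yields, for every $x \in K$,
\[ \rv\bigl(P_i(x,\bar b)\bigr) \;=\; \bigoplus_{k=1}^{s} \rv(Q_{ik}(x))\cdot \rv(d_k), \]
so $\rv(P_i(x,\bar b))$ is an $\RV_L$-term in the $\RV_K$-values $\rv(Q_{ik}(x))$ and the fixed parameters $\rv(d_k)\in\RV_L$. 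Substituting back, $\phi(K,\bar b)$ is the preimage under the $\mathcal{K}$-definable map $x \mapsto (\rv(Q_{ik}(x)))_{i,k}$ of an $\RV_L$-definable subset $Y$ of $\RV_K^{ms}$. By the hypothesis $\RV_K \subseteq^{st} \RV_L$, the set $Y$ is $\RV_K$-definable, hence $\phi(K,\bar b)$ is $\mathcal{K}$-definable.

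\medskip

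The main obstacle is twofold. First, some care is needed to make the identity $\rv(\sum_k Q_{ik}(x) d_k) = \bigoplus_k \rv(Q_{ik}(x)) \rv(d_k)$ literally correct when some $Q_{ik}(x)$ vanish or when several valuations coincide; this is handled by the convention $\rv(0)=\mathbf{0}$ together with the multi-valued/ternary nature of $\oplus$ in $\mathrm{L}_{\RV}$, using that the definition of a separating basis ensures no ``hidden'' cancellation of leading terms. Second, for the uniform version one must verify that the data $(m,s,Q_{ik},\tilde\psi)$ produced depends uniformly on $\phi$ and not on $\bar b$: the relative quantifier-elimination output is uniform in $\bar b$, the dimension $s$ of $V$ is bounded by the number of coefficients appearing in the $P_i$'s (a bound depending only on $\phi$), and one partitions into the finitely many possible dimensions to obtain a uniform $\mathcal{K}$-formula. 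Uniform stable embeddedness of $\RV_K$ in $\RV_L$ then transfers to uniform stable embeddedness of $K$ in $\mathcal{L}$.
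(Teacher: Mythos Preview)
Your argument is essentially the paper's: the direction $(2)\Rightarrow(1)$ is identical (relative QE to $\RV$, decompose the $L$-coefficients in a separating basis, use that $\rv$ of a separating sum is the $\oplus$-sum of the $\rv$'s, then invoke $\RV_K\subseteq^{st}\RV_L$), and your remarks on the ternary $\oplus$ and on the uniformity bookkeeping are a welcome elaboration of what the paper leaves implicit.

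One caution on $(1)\Rightarrow(2)$. You write ``a second application of $(\text{SE})_{\RV}$, this time inside $\mathcal{K}$''. Taken literally this is a gap: the hypothesis is only $\mathcal{L}\models T$, and $\mathcal{K}$ need not be a model of $T$ (it could, for instance, be non-Henselian), so there is no reason $(\text{SE})_{\RV}$ holds for $\mathcal{K}$ as a structure in its own right. The correct move---and this is exactly what the paper does---is to stay in $\mathcal{L}$ and use $(\text{EQ})_{\RV}$ (equivalently, stable embeddedness of $\RV$ \emph{with control of parameters}): a formula $\psi(\mathbf{x},b)$ with $\RV$-variables $\mathbf{x}$ and field-sorted parameters $b\in K$ is $T$-equivalent to some $\theta(\mathbf{x},\rv(t(b)))$, and since $b\in K$ the new parameters $\rv(t(b))$ lie in $\RV_K$. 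With this adjustment your $(1)\Rightarrow(2)$ is correct and matches the paper's.
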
}
   The proof below use the property $(\text{EQ})_{\RV}$. 
    
    \begin{proof}
    We prove the non-uniform case, the uniform case being similar.\\  
        $(2 \Rightarrow 1)$ Let $\phi(x,a)$ be a formula with parameters in $L$, $x$ a tuple of field sorted variables. By relative quantifier elimination to $\RV$, it is equivalent to a formula of the form 
        $$\psi(\rv(P_0(x)),\ldots,\rv(P_{k-1}(x)),\mathbf{b}),$$
        where $\psi(\mathbf{x}_0,\ldots,\mathbf{x}_{k-1},\mathbf{y})$ is an $\RV$-formula, $P_i(x)$'s are polynomials with coefficients in $L$ and $\mathbf{b}\in \RV_L$. For instance, notice that the formula $P(x)=0$ for $P(X)\in L[X]$ is equivalent to $\rv(P(x))=0$.
    Consider $V$ the finite dimensional $K$-vector-space generated by the coefficients of the $P_i$'s, and let $c_0, \ldots, c_{n-1}$ be a separating basis of $V$. For $x\in K$, one has $P_i(x)= \sum_{j<n} P^j_i(x)c_j$ for some polynomials $P^j_i(x)\in K[x]$. 
    By definition of separating basis, one has for $x \in K:$ 
    \[\rv(P_i(x))= \bigoplus\limits_{j<n}\rv(P^j_i(x)c_j).\] 
    Hence, the trace in $K$ of the formula $\phi(x,a)$ is given by $\theta( (\rv(P^i_j(x)))_{i<k,j<n}, (\rv(c_j))_{j<n},\mathbf{b} )$, where $\theta$ is an $\RV$-formula. Now, using that $\RV_K$ is stably embedded in $\RV_L$, $\theta(\mathbf{x}, (\rv(c_j))_{j<n},\mathbf{b})$ can be replaced by a formula $\xi(\mathbf{x}, \mathbf{d})$ with parameters $\mathbf{d}$ in $\RV_K$, which induces the same set in $\RV_K$. At the end, we get that $\phi(K,a)$ is definable with parameters in $K$.\\
    
    $(1 \Rightarrow 2)$
    By relative quantifier elimination to $\RV$, if $\mathcal{K}$ is stably embedded in $\mathcal{L}$, so is $\RV_K$ in $\RV_L$. Indeed, consider a formula $\phi(\mathbf{x},\mathbf{a})$ with free variable $\mathbf{x}\in \RV$ and parameter $\mathbf{a}\in \RV_L$. As $K$ is stably embedded in $L$, there is an $\mathrm{L}(K)$-formula $\psi(\mathbf{x},b)$ with \textit{a priori} field-sorted parameters $b\in K$ such that $\psi(\RV_K,b)=\phi(\RV_K,\mathbf{a})$. By relative quantifier elimination to $\RV$, there is a field-sorted-quantifier-free formula $\theta(\mathbf{x},\mathbf{y})$ and field-sorted terms $t(y)$ such that 
    $\theta(\RV_K,\rv(t(b)))= \psi(\RV_K,b) = \phi(\RV_K,\rv(a)).$
    This concludes our proof.
    \end{proof}

    \begin{remark}\label{RemarkReductionToEnrichedRV}
    The proof above holds for any $\RV$-enrichment $T^e$ of $T$ in a language $\mathrm{L}^e_{\RV}$. Indeed $T$ has quantifier elimination relative to $\RV$ only if $T^{e}$ does (by \cite[Proposition A.9]{Rid17}). 
    \end{remark}

    \subsubsection{Angular component}\label{SubSecApplAC}
    Let $T$ be a benign theory of Henselian valued fields. We expand the language $\mathrm{L}_{\RV}$ with the sort $k$, $\Gamma$, $\val$, $\res$ and with a new function symbol $\ac: K \rightarrow k$ and consider the theory $T_{\ac}$ of the corresponding valued field with an $\ac$-map. This can be considered as an $\RV$-enrichment of $T$, as the map $\val$ and $\ac$ reduce to $\RV$ (see the diagram below).
    
        \[\xymatrix{
        1 \ar@{->}[r] & O^\times \ar@{->}[r]\ar@{->}[d]_{\res}& K^\star \ar@{->}[r]_\val\ar@{->}[d]_{\rv}\ar@/_1.0pc/@{->}[dl]^{\ac} & \Gamma\ar@{->}[r] \ar@{=}[d]&  0 \\
        1 \ar@{->}[r]&  k^\times\ar@{->}[r]  & \RV^{\star} \ar@/_1.0pc/@{->}[l]^{\ac_{\rv}} \ar@{->}[r]^{\val_{\rv}} & \Gamma \ar@{->}[r]  & 0 }\]  
        
    As Theorem \ref{PropRV} holds in $\RV$-enrichment, we deduce from it the following corollary:
            
    \begin{corollary} \label{CorGammakac}
        Let $\mathcal{L}$ be a model of $T_{\ac}$. \index{Angular component}
        Assume that $\mathcal{L}/\mathcal{K}$ is a separated extension. The following are equivalent:
        \begin{enumerate}
            \item $K$ is stably embedded (resp.\ uniformly stably embedded) in $\mathcal{L}$,
            \item $k_K$ is stably embedded (resp.\ uniformly stably embedded) in $k_L$ and $\Gamma_K$ is stably embedded (resp.\ uniformly stably embedded) in $\Gamma_L$
        \end{enumerate}
    \end{corollary}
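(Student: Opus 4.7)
The plan is to deduce Corollary \ref{CorGammakac} from Theorem \ref{PropRV} together with the orthogonality/purity machinery of Subsection \ref{SubsectionPreliminariesAbelianGroups}. First, I would observe that $T_{\ac}$ is literally an $\RV$-enrichment of $T$: the angular component $\ac:K\to k$ factors through $\rv$ as $\ac=\ac_{\rv}\circ\rv$, where $\ac_{\rv}:\RV^\star\to k^\times$ is the splitting section of the short exact sequence $1\to k^\times\to \RV^\star\xrightarrow{\val_{\rv}} \Gamma\to 0$; likewise $\val$ and $\res$ factor through $\rv$. Hence, by Remark \ref{RemarkReductionToEnrichedRV}, Theorem \ref{PropRV} applies verbatim to $T_{\ac}$ and yields the first reduction: for the separated extension $\mathcal{L}/\mathcal{K}$, we have that $K$ is (uniformly) stably embedded in $\mathcal{L}$ if and only if $\RV_K$ is (uniformly) stably embedded in $\RV_L$.

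The second step is to translate stable embeddedness of $\RV_K$ in $\RV_L$ into the joint condition on $k$ and $\Gamma$. Because $\ac_{\rv}$ witnesses a splitting, $k^\times$ is a pure subgroup of $\RV^\star$, and the sequence $1\to k^\times\to \RV^\star \to \Gamma \to 0$ is a pure short exact sequence enriched with the full field structure on $k$ and the full ordered-group structure on $\Gamma$; this is precisely the $\{A\}$-$\{C\}$-enrichment setting of Subsection \ref{SubsectionPreliminariesAbelianGroups} with $A=k^\times$, $B=\RV^\star$, $C=\Gamma$. Corollary \ref{CoroPureOrth} (itself a consequence of Fact \ref{FactACGZ}) then asserts that $k$ and $\Gamma$ are pure, stably embedded and orthogonal inside $\RV$. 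Orthogonality combined with stable embeddedness of the two factors immediately gives that $\RV_K$ is (uniformly) stably embedded in $\RV_L$ if and only if both $k_K$ is (uniformly) stably embedded in $k_L$ and $\Gamma_K$ is (uniformly) stably embedded in $\Gamma_L$.

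The main point to verify is that the $T_{\ac}$-induced structure on $\RV$ coincides, up to the permitted $\{k\}$-$\{\Gamma\}$-enrichment, with the structure of a split pure short exact sequence considered in Subsection \ref{SubsectionPreliminariesAbelianGroups}; in particular, one has to check that the ternary predicate $\oplus$ becomes definable from the projections $\ac_{\rv}$ and $\val_{\rv}$, so that it is absorbed into the $\{k\}$-$\{\Gamma\}$-enrichment and is therefore harmless with respect to Corollary \ref{CoroPureOrth}. Once this is granted, the non-uniform and uniform implications in the two steps fit together transparently, and no further argument is required.
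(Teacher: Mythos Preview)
Your outline is correct and close in spirit to the paper's proof, but the second step invokes the wrong lemma and glosses over the actual work. The paper does not go through the short-exact-sequence machinery of Subsection~\ref{SubsectionPreliminariesAbelianGroups} here; instead it observes that, once $\ac_{\rv}$ is present, the enriched $\RV$-structure is \emph{literally} the product structure $k\times\Gamma$ (the multiplication on $\RV^\star$ and the inclusion $\iota:k^\times\hookrightarrow\RV^\star$ are definable from the two projections $\ac_{\rv}$ and $\val_{\rv}$), and then applies Lemma~\ref{LemmaProductStEmbSubstruc}. This is lighter than Fact~\ref{FactACGZ} and avoids any discussion of the maps $\rho_n$.

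The gap in your version is the sentence ``Orthogonality combined with stable embeddedness of the two factors immediately gives\dots''. Corollary~\ref{CoroPureOrth} only tells you that $k$ and $\Gamma$ are pure, stably embedded and orthogonal \emph{as definable sorts} inside the monster; it does not by itself yield the transfer for \emph{substructures} $\RV_K\subseteq^{st}\RV_L \Leftrightarrow (k_K\subseteq^{st}k_L \text{ and } \Gamma_K\subseteq^{st}\Gamma_L)$. That transfer is exactly the content of Lemma~\ref{LemmaProductStEmbSubstruc} in the split case, or of Proposition~\ref{ProAbeGrp}/Corollary~\ref{CorollaryStablyEmbeddedSubShortExactSequence} in the general case (and for the latter you would still need to check condition~(\ref{EquationrhonBpinA}), which here follows because the splitting $\ac_{\rv}$ restricts to $\RV_K^\star\to k_K^\times$). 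So either replace your appeal to Corollary~\ref{CoroPureOrth} by a direct appeal to Lemma~\ref{LemmaProductStEmbSubstruc} after noting the sequence splits, or cite Proposition~\ref{ProAbeGrp} and verify its hypotheses; as written, the justification is incomplete.
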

    The proof is straightforward, once we prove Lemma \ref{LemmaProductStEmbSubstruc}. Again, it uses Property $(\text{EQ})_{\RV}$ but Property $(\text{Im})$ is not required.

    Given two structures $\mathcal{H}$ and $\mathcal{K}$ (possibly in a different language) with base set $H$ and $K$, we defined the product structure $\mathcal{H}\times \mathcal{K}$ as the three-sorted structure \index{Product of structures} 
    \[\mathcal{H}\times \mathcal{K}= \{H\times K, \mathcal{H},\mathcal{K} \} \cup \{\pi_H: H\times K \rightarrow H,\pi_K: H\times K \rightarrow K\}\]
    where the function symbols $\pi_H$ and $\pi_K$ are interpreted by the canonical projections (for relative quantifier elimination, orthogonality and stable embeddedness, see \cite{Tou20a}).
    One sees that if $H$ and $K$ are two stably embedded and orthogonal definable sets in a structure $\mathcal{M}$, then the product $H \times K$ in $M^2$ with the full induced structure over parameters is isomorphic to $\mathcal{H}\times \mathcal{K}$, where  $\mathcal{H}$ (resp.\ $\mathcal{K}$) is the set $H$ (resp.\ $K$) endowed with its induced structure.

    \begin{lemma}\label{LemmaProductStEmbSubstruc} Let $\mathcal{H}_1$ (resp.\ $\mathcal{K}_1$) be a substructure of a structure $\mathcal{H}_2$ (resp.\ $\mathcal{K}_2$). Then $\mathcal{H}_1\times \mathcal{K}_1$ is a substructure of $\mathcal{H}_2\times \mathcal{K}_2$ and we have:
    \begin{itemize}
        \item $\mathcal{H}_1\times \mathcal{K}_1 \subseteq ^{st} \mathcal{H}_2\times \mathcal{K}_2$ if and only if $\mathcal{H}_1\subseteq ^{st} \mathcal{H}_2$ and $\mathcal{K}_1\subseteq ^{st} \mathcal{K}_2$.
        \item $\mathcal{H}_1\times \mathcal{K}_1 \subseteq ^{ust} \mathcal{H}_2\times \mathcal{K}_2$ if and only if $\mathcal{H}_1\subseteq ^{ust} \mathcal{H}_2$ and $\mathcal{K}_1\subseteq ^{ust} \mathcal{K}_2$.
    \end{itemize}
    \end{lemma}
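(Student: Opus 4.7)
The substructure claim is immediate: $H_1 \times K_1 \subseteq H_2 \times K_2$ as sets, the factors embed as substructures in their own languages, and the projection symbols $\pi_H, \pi_K$ restrict correctly.

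The main input is the structural description of $\mathcal{H} \times \mathcal{K}$ cited from \cite{Tou20a} in the paragraph preceding the lemma: the sorts $H$ and $K$ are pure, stably embedded and orthogonal in the product. Concretely, by using the projections to replace main-sort variables with pairs of $H$- and $K$-sorted variables, every $\mathrm{L}$-formula $\phi(x,y,z^H,z^K)$ (with $x$ from $H$, $y$ from $K$, and parameter slots split accordingly) is equivalent, \emph{uniformly} in the parameters, to a Boolean combination
\[
\bigvee_{j < N} \phi_j^H(x,z^H) \wedge \phi_j^K(y,z^K),
\]
where $\phi_j^H$ is an $\mathcal{H}$-formula and $\phi_j^K$ a $\mathcal{K}$-formula.

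Granted this decomposition, direction $(\Leftarrow)$ is direct: given a formula in the product over $c=(c^H,c^K) \in H_2 \times K_2$, decompose as above, then replace each $\phi_j^H(\cdot,c^H)$ by an $\mathcal{H}_1$-formula with parameter in $H_1$ (by stable embeddedness in the $H$-factor) and each $\phi_j^K(\cdot,c^K)$ by a $\mathcal{K}_1$-formula with parameter in $K_1$, and reassemble into a single formula over $H_1 \times K_1$ with the same trace on $(H_1 \times K_1)^{|x|+|y|}$. For $(\Rightarrow)$: given an $\mathcal{H}_2$-formula $\phi^H(x,a)$ with $a\in H_2$, view it in the product; stable embeddedness of $\mathcal{H}_1 \times \mathcal{K}_1$ yields a product-formula $\psi(x,c)$ with $c \in H_1 \times K_1$ whose trace on $H_1^{|x|}$ matches. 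Since the only free variables of $\psi$ are of sort $H$, the decomposition specialises to $\bigvee_j \psi_j^H(x,c^H) \wedge \chi_j(c^K)$ where each $\chi_j(c^K)$ is a sentence. Retaining only the disjuncts with $\mathcal{K}_1 \models \chi_j(c^K)$ gives an $\mathcal{H}_1$-formula over the parameter $c^H \in H_1$ defining the trace, so $\mathcal{H}_1 \subseteq^{st} \mathcal{H}_2$; the argument for $\mathcal{K}_1 \subseteq^{st} \mathcal{K}_2$ is symmetric.

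The delicate point — and the only step requiring real bookkeeping — is the uniform version of $(\Rightarrow)$: one must absorb the finite case analysis ``which $\chi_j(c^K)$ hold'' into a \emph{single} uniform formula over $\mathcal{H}_1$. Since there are at most $2^N$ subsets $J \subseteq \{0,\ldots,N-1\}$ and only finitely many of them can occur as $a$ varies, this is done by enlarging $\psi$ with an auxiliary tuple of ``flag'' parameters (taken from any sort in $\mathcal{H}_1$ or $\mathcal{K}_1$ containing at least two distinct elements; degenerate cases are trivial) that encode the relevant subset $J$. This absorbs the case analysis into one formula and upgrades both implications from the non-uniform to the uniform setting.
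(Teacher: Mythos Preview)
Your proof is correct and follows essentially the same approach as the paper: both directions rest on the rectangle decomposition coming from relative quantifier elimination in the product (purity and orthogonality of the two factors), and the non-uniform argument is line-for-line the same. The one genuine difference is the uniform case: you handle it by explicit bookkeeping with flag parameters to absorb the finitely many truth-value patterns of the $\chi_j(c^K)$, whereas the paper simply remarks that the uniform case either goes through \emph{mutatis mutandis} or follows from the non-uniform case by passing to a sufficiently saturated elementary extension of the pair $(\mathcal{H}_2\times\mathcal{K}_2,\mathcal{H}_1\times\mathcal{K}_1)$ in the language of pairs. The saturation trick is cleaner and avoids the case split entirely (and sidesteps the mild awkwardness of needing two distinct elements in a sort to encode flags), but your explicit argument is perfectly valid.
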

    \begin{proof}
    The fact that $\mathcal{H}_1\times \mathcal{K}_1$ is a substructure of $\mathcal{H}_2\times \mathcal{K}_2$ is obvious. We prove the non-uniform stable embeddedness transfer. The uniform case can be proved similarly, or can be deduced from the non-uniform case by considering a saturated enough extension in the language of pairs.
    First, we prove the left-to-right implication. This is almost a consequence of purity, but one needs control over the parameters, which is given by relative quantifier elimination. Assume $\mathcal{H}_1\times \mathcal{K}_1 \subseteq^{st} \mathcal{H}_2\times \mathcal{K}_2$. Let $\phi(x_H)$ be a $H_2$-formula, with $x_H$ a tuple of $H$-sorted variables. We identify the formula $\phi(x_H)$ with the formula $\phi(\pi_H(x_{H,K}))$  where $x_{H,K}$ is a variable in $H\times K$(in other words, $\phi(H_2)$ is identified with $\phi(H_2)\times K_2$). By assumption, there is a formula $\psi(x_{H,K},b)$ with parameters $b$ in $H_1 \times K_1$ such that $\psi(H_1\times K_1,b)= \phi(H_1)\times K_1$. By relative quantifier elimination, we may assume that $\psi(x_{H,K},b)$ is a disjunction of formulas of the form:
    \[ \psi_H(\pi_H(x_{H,K}), \pi_H(b)) \wedge \psi_K(\pi_K(x_{H,K}), \pi_K(b))\]
    where $\psi_H(x_H,y_H)$ is a $H$-formula and $\psi_K(x_K,y_K)$ is a $K$-formula.
    It follows that $\phi(H_1)\times K_1$ is a union of rectangles of the forms $\psi_H(H_1, \pi_H(b)) \times  \psi_K(K_1,\pi_K(b))$. The union of the $H$-side gives a formula $\theta(x_H,\pi_H(b))$ with parameters in $H_1$ such that $\theta(H_1,\pi_H(b))=\phi(H_1)$. This shows that $\mathcal{H}_1$ is stably embedded in $\mathcal{H}_2$ and similarly that $\mathcal{K}_1$ is stably embedded in $\mathcal{K}_2$. \\
    Assume $\mathcal{H}_1 \subseteq^{st} \mathcal{H}_2$ and $\mathcal{K}_1 \subseteq^{st} \mathcal{K}_2$. Let $\phi(x_{H,K})$ be a $H_2\times K_2$-formula. Again, by relative quantifier elimination, it is a finite union of rectangles, where the $H$-side has parameters in $H_2$ and the $K$-side has parameters in $K_2$. The trace in $H_1$ of each $H$-side is given by a formula with parameters in $H_1$. Similarly for the $K$-side. At the end, it gives a $H_1\times K_1$-formula $\psi(x_{H,K})$ such that $\psi(H_1\times K_1)=\phi(H_1\times K_1)$.
    \end{proof}
    
    \begin{proof}[Proof of Corollary \ref{CorGammakac}]
    By Theorem \ref{PropRV}, or more precisely by Remark \ref{RemarkReductionToEnrichedRV}, $K$ is stably embedded (resp.\ uniformly stably embedded) in $\mathcal{L}$ if and only if      $\RV_K$ is stably embedded (resp.\ uniformly stably embedded) in the enriched $\RV$-structure 
    \begin{align*}
    \big((\RV_L^\star,\textbf{1},\cdot),(k_L,0,1,+,\cdot),(\Gamma_L,0,+,<),\ac_{\rv}: \RV_L^\star \rightarrow k_L^\star,\\
    \iota: k_L^\star \rightarrow \RV_L^\star, \val_{\rv}:\RV_L^\star \rightarrow \Gamma_L\big).
    \end{align*}
    Notice that, in terms of structure, the injection $\iota: k_L^\star \rightarrow \RV_L^\star$ is superfluous, as is the multiplicative law in $\RV_L^\star$, since the graphs are respectively given by \[\{(a,b)\in k^\star\times \RV^\star \ \vert \ \ac_{\rv}(b)=a \wedge \val_{\rv}(b)=0\}\]
    and 
    \begin{align*}
    \{ ((b_1,b_2),b_3) \in \RV^2\times \RV \ \vert \ &\ac_{\rv}(b_1)\times \ac_{\rv}(b_2)=\ac_{\rv}(b_3) \wedge \\
    &\val_{\rv}(b_1)+ \val_{\rv}(b_2)=\val_{\rv}(b_3) \}.
    \end{align*}
    In other word, $\RV_L$ is exactly the product structure $k_L \times \Gamma_L$: \[\left(\RV_L,(k_L,0,1,+,\cdot),(\Gamma_L,0,+,<),\ac_{\rv}: \RV_L^\star \rightarrow k_L^\star, \val_{\rv}:\RV_L^\star \rightarrow \Gamma_L\right).\]
    Then the corollary is a direct consequence of Lemma \ref{LemmaProductStEmbSubstruc}. \end{proof}

    \subsubsection{Reduction to $\Gamma$ and $k$}\label{SectionGammaK}
    We reduce definability of types over a submodel in $\RV$ to the corresponding conditions in the value group and residue field. We consider the multisorted structure $(\RV_K,\Gamma_K,k_K)$ and its (not necessary elementary) extension $(\RV_L,\Gamma_L,k_L)$. We are going to show that, under some reasonable conditions, $\RV_K$ is stably embedded (resp.\ uniformly stably embedded) in $\RV_{L}$ if and only if $k_K$ is stably embedded (resp.\ uniformly stably embedded) in $k_L$ and  $\Gamma_K$ is stably embedded (resp.\ uniformly stably embedded) in $\Gamma_L$. Recall once again that, if $(\RV,\Gamma,k)$ is an $\RV$-structure, we have the following short exact sequence:
    \[ \xymatrix{1 \ar[r] & k^{\star}\ar[r]& \RV^\star \ar[r] & \Gamma\ar[r] &0}. \]
    It will be seen as a sequence of enriched abelian groups. We will use the quantifier elimination result for short exact sequences of abelian groups due to Aschenbrenner, Chernikov, Gehret and Ziegler (see Paragraph \ref{SubsectionPreliminariesAbelianGroups}). Also, to fit with their notations, let us work in a more general context. Assume we have a (possibly $\{A\}-\{C\}$-enriched) short exact sequence of abelian groups
    \[ \xymatrix{0 \ar[r] & A \ar[r]^{\iota}& B \ar[r]^{\nu} & C\ar[r] &0}, \]
    where $\iota(A)$ is a pure subgroup of $B$. 
    As in Paragraph \ref{SubsectionPreliminariesAbelianGroups}, we see it as an $\mathrm{L}$-structure (resp.\ an $\mathrm{L}_q$ structure), and we denote by $T$ (resp.\ $T_q$) its theory. Let us insist that $A$ and $C$ can be endowed with extra structure.
    By 'the sort $A$' and 'an $A$-formula', we abusively mean respectively the union of sort $\bigcup_{n<\omega}A/nA$ and a  $\bigcup_{n<\omega}A/nA$-formula (so with potentially variables and parameters in $A/nA$).

    We consider the question of stable embeddedness of a sub-short-exact-sequence $\mathcal{M}$ of a model $\mathcal{N}\models T$. We denote by $\bigcup_{n<\omega}\rho_n(\mathcal{M})$ the union of the images of $\mathcal{M}$ in $\bigcup_{n<\omega}A(\mathcal{N}) / nA(\mathcal{N})$ under the maps $\rho_n^{\mathcal{N}}$'s. Notice that it should not be confused with $\bigcup_{n<\omega}A(\mathcal{M}) / nA(\mathcal{M})$.
    
    \begin{proposition} \label{ProAbeGrp} \index{Pure! short exact sequence of abelian groups} \index{Stable embeddedness}
        Let $\mathcal{N}$ be models of $T$  and $\mathcal{M} \subseteq \mathcal{N}$ a sub-short exact sequence of groups. We have:
        \begin{itemize}
        	\item  $\mathcal{M} \subseteq^{st} \mathcal{N}$ if and only if $\bigcup_{n<\omega}\rho_n(\mathcal{M}) \subseteq^{st}  \bigcup_{n<\omega}A/nA(\mathcal{N})$ and $C(\mathcal{M}) \subseteq^{st} C(\mathcal{N})$   and
        	\item  $\mathcal{M} \subseteq^{ust} \mathcal{N}$ if and only if $\bigcup_{n<\omega}\rho_n(\mathcal{M}) \subseteq^{ust}  \bigcup_{n<\omega}A/nA(\mathcal{N})$ and $C(\mathcal{M}) \subseteq^{ust} C(\mathcal{N})$.
\end{itemize}        
    \end{proposition}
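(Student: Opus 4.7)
The plan is to apply the quantifier-elimination result of Aschenbrenner--Chernikov--Gehret--Ziegler (Fact \ref{FactACGZ}) in order to reduce every assertion about the $B$-sort of a pure short exact sequence to assertions about the $A$-sort and the $C$-sort separately, then combine this with the orthogonality and stable embeddedness of $<A>$ and $<C>$ inside $\mathcal{N}$ (Corollary \ref{CoroPureOrth}). The statement is essentially a ``factorisation'' of stable embeddedness through the ACGZ normal form.

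For the direction $(\Leftarrow)$, I would start with an arbitrary $\mathrm{L}_q(\mathcal{N})$-formula $\phi(x;b)$, whose variables $x$ may range over any combination of the sorts and whose parameters $b$ lie in $\mathcal{N}$. Applying Fact \ref{FactACGZ} rewrites $\phi(x;b)$ as a Boolean combination of $C$-formulas $\phi_C^{(i)}(\nu(t_i(x;b)))$ and $A$-formulas $\phi_A^{(j)}(\rho_{n_j}(t_j(x;b)))$. The parameters on the $C$-side come from $\nu$-images of the $B$-parameters in $b$ together with any native $C$-parameters, so they live in $C(\mathcal{N})$; the parameters on the $A$-side come from $\rho_n$-images of the $B$-parameters in $b$ together with native $A/nA$-parameters, and hence live in $\bigcup_n \rho_n(\mathcal{N})$. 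Applying the two stable embeddedness hypotheses separately, each conjunct can be re-expressed with parameters in $C(\mathcal{M})$ respectively in $\bigcup_n \rho_n(\mathcal{M})$, and reassembling the Boolean combination produces an $\mathcal{M}$-definable set equal to the trace $\phi(\mathcal{M};b)$.

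For the direction $(\Rightarrow)$, I would treat the two quotients independently. Given a $C$-formula $\psi(y;c)$ with $c \in C(\mathcal{N})$, surjectivity of $\nu$ lets me lift $c = \nu(b)$ for some $b \in B(\mathcal{N})$, and then view $\psi$ as a formula with parameter $b$ in $\mathcal{N}$. Stable embeddedness of $\mathcal{M}$ in $\mathcal{N}$ yields an $\mathcal{M}$-definable replacement; applying Fact \ref{FactACGZ} to this replacement and invoking orthogonality of $<A>$ and $<C>$ (the $A$-side of the decomposition contributes nothing to the definition of a $C$-valued set), I extract a pure $C$-formula with parameters in $\nu(B(\mathcal{M})) \subseteq C(\mathcal{M})$. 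The case of $\bigcup_n \rho_n$ is symmetric: every $\bar a \in A(\mathcal{N})/nA(\mathcal{N})$ equals $\rho_n^{\mathcal{N}}(\iota(a'))$ for any lift $a'$ of $\bar a$, so $A/nA$-parameters can always be realised as $\rho_n$-images of $B(\mathcal{N})$-elements; after reducing via Fact \ref{FactACGZ} and orthogonality, the $A$-side parameters of the new formula are of the form $\rho_n(\text{term in } B(\mathcal{M}))$ and therefore lie in $\bigcup_n \rho_n(\mathcal{M})$, as required.

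The main technical obstacle I anticipate is exactly this parameter-tracking issue in $(\Rightarrow)$: I need the $A$-side parameters of the reduced formula to land specifically in $\bigcup_n \rho_n(\mathcal{M})$ rather than only in the a priori larger set $\bigcup_n A(\mathcal{M})/nA(\mathcal{M})$. This is the reason for stating the proposition with $\bigcup_n \rho_n(\mathcal{M})$ rather than the naive quotient, and it is precisely guaranteed by the shape of the ACGZ normal form, in which $\rho_n$ is only applied to group terms involving the original $B$-sorted variables and parameters. The uniform case requires no new ideas: Fact \ref{FactACGZ} and Corollary \ref{CoroPureOrth} are uniform in parameters, so the decomposition-and-recombination argument transports uniform stable embeddedness in both directions.
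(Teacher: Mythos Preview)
Your overall plan and the $(\Rightarrow)$ direction are essentially the paper's argument. The $(\Leftarrow)$ direction, however, has a genuine gap.

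You assert that after the ACGZ reduction ``the parameters on the $A$-side come from $\rho_n$-images of the $B$-parameters in $b$''. This tacitly assumes that in a term $\rho_{n_0}(t(x,b)) = \rho_{n_0}(n\cdot x + m\cdot b)$ one can separate the variable part from the parameter part, so that the latter becomes an element of $\bigcup_n A/nA(\mathcal{N})$ to which the hypothesis $\bigcup_n\rho_n(\mathcal{M}) \subseteq^{st} \bigcup_n A/nA(\mathcal{N})$ applies. But $\rho_n$ is \emph{not} a group homomorphism: it is defined piecewise, sending $b$ to $\iota_n^{-1}(b + nB)$ on $\nu^{-1}(nC)$ and to $0$ elsewhere. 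Hence $\rho_{n_0}(n\cdot x + m\cdot b)$ is not $\rho_{n_0}(n\cdot x) + \rho_{n_0}(m\cdot b)$ in general. Without this separation you cannot invoke the $A$-side stable-embeddedness hypothesis at all, since the free variable $x$ is still $B$-sorted and the whole expression $\rho_{n_0}(n\cdot x + m\cdot b)$ depends jointly on $x$ and $b$. (The analogous step on the $C$-side is unproblematic precisely because $\nu$ \emph{is} a homomorphism.)

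The paper fills this gap by a case split. If $\rho_{n_0}(n\cdot g + m\cdot b) = 0$ for every $g\in B(\mathcal{M})$, the term is replaced by $0$ when restricting $x$ to $\mathcal{M}$. Otherwise one fixes some $g\in B(\mathcal{M})$ with $\nu(n\cdot g + m\cdot b)\in n_0 C$; on the ($C$-definable) locus $\nu(n\cdot x + m\cdot b)\in n_0 C$ the restricted map is additive, giving
\[
\rho_{n_0}(n\cdot x + m\cdot b) \;=\; \rho_{n_0}\bigl(n\cdot x + (-n)\cdot g\bigr) \;+\; \rho_{n_0}(n\cdot g + m\cdot b),
\]
where the first summand is a term with parameter $g\in B(\mathcal{M})$ only and the second is a fixed element of $A/n_0A(\mathcal{N})$. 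Now the $A$-side hypothesis genuinely applies. This separation step is the actual content of the $(\Leftarrow)$ argument; once you supply it, your proof coincides with the paper's.
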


    \begin{proof}
    We prove only the non-uniform case. We prove the right-to-left implication first. 
    
    Let $x=(x_0,\ldots,x_{k-1})$ be a tuple of variables in the sort $B$. A term $t(x,b)$ with $b\in B(\mathcal{N})^l$ is of the form $n\cdot x+m\cdot b$ for $n\in \mathbb{N}^k, m\in \mathbb{N}^l$, where $n \cdot x= n_0x_0 + \cdots + n_{k-1}x_{k-1}$.
    By quantifier elimination, it is enough to check that the following formulas define on $M$ some $M$-definable sets:
    \begin{enumerate}
        \item $\phi_C(\nu(t_0(x)),\ldots,\nu(t_{s-1}(x)),c)$ where $t_i(x)$'s are terms with parameters in $B(\mathcal{N})$, $c\in C(\mathcal{N})$ and $\phi_C$ is a $C$-formula,
        \item $\phi_{A}(\rho_{n_0}(t_0(x)),\ldots,\rho_{n_{s-1}}(t_{s-1}(x)),a)$ where $t_i(x)$'s are terms with parameters in $B(\mathcal{N})$, $a\in \bigcup_{n<\omega}A/nA(\mathcal{N})$. 
    \end{enumerate}

    (1) We have $t_l(x)=n_l\cdot x+m_l\cdot b$ where $b$ is some tuple of parameters in $B(\mathcal{N})$. Write $\nu(n\cdot x + m \cdot b))=\nu(n\cdot x)+ \nu(m\cdot b)$. Then $\nu(m\cdot b)$ is a parameter from $C(\mathcal{N})$ and one just needs to apply $C(\mathcal{M}) \subseteq^{st} C(\mathcal{N})$.\\
    (2) Assume $t_0(x)=n\cdot x + m\cdot b$. If for all $g\in B(\mathcal{M})$, $\rho_{n_0}( n \cdot g + m \cdot b)=0$, replace all occurrences of $t_0(x)$ by $0$. Otherwise, for some $g\in B(\mathcal{M})$, we have $\nu(n \cdot g + m \cdot b)\in n_0C$. If  $\nu(n \cdot x + m \cdot b) \in n_0C$, one can write $\rho_{n_0}(n \cdot x + m \cdot b)=\rho_{n_0}(n\cdot x + (-n)\cdot g)+\rho_{n_0}(n \cdot g + m \cdot b)$. The formula is equivalent to 
    \begin{align*}
        \bigg( (\exists y_C \ \nu(n \cdot x+ m \cdot b) = n_0y_C ) \ \wedge \ \phi_{A}(&\rho_{n_0}(n\cdot x + (-n)\cdot g)+\\
        &\rho_{n_0}(n \cdot g + m \cdot b),\ldots,\rho_{n_{s-1}}(t_{s-1}(x)),a) \bigg ) \\
    \vee  \bigg( \neg\exists y_C \ \nu(n \cdot x + m \cdot b) = n_0y_C \wedge \ \phi_{A,n}(&0,\rho_n(t_{1}(x)),\ldots,\rho_n(t_{s-1}(x)),a) \bigg)    
    \end{align*} 
    
    Now $n \cdot x + (-n)\cdot g$ is a term with parameters in $B(\mathcal{M})$ and $\rho_n(n\cdot g + m \cdot b)$ is a parameter in $A/nA(\mathcal{N})$. We proceed similarly for all other terms $ \rho_{n_i}(t_i(x))$, $0<i<s$. As $\bigcup_{n<\omega}\rho_n(\mathcal{M}) \subseteq^{st} \bigcup_{n<\omega}A/nA(\mathcal{N})$, we conclude that the trace in $M$ of the initial formula $\phi_{A}(\rho_{n_0}(t_0(x)),\ldots,\rho_{n_{s-1}}(t_{s-1}(x)),a)$ is the trace in $M$ of a formula with parameters in $M$.
    
    It remains to prove the left-to-right implication. This is almost a consequence of purity of the sorts $A$ and $C$, but one needs control over the set of parameters (we use implicitly \cite[Definition 1.8]{Tou20a}). Let $\phi_A(x_A,a)$ be an $A$-formula with parameters $a\in \bigcup_{n<\omega} A/nA(\mathcal{N})$. By stable embeddedness of $M$ in $\mathcal{N}$, there is an $\mathrm{L}_q$-formula $\psi(x_A,a,b,c)$ with parameters $a,b,c\in A(M)B(M)C(M)$ defining the same set on $A(M)$. As in the proof of Corollary \ref{CoroPureOrth}, we may assume that $\psi(x_A,y_A,y_B,y_C)$ is of the form:
    \[\psi_A(x_A,y_A,\rho_n(t(y_B)))\]
    where $\psi_A$ is an $A$-formula, $t(y_B)$ is a tuple of group terms (and with no occurrence of the variable $y_C$ or of the function symbol $\iota$). 
    This proves that $\bigcup_{n<\omega} \rho_n(M)$ is stably embedded in $\bigcup_{n<\omega} A/nA(N)$. Similarly, we prove that $C(M)$ is stably embedded in $C(\mathcal{N})$.
     \end{proof}
    
    There is a particular case when the condition $\bigcup_{n<\omega}\rho_n(\mathcal{M}) \subseteq^{st}  \bigcup_{n<\omega}A/nA(\mathcal{N})$ (resp.\ $\bigcup_{n<\omega}\rho_n(\mathcal{M}) \subseteq^{ust}  \bigcup_{n<\omega}A/nA(\mathcal{N})$) is equivalent to $A(\mathcal{M}) \subseteq^{st} A(\mathcal{N})$ (resp.\ $A(\mathcal{M}) \subseteq^{ust} A(\mathcal{N}))$, namely when 
    \begin{equation}
        \rho_n(B(\mathcal{M}))= \pi_n(A(\mathcal{M}))  \text{ for all $n\geq 1$.} \label{EquationrhonBpinA}
    \end{equation}
    (notice that the equality $\rho_0(B(\mathcal{M}))= \pi_0(A(\mathcal{M}))= A(\mathcal{M})$ always holds). Indeed,  by bi-interpretability and Remark \ref{RmkTeq}, we have that $A(\mathcal{M}) \subseteq^{st} A(\mathcal{N})$ is equivalent to $\bigcup_{n<\omega}\rho_n(\mathcal{M}) \subseteq^{st}  \bigcup_{n<\omega}A/nA(\mathcal{N})$.
    
    \begin{remark}
    The following conditions implies the condition (\ref{EquationrhonBpinA}):
    \begin{itemize}
        \item $C(\mathcal{M})$ is pure in $C(\mathcal{N})$ (this holds in particular when $\mathcal{M}$ is an elementary submodel of $\mathcal{N}$),
        \item $A(\mathcal{N})/nA(\mathcal{N})$ is trivial for all $n\geq 1$. 
    \end{itemize}
    
    \end{remark}
    The second point is obvious. 
    Let us show that the first one implies (\ref{EquationrhonBpinA}). If $b \in B$ is such that $\rho_n^{\mathcal{N}}(b) \neq 0$, then by definition $\nu(b) \in nC(\mathcal{N})$. By purity, $\nu(b) \in nC(\mathcal{M})$. Then there is $a \in A(\mathcal{M})$ such that $\iota(a)+nB(\mathcal{M})= b + nB(\mathcal{M})$. In particular, $\iota(a)+nB(\mathcal{N})= b + nB(\mathcal{N})$, which means that $\pi_n(a)=\rho_n(b)$. We have showed that $\rho_n(B(\mathcal{M}))= \pi_n(A(\mathcal{M}))$. We conclude by Remark \ref{RmkTeq} that $A(\mathcal{M}) \subseteq^{st} A(\mathcal{N})$ implies $\bigcup_{n<\omega}\rho_n(\mathcal{M}) \subseteq^{st}  \bigcup_{n<\omega}A/nA(\mathcal{N})$.

    We get the following:
    \begin{corollary}\label{CorollaryStablyEmbeddedSubShortExactSequence}
        Let $\mathcal{N}$ be model of $T$  and let $\mathcal{M} \subseteq \mathcal{N}$ be a sub-short-exact-sequence. Assume either 
        \begin{itemize}
            \item that $C(\mathcal{M})$ is a pure subgroup of $C(\mathcal{N})$,
            \item or that $A(\mathcal{N})/nA(\mathcal{N})$ is trivial for all $n\geq 1$.
       \end{itemize}
        Then, we have:
        \begin{itemize}
        	\item  $\mathcal{M} \subseteq^{st} \mathcal{N}$ if and only if $A(\mathcal{M}) \subseteq^{st}  A(\mathcal{N})$ and $C(\mathcal{M}) \subseteq^{st} C(\mathcal{N})$   and
        	\item  $\mathcal{M} \subseteq^{ust} \mathcal{N}$ if and only if $A(\mathcal{M}) \subseteq^{ust}  A(\mathcal{N})$ and $C(\mathcal{M}) \subseteq^{ust} C(\mathcal{N})$.
\end{itemize}
    \end{corollary}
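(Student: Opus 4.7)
The plan is to derive the corollary as an essentially immediate packaging of Proposition \ref{ProAbeGrp} with the equality (\ref{EquationrhonBpinA}) discussed just above the statement. By Proposition \ref{ProAbeGrp}, stable embeddedness (resp. uniform stable embeddedness) of $\mathcal{M}$ in $\mathcal{N}$ already decomposes into the conjunction $\bigcup_{n<\omega}\rho_n(\mathcal{M}) \subseteq^{st} \bigcup_{n<\omega}A/nA(\mathcal{N})$ and $C(\mathcal{M}) \subseteq^{st} C(\mathcal{N})$ (and similarly for $\subseteq^{ust}$). Since the second conjunct is already in the desired form, it suffices to prove that, under either hypothesis, the first conjunct is equivalent to $A(\mathcal{M}) \subseteq^{st} A(\mathcal{N})$ (resp. $\subseteq^{ust}$).

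First, I would establish the equality $\rho_n(B(\mathcal{M})) = \pi_n(A(\mathcal{M}))$ for every $n \geq 1$ under each of the two hypotheses. Under the second hypothesis both sides are trivial, so the equality is immediate. Under the first hypothesis, the inclusion $\pi_n(A(\mathcal{M})) \subseteq \rho_n(B(\mathcal{M}))$ follows by evaluating $\rho_n$ on $\iota(A(\mathcal{M})) \subseteq B(\mathcal{M})$. For the reverse inclusion, take $b \in B(\mathcal{M})$ with $\rho_n^{\mathcal{N}}(b) \neq 0$, so that $\nu(b) \in nC(\mathcal{N})$; purity of $C(\mathcal{M})$ in $C(\mathcal{N})$ then gives $\nu(b) \in nC(\mathcal{M})$, so there exist $b' \in B(\mathcal{M})$ and $a \in A(\mathcal{M})$ with $b - nb' = \iota(a)$, yielding $\rho_n(b) = \pi_n(a)$. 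This is exactly the argument already sketched in the remark preceding the corollary; I would simply record it inside the proof.

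Once equality (\ref{EquationrhonBpinA}) holds, the maps $\pi_n \colon A(\mathcal{M}) \to A(\mathcal{M})/nA(\mathcal{M})$ identify $\bigcup_{n<\omega}\rho_n(\mathcal{M})$ with the image of $A(\mathcal{M})$ inside $\bigcup_{n<\omega}A(\mathcal{N})/nA(\mathcal{N})$. The resulting bi-interpretability (as highlighted by Remark \ref{RmkTeq}) transfers stable embeddedness in both directions and in both the non-uniform and the uniform sense, which combined with Proposition \ref{ProAbeGrp} produces the two advertised equivalences. The only point requiring a bit of care is making sure the bi-interpretability transfers the \emph{uniform} version of stable embeddedness and handles enrichment of the $A$-sort correctly, but this is precisely what Remark \ref{RmkTeq} is designed to provide; so no genuine obstacle arises, and the corollary is essentially a clean corollary of the preceding proposition together with the verification of (\ref{EquationrhonBpinA}).
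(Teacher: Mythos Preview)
Your proposal is correct and follows essentially the same approach as the paper: the paper also derives the corollary directly from Proposition \ref{ProAbeGrp} after verifying that either hypothesis implies condition (\ref{EquationrhonBpinA}), and then invokes bi-interpretability via Remark \ref{RmkTeq} to identify $\bigcup_{n<\omega}\rho_n(\mathcal{M}) \subseteq^{(u)st} \bigcup_{n<\omega}A/nA(\mathcal{N})$ with $A(\mathcal{M}) \subseteq^{(u)st} A(\mathcal{N})$. Your verification of (\ref{EquationrhonBpinA}) in the purity case matches the paper's argument almost word for word.
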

    
        If the reader is not looking for a perfect characterisation, they can easily slightly weaken the second condition in \ref{CorollaryStablyEmbeddedSubShortExactSequence}:

        \begin{corollary}
        Let $\mathcal{N}$ be models of $T$  and let $\mathcal{M} \subseteq \mathcal{N}$ be a sub-short-exact-sequence. Assume
        that $A(\mathcal{N})/nA(\mathcal{N})$ is finite for all $n\geq 1$. 

        Then, we have:
        \begin{itemize}
        	\item $A(\mathcal{M}) \subseteq^{st}  A(\mathcal{N})$ and $C(\mathcal{M}) \subseteq^{st} C(\mathcal{N})$  imply  $\mathcal{M} \subseteq^{st} \mathcal{N}$ and
        	\item  $A(\mathcal{M}) \subseteq^{ust}  A(\mathcal{N})$ and $C(\mathcal{M}) \subseteq^{ust} C(\mathcal{N})$ imply $\mathcal{M} \subseteq^{ust} \mathcal{N}$.
\end{itemize}
    \end{corollary}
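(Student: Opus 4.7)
The plan is to invoke Proposition \ref{ProAbeGrp} to reduce the goal, and then exploit the finiteness of the quotients $A/nA(\mathcal{N})$ to transfer stable embeddedness from the bare $A$-sort to the union sort $\bigcup_{n<\omega}A/nA$.

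By Proposition \ref{ProAbeGrp}, it suffices to establish that $\bigcup_{n<\omega}\rho_n(\mathcal{M}) \subseteq^{st} \bigcup_{n<\omega}A/nA(\mathcal{N})$ (resp.\ $\subseteq^{ust}$), since the $C$-side condition is exactly a hypothesis. Given an $\mathrm{L}_q$-formula $\phi(x,b)$, I would split the free variables $x$ into $A$-sorted variables $x_A$ (i.e.\ in $A/0A = A$) and \emph{finite-sorted} variables $x_f = (x_{f,0},\ldots,x_{f,k-1})$ with $x_{f,i} \in A/n_iA$ for some $n_i \geq 1$. The product $\prod_{i<k} A/n_iA(\mathcal{N})$ is finite and $\emptyset$-definable, so each of its elements $e_1,\ldots,e_N$ is $\emptyset$-algebraic and, being in a finite $\emptyset$-definable set, $\emptyset$-definable. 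One can then rewrite
\[
\phi(x_A,x_f,b) \equiv \bigvee_{j \leq N}\bigl(x_f = e_j \wedge \phi_j(x_A,b)\bigr),
\]
for appropriate sub-formulas $\phi_j$ involving only $x_A$.

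Combining stable embeddedness of the $A$-sort in $\mathcal{N}$ (Corollary \ref{CoroPureOrth}) with the hypothesis $A(\mathcal{M}) \subseteq^{st} A(\mathcal{N})$, each $\phi_j(x_A,b)$ restricted to $A(\mathcal{M})^{|x_A|}$ is equivalent to an $A$-formula $\psi_j(x_A,m_j)$ with $m_j \in A(\mathcal{M})$. The trace of $\phi$ on $A(\mathcal{M})^{|x_A|} \times \prod_{i<k}\rho_{n_i}(\mathcal{M})$ is then defined by
\[
\bigvee_{\substack{j \leq N \\ e_j \in \prod_{i<k}\rho_{n_i}(\mathcal{M})}}\bigl(x_f = e_j \wedge \psi_j(x_A,m_j)\bigr),
\]
a formula with parameters in $\mathcal{M}$, settling the non-uniform case.

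For the uniform case, starting from a fixed $\phi(x,y)$, I would assemble a uniform defining formula of the shape $\Psi(x_A,x_f,z,w) := \bigvee_{j<l}\bigl(x_f = z_j \wedge \psi(x_A,w_j)\bigr)$, where $\psi(x_A,w)$ is a uniform defining formula for the relevant $\phi_j$'s provided by $A(\mathcal{M}) \subseteq^{ust} A(\mathcal{N})$, and $l$ is an upper bound on the required number of disjuncts (uniform in $b$, since $N$ depends only on the sorts of $x_f$). For each $b$, parameters are chosen by listing the $e_j$'s that actually occur in the trace and padding with repetitions if fewer are needed. The main (and rather minor) obstacle I foresee is handling the empty trace, which I would address by adjoining a switch variable $t$ together with a clause that renders $\Psi$ inconsistent when $t$ is set to a distinguished value.
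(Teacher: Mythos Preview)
Your approach is essentially the same as the paper's: invoke Proposition \ref{ProAbeGrp} and then argue that the finiteness of the quotients lets one upgrade $A(\mathcal{M}) \subseteq^{st} A(\mathcal{N})$ to $\bigcup_{n<\omega}\rho_n(\mathcal{M}) \subseteq^{st} \bigcup_{n<\omega}A/nA(\mathcal{N})$. The paper compresses your case-split into a single sentence (``the union of a (uniformly) stably embedded set and a finite set is automatically (uniformly) stably embedded''), whereas you unfold this explicitly.

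One point to correct: the claim that elements of a finite $\emptyset$-definable set are $\emptyset$-\emph{definable} is false in general --- they are only $\emptyset$-algebraic (think of a two-element sort with a swapping automorphism). Fortunately this does not damage your argument: in the non-uniform case you only use those $e_j$ lying in $\prod_i \rho_{n_i}(\mathcal{M})$, and they are then legitimate parameters from $\bigcup_n\rho_n(\mathcal{M})$; in the uniform case you already pass them through parameter slots $z_j$. Relatedly, your appeal to Corollary \ref{CoroPureOrth} is slightly off-target: what actually lets you turn $\phi_j(x_A,b,e_j)$ into something to which the hypothesis $A(\mathcal{M})\subseteq^{st}A(\mathcal{N})$ applies is the interpretability of the quotients $A/nA$ in $A$ (so that $<A>$-formulas with $<A>$-parameters become $A$-formulas with $A$-parameters, lifting the finite-sort parameters via $\pi_n$), rather than purity of $<A>$ inside $\mathcal{N}$.

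For the uniform case, note that the $\phi_j$ are genuinely different formulas (one per $e_j$), so a single $\psi$ does not come for free from $A(\mathcal{M})\subseteq^{ust}A(\mathcal{N})$. Either allow distinct $\psi_j$'s in your disjunction (this is harmless: there are finitely many, fixed with $\phi$), or first code the index $j$ into an extra $A$-parameter so that one application of uniform stable embeddedness suffices. With these small repairs your argument goes through.
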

    \begin{proof} We deduce from \ref{ProAbeGrp} and from the fact that if $A(\mathcal{N})/nA(\mathcal{N})$ is finite for all $n\geq 1$ that $A(\mathcal{M}) \subseteq^{st}  A(\mathcal{N})$ implies $\bigcup_{n<\omega}\rho_n(\mathcal{M}) \subseteq^{st}  \bigcup_{n<\omega}A/nA(\mathcal{N})$. Indeed, this is due to the fact that the union of a (uniformly) stably embedded set and a finite set is automatically (uniformly) stably embedded.
    \end{proof}
    Combining Theorem \ref{PropRV} and Proposition \ref{ProAbeGrp}, we finally get the following theorem:
    
    \begin{theorem}\label{TheoremReductionGammakNonElementaryPairs}  \index{Valued field! benign Henselian} \index{Stable embeddedness}
    Assume $T$ is a benign theory of Henselian valued fields. 
    Let $\mathcal{L}/ \mathcal{K}$ be a separated extension of valued fields with $\mathcal{L}\models T$. 
    Assume either
    \begin{itemize}
            \item that $\Gamma_K$ is a pure subgroup of $\Gamma_L$,
            \item or that $k_L^\star/(k_L^\star)^n$ is trivial for all $n\geq 1$. 
    \end{itemize}
    
    The following are equivalent:
    
        \begin{enumerate}
            \item $K$ is stably embedded (resp.\ uniformly stably embedded) in $\mathcal{L}$,
            \item $k_K$ is stably embedded (resp.\ uniformly stably embedded) in $k_L$, $\Gamma_K$ is stably embedded (resp.\ uniformly stably embedded) in $\Gamma_L$.
        \end{enumerate}
    \end{theorem}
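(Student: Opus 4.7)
The plan is to concatenate Theorem \ref{PropRV} with Corollary \ref{CorollaryStablyEmbeddedSubShortExactSequence} via the canonical short exact sequence
\[ 1 \longrightarrow k^\star \longrightarrow \RV^\star \longrightarrow \Gamma \longrightarrow 0 \]
attached to the $\RV$-structure of any valued field. Since $T$ is benign, Fact \ref{FactBenignTheories} yields $(\text{EQ})_{\RV}$; so Theorem \ref{PropRV} applies and, under the standing separatedness hypothesis, reduces $K \subseteq^{st} \mathcal{L}$ (respectively $\subseteq^{ust}$) to $\RV_K \subseteq^{st} \RV_L$ (respectively $\subseteq^{ust}$).

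I would then interpret the pair $\RV_K \subseteq \RV_L$ as a sub-short-exact-sequence of enriched abelian groups in the sense of Subsection \ref{SubsectionPreliminariesAbelianGroups}: view the $A$-side $k^\star$ as an $\{A\}$-enrichment carrying the full field structure of $k$ (adjoining a formal zero element is harmless), and view the $C$-side $\Gamma$ as a $\{C\}$-enrichment carrying its ordered abelian group structure. This is a genuine $\{A\}$-$\{C\}$-enrichment because $k$ and $\Gamma$ are orthogonal inside $\RV$ by $(\text{SE})_{\Gamma,k}$. Purity of $k^\star \subseteq \RV^\star$ is automatic from torsion-freeness of $\Gamma$: if $y^n \in k_L^\star$ with $y \in \RV_L^\star$, then $n\cdot \val_{\rv}(y)=0$, forcing $\val_{\rv}(y)=0$ and hence $y \in k_L^\star$.

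Next I would match the hypotheses of the theorem to the two alternatives of Corollary \ref{CorollaryStablyEmbeddedSubShortExactSequence}: the first alternative, $\Gamma_K$ pure in $\Gamma_L$, is exactly the condition that $C(\mathcal{M})$ is pure in $C(\mathcal{N})$; the second, $k_L^\star/(k_L^\star)^n=1$ for all $n\geq 1$, is exactly the multiplicative version of $A(\mathcal{N})/nA(\mathcal{N})=0$. Applying the corollary gives
\[ \RV_K \subseteq^{st} \RV_L \ \Longleftrightarrow\  k_K^\star \subseteq^{st} k_L^\star \text{ and } \Gamma_K \subseteq^{st} \Gamma_L, \]
together with its uniform analogue. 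Combined with the first reduction, this is the content of the theorem, up to the harmless identification of stable embeddedness of $k^\star$ with stable embeddedness of $k$ (the two sorts differ only by the definable point $0$, and by $(\text{SE})_{\Gamma,k}$ the structure induced on $k$ is just the pure field structure).

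Since the substantive work is done in Theorem \ref{PropRV} and in the ACGZ-style Corollary \ref{CorollaryStablyEmbeddedSubShortExactSequence}, the only delicate point to verify is that the enriched short exact sequence honestly fits the ACGZ framework, i.e.\ that no new symbol crosses the $A$-sort and the $C$-sort. This is precisely what the orthogonality of $k$ and $\Gamma$ ensures, so the proof is essentially an assembly of already-established components and requires no further algebraic input.
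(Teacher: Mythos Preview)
Your proposal is correct and follows exactly the route the paper takes: the theorem is obtained there simply by combining Theorem~\ref{PropRV} (reduction to $\RV$) with Proposition~\ref{ProAbeGrp}/Corollary~\ref{CorollaryStablyEmbeddedSubShortExactSequence} applied to the short exact sequence $1\to k^\star\to\RV^\star\to\Gamma\to 0$, viewed as a $\{k\}$-$\{\Gamma\}$-enriched sequence of abelian groups. The only point to sharpen is your identification of the ``delicate step'': the genuine issue is not a symbol crossing $A$ and $C$, but that the ternary $\oplus$ lives on the $B$-sort $\RV^\star$ and one must check it is already definable from the group law, $\iota$, $\nu$, the field structure on $k$ and the order on $\Gamma$ (it is, by a short case analysis on $\val_{\rv}$); orthogonality of $k$ and $\Gamma$ is not what secures this, though the paper glosses over the point as well.
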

    Recall that by Proposition \ref{PropRV2}, stably embedded elementary pairs are necessarily separated. As an elementary subgroup is automatically a pure subgroup, we get: 
    
    \begin{theorem}\label{TheoremReductionGammakElementaryPairs}
    Assume $T$ is a benign theory of Henselian valued fields. 
    Let $\mathcal{K} \preceq \mathcal{L}$ be an elementary pair of models of $T$. The following are equivalent:
        \begin{enumerate}
            \item $K$ is stably embedded (resp.\ uniformly stably embedded) in $\mathcal{L}$,
            \item $\mathcal{L}/\mathcal{K}$ is separated, $k_K$ is stably embedded (resp.\ uniformly stably embedded) in $k_L$ and $\Gamma_K$ is stably embedded (resp.\ uniformly stably embedded) in $\Gamma_L$.
        \end{enumerate}
    \end{theorem}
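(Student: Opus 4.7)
The strategy is to reduce this theorem directly to Theorem \ref{TheoremReductionGammakNonElementaryPairs} by observing that the added hypothesis of elementarity automatically supplies both the separatedness of the extension (via Proposition \ref{PropRV2}) and the purity of the value group extension, which together form exactly the input needed to invoke the non-elementary version.

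For the implication $(1) \Rightarrow (2)$, assume that $\mathcal{K}$ is (uniformly) stably embedded in $\mathcal{L}$. Separatedness of $\mathcal{L}/\mathcal{K}$ is immediate from Proposition \ref{PropRV2}, since we are in an elementary pair. Because $\mathcal{K} \preceq \mathcal{L}$ and benign theories satisfy the Ax-Kochen-Ershov principle $(\text{AKE})_{\Gamma,k}$ (recorded in Fact \ref{FactBenignTheories}), we obtain $\Gamma_K \preceq \Gamma_L$ as ordered abelian groups; in particular $\Gamma_K$ is a pure subgroup of $\Gamma_L$. With separatedness and purity of the value group extension in hand, the hypotheses of Theorem \ref{TheoremReductionGammakNonElementaryPairs} are met, and its equivalence then yields that $k_K$ and $\Gamma_K$ are (uniformly) stably embedded in $k_L$ and $\Gamma_L$ respectively.

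For the converse $(2) \Rightarrow (1)$, separatedness is part of the assumption, and purity of $\Gamma_K$ inside $\Gamma_L$ is again automatic from $\mathcal{K} \preceq \mathcal{L}$. Thus the hypotheses of Theorem \ref{TheoremReductionGammakNonElementaryPairs} are satisfied, and its reverse implication gives that $\mathcal{K}$ is (uniformly) stably embedded in $\mathcal{L}$.

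The entirety of the substantive work has already been done in Proposition \ref{PropRV2} (which requires elementarity to extract separatedness from spherical completeness of finite-dimensional separated subspaces) and in Theorem \ref{TheoremReductionGammakNonElementaryPairs} (which required one of the two extra conditions on $\Gamma$ or $k$). Hence there is no real obstacle here; the only nontrivial observation is that elementarity upgrades ``separated extension with pure value group'' from an auxiliary hypothesis to a free input, which is precisely the small gap bridged by $(\text{AKE})_{\Gamma,k}$ together with the fact that elementary extensions of ordered abelian groups are pure.
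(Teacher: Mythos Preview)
Your proof is correct and follows exactly the same approach as the paper, which derives the theorem in two sentences from Proposition~\ref{PropRV2} (separatedness) and the observation that an elementary subgroup is automatically pure, then applies Theorem~\ref{TheoremReductionGammakNonElementaryPairs}. One small remark: invoking $(\text{AKE})_{\Gamma,k}$ to obtain $\Gamma_K \preceq \Gamma_L$ is unnecessary, since $\Gamma$ is a sort (or interpretable) and elementary extensions automatically restrict to elementary extensions on sorts; the genuine content of AKE is the converse direction.
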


    \subsubsection{Applications}\label{SectionApplications}
    Let us apply Theorem \ref{TheoremReductionGammakNonElementaryPairs} on some examples. We will need:
    
    \begin{fact}[{\cite{Bau80}}]\label{FactMaximalImpliesSeparatedExtension}
     Any extension of a maximal valued field is separated.
    \end{fact}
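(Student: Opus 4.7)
The plan is to proceed by induction on the dimension $n$ of a finite-dimensional $K$-vector subspace $V\subseteq L$, showing at each stage that $V$ admits a separating basis over $\mathcal{K}$. The case $n=1$ is trivial: any non-zero vector is a separating basis. For the inductive step, assume $V'\subseteq V$ of dimension $n$ has a separating basis $\{c_0,\ldots,c_{n-1}\}$, and pick $a\in V\setminus V'$ so that $V=V'+Ka$. The strategy is to find some $w_0\in V'$ such that $c_n:=a-w_0$ realises the maximum of the set $S=\{\val(a-w)\ \vert\ w\in V'\}\subseteq \Gamma_L\cup\{\infty\}$, and then verify that $\{c_0,\ldots,c_{n-1},c_n\}$ is a separating basis of $V$.

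The central point is to show that $S$ admits a maximum. Here is where the maximality hypothesis on $\mathcal{K}$ enters: by the inductive hypothesis $V'$ is separated with separating basis $\{c_0,\ldots,c_{n-1}\}$, so it is a finite-dimensional separated valued $\mathcal{K}$-vector space, and by the spherical completeness lemma proved earlier in the excerpt (using that $\mathcal{K}$ is maximal hence pseudo-complete, together with the inductive argument on dimension via pseudo-Cauchy sequences), $V'$ is spherically complete. Now suppose toward contradiction that $S$ has no maximum, and choose a cofinal sequence $(\gamma_\alpha)_{\alpha<\lambda}$ in $S$, witnessed by elements $w_\alpha\in V'$ with $\val(a-w_\alpha)=\gamma_\alpha$ strictly increasing. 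The closed balls $B_\alpha:=B_{\geq\gamma_\alpha}(w_\alpha)\cap V'$ in $V'$ form a decreasing chain (each is non-empty since $w_{\alpha'}\in B_\alpha$ for $\alpha'>\alpha$). By spherical completeness of $V'$, there is $w_\infty\in\bigcap_\alpha B_\alpha$, which means $\val(a-w_\infty)\geq\gamma_\alpha$ for every $\alpha$, contradicting the cofinality of $(\gamma_\alpha)_{\alpha<\lambda}$ in $S$.

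Once the maximum exists and is attained at $w_0\in V'$, set $c_n=a-w_0$ and write $\gamma_n=\val(c_n)$. It remains to check that $\{c_0,\ldots,c_n\}$ is separating. Given $b=\sum_{i<n}b_ic_i+b_nc_n\in V$ with $b_n\neq 0$, write $v':=\sum_{i<n}(b_i/b_n)c_i\in V'$. Then $v'+c_n=a-(w_0-v')$, so by maximality of $\gamma_n$ in $S$ we have $\val(v'+c_n)\leq\gamma_n$, and combined with the ultrametric inequality this forces $\val(\sum_{i<n}b_ic_i+b_nc_n)=\min(\val(\sum_{i<n}b_ic_i),\val(b_nc_n))$; if $b_n=0$ the separating property reduces to that of $\{c_0,\ldots,c_{n-1}\}$ in $V'$, which holds by induction.

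The main obstacle is the spherical completeness step, which is the only place where the maximality of $\mathcal{K}$ is used; once that is granted (as it is in the lemma already established in the excerpt), the rest is a direct induction on dimension. Note that this argument does \emph{not} require $\mathcal{L}$ to be a model of any particular theory of Henselian valued fields, nor that the extension be elementary, so the conclusion applies to arbitrary valued field extensions $\mathcal{L}/\mathcal{K}$ with $\mathcal{K}$ maximal.
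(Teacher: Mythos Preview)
The paper does not give its own proof of this fact: it is stated as a citation to Baur \cite{Bau80} and used as a black box. So there is no ``paper's proof'' to compare against in the strict sense.

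That said, your argument is correct, and it is exactly the natural way to extract this fact from the ingredients already present in the paper. The spherical completeness lemma you invoke is precisely the one proved inside the proof of Proposition~\ref{PropRV2}, and your induction on the dimension of $V$ is the same induction the paper runs there. The difference is that in Proposition~\ref{PropRV2} the base field $\mathcal{K}$ is only assumed stably embedded (not maximal), so the paper has to go through the extra step of Corollary~\ref{CorollaryDefinablyCompleteSubVectorSpace}: interpreting $V$ in $K^n$ via the definable type, passing to the maximal immediate extension $K'$, and using $(\text{AKE})$ to come back. In your setting $\mathcal{K}$ is already maximal, so the lemma applies directly to $V'$ and the detour is unnecessary. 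Your final verification that $\{c_0,\ldots,c_n\}$ is separating is also the same computation the paper does at the end of the proof of Proposition~\ref{PropRV2}. In short: your proof is correct, self-contained modulo the lemma, and is essentially the specialisation of the paper's argument for Proposition~\ref{PropRV2} to the case where $\mathcal{K}$ is maximal---which is presumably close to what Baur does as well.
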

    Hence, Hahn series $k((\Gamma))$ and Witt vectors $W(k)$ give us a very large branch of examples. We start with the field of $p$-adics:
    
    \begin{corollary}\label{CorollaryQpCp}
    The field of $p$-adics $\mathbb{Q}_p$ is uniformly stably embedded in any algebraically closed valued field containing it. In particular, it is uniformly stably embedded in $\mathbb{C}_p$, the completion of the algebraic closure of $\mathbb{Q}_p$.
    \end{corollary}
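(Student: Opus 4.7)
The plan is to apply Theorem \ref{TheoremReductionGammakNonElementaryPairs} to the extension $\mathbb{Q}_p\subseteq\mathcal{L}$, with $T$ the (benign) theory of algebraically closed valued fields, and to verify each hypothesis in turn.

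First I would check that the extension $\mathcal{L}/\mathbb{Q}_p$ is separated. Since $\mathbb{Q}_p$ is the $p$-adic completion of $\mathbb{Q}$, it is a spherically complete (in particular, maximal) valued field, so Fact \ref{FactMaximalImpliesSeparatedExtension} applies directly: every extension of a maximal valued field is separated. Next I would verify the second alternative hypothesis of Theorem \ref{TheoremReductionGammakNonElementaryPairs}, namely that $k_L^\times/(k_L^\times)^n$ is trivial for every $n\geq 1$. This is immediate because $\mathcal{L}$ is algebraically closed, hence so is its residue field $k_L$, and every nonzero element of an algebraically closed field has $n$-th roots for all $n$. (Alternatively, one could note that $\Gamma_{\mathbb{Q}_p}=\mathbb{Z}$ is automatically pure in the divisible group $\Gamma_L$, giving the first alternative hypothesis instead.)

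It then remains to verify the two conditions on the residue field and on the value group. The residue field of $\mathbb{Q}_p$ is the finite field $\mathbb{F}_p$; since every finite subset of any structure is uniformly stably embedded (each element is definable by its $\mathrm{L}$-type, and finitely many parameters suffice), $k_{\mathbb{Q}_p}$ is uniformly stably embedded in $k_L$. For the value group part, $\Gamma_{\mathbb{Q}_p}=\mathbb{Z}$ sits inside the divisible ordered abelian group $\Gamma_L$. Here the theory of divisible ordered abelian groups is o-minimal and admits quantifier elimination in the ordered group language, so every definable subset of $\Gamma_L$ is a finite union of intervals whose endpoints are $\mathbb{Z}$-linear combinations of the parameters. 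The trace of such a set on $\mathbb{Z}$ is a finite union of intervals in $\mathbb{Z}$, definable uniformly in the Presburger structure on $\mathbb{Z}$ with parameters in $\mathbb{Z}$ (obtained as the appropriate integer floors/ceilings of the $\Gamma_L$-parameters). This yields the uniform stable embeddedness of $\mathbb{Z}$ in $\Gamma_L$.

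Having verified all hypotheses, Theorem \ref{TheoremReductionGammakNonElementaryPairs} delivers that $\mathbb{Q}_p$ is uniformly stably embedded in $\mathcal{L}$. The specific case $\mathcal{L}=\mathbb{C}_p$ follows because $\mathbb{C}_p$ is algebraically closed and contains $\mathbb{Q}_p$. The only step requiring real care is the uniform stable embeddedness of $\mathbb{Z}$ in $\Gamma_L$; everything else is a direct invocation of known structural facts about $\mathbb{Q}_p$ (maximality, finite residue field) and about algebraically closed valued fields (divisible value group, algebraically closed residue field).
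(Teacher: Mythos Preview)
Your proof is correct and follows the same route as the paper's: apply Theorem \ref{TheoremReductionGammakNonElementaryPairs} and verify the residue-field and value-group conditions, with you being more explicit than the paper about separatedness (via maximality of $\mathbb{Q}_p$) and about which alternative hypothesis is in force. One small point: the defining formula $\psi$ for the trace on $\mathbb{Z}$ must be taken in the ordered-abelian-group language of $\Gamma_L$ rather than in Presburger arithmetic, though this is harmless here since the traces in question are finite unions of $\mathbb{Z}$-intervals and hence already definable in that smaller language.
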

    
    \begin{proof}
        By Theorem \ref{TheoremReductionGammakNonElementaryPairs}, it is enough to check that $\mathbb{Z}$ is uniformly stably embedded in any divisible ordered abelian groups containing it (the residue field of $\mathbb{Q}_p$ being finite, there is nothing to prove for the residue field). By an argument similar to that in Remark \ref{RmkPresbArith}, we only have to prove that 1-types over $\mathbb{Z}$ in a divisible ordered abelian group are uniformly definable, which is immediate.  
    \end{proof}
     
    With Theorem \ref{TheoremReductionGammakElementaryPairs}, we recover in particular the theorems of Cubides and Delon (Theorem \ref{ThmCubidesDelon}), and of Cubides and Ye (Theorem \ref{ThmCubidesYe}) in the case of pairs of real closed fields. Here is a list of examples. Notice that some of them are new: 
    
    \begin{examples}
        
        The Hahn series $\mathcal{K}= k((\Gamma))$ where
        \begin{enumerate}
            \item $k\models \text{ACF}_0$, $k= (\mathbb{R},0,1,+,\cdot)$ or $k=(\mathbb{Q}_p,0,1,+,\cdot)$;
            \item $\Gamma = (\mathbb{R},0,+,<)$ or $\Gamma = (\mathbb{Z},0,+,<)$.
        \end{enumerate}
        satisfies $\mathcal{K} \preceq^{ust} \mathcal{L}$ for every elementary extension $\mathcal{L}$ of 
        $\mathcal{K}$.
        \end{examples}

    \begin{proof}
        This is a direct application of Theorem $\ref{TheoremReductionGammakElementaryPairs}$.  Indeed:
        \begin{itemize}
            \item By maximality and Fact \ref{FactMaximalImpliesSeparatedExtension}, all such extensions are separated.
            \item By Fact \ref{FactMS}, $ (\mathbb{R},0,+,<)$ and $(\mathbb{R},0,1,+,\cdot)$ are uniformly stably embedded in any elementary extension.
            \item By Remark \ref{RmkPresbArith}, $(\mathbb{Z},0,+,<)$ is uniformly stably embedded in any elementary extension.
            \item By the work of Cubides and Ye, $\mathbb{Q}_p$ is uniformly stably embedded in any elementary extension.
        \end{itemize}  
    \end{proof}

    A natural example of an algebraically maximal Kaplansky valued field is the valued field
        \[\mathcal{K}=\mathbb{F}_{p}^{alg}(( \mathbb{Z}[1/p] )) \]     
        where $\mathbb{Z}[1/p]$ is the additive group of all rational numbers with denominator a power of $p$. Unfortunately, one can find an elementary extension $\mathcal{L}$ so that $\mathcal{K}$ is not stably embedded in $\mathcal{L}$. In fact, it is the general case: very few valued fields have the property of being stably embedded in any elementary extension, as this property is rare for ordered abelian groups (see Remark \ref{RemarkZROnlyGrupStablyEmbeddedInAnyElementaryExtension}).
        
    \subsection{Unramified mixed characteristic Henselian valued fields}\label{SectionReductionStableEmbeddednessUnramifiedMixedCharacteristic}
        A statement similar to Theorem \ref{PropRV} holds if $\mathcal{L}$ is an unramified mixed characteristic Henselian valued field. As in the case of benign valued fields,  separated pairs are stably embedded if and only if the corresponding pairs of sorts $\RV_{<\omega}$ are stably embedded:

    \begin{theorem} \label{PropRV<omega} Let $\mathcal{L}/ \mathcal{K}$ be a separated extension of valued fields with $\mathcal{L}$ an unramified mixed characteristic Henselian valued field. We see $\mathcal{L}$ as an $\mathrm{L}_{\RV_{<\omega}}$-structure. The following are equivalent:
        \begin{enumerate}
            \item $K$ is stably embedded (resp.\ uniformly stably embedded) in $\mathcal{L}$,
            \item $\RV_{<\omega}(K)$ is stably embedded (resp.\  uniformly stably embedded) in $\RV_{<\omega}(L)$. \index{$\RV_{<\omega}$-sort}\index{Stable embeddedness}
        \end{enumerate}
    \end{theorem}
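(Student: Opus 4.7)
The plan is to mirror the proof of Theorem \ref{PropRV} step by step, using Fact \ref{FactRelQERV} in place of the property $(\text{EQ})_{\RV}$ and the family of ternary predicates $\oplus_{l,m,n}$ of the language $\mathrm{L}_{\RV_{<\omega}}$ in place of the single $\oplus$. As in the benign case, the separating-basis hypothesis allows one to rewrite $L$-parameters as $K$-parameters.

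For $(2 \Rightarrow 1)$, I would start with a field-sorted formula $\phi(x, a)$ with parameters $a$ in $L$. By Fact \ref{FactRelQERV} it is equivalent to
\[\psi\bigl(\rv_{n_0}(P_0(x)), \ldots, \rv_{n_{k-1}}(P_{k-1}(x)), \mathbf{b}\bigr),\]
for some polynomials $P_i \in L[x]$, parameters $\mathbf{b} \in \RV_{<\omega}(L)$ and some $\RV_{<\omega}$-formula $\psi$. Next, let $V$ be the finite-dimensional $K$-vector space generated by the coefficients of the $P_i$'s and fix a separating $K$-basis $c_0, \ldots, c_{d-1}$ of $V$, so that for $x \in K$ one has $P_i(x) = \sum_{j<d} P_i^j(x) c_j$ with $P_i^j \in K[x]$. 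The key observation is that, by the separating-basis property $\val(\sum_j a_j c_j) = \min_j \val(a_j c_j)$, the element $\rv_{n_i}(P_i(x)) = \rv_{n_i}(\sum_j P_i^j(x) c_j)$ is the iterated $\oplus_{n_i,n_i,n_i}$-sum of the products $\rv_{n_i}(P_i^j(x)) \cdot \rv_{n_i}(c_j)$: the minimum-valuation term controls the leading behaviour and prevents the problematic case where the sum has strictly higher valuation than all summands. This gives a quantifier-free $\RV_{<\omega}$-formula that computes $\rv_{n_i}(P_i(x))$ from the $\rv_{n_i}(P_i^j(x))$'s and the $\rv_{n_i}(c_j)$'s. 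Substituting into $\psi$, the trace of $\phi$ on $K$ becomes an $\RV_{<\omega}$-formula in the $\rv_{n_i}(P_i^j(x))$'s (which only involve $x \in K$ and polynomials over $K$) with parameters $\rv_{n_i}(c_j)$ and $\mathbf{b}$ in $\RV_{<\omega}(L)$. Using the assumption that $\RV_{<\omega}(K)$ is stably embedded in $\RV_{<\omega}(L)$, these parameters can be replaced by $\RV_{<\omega}(K)$-parameters, producing a $K$-definition of $\phi(K,a)$.

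The converse $(1 \Rightarrow 2)$ follows the same pattern as in Theorem \ref{PropRV}: given an $\RV_{<\omega}$-formula $\phi(\mathbf{x}, \mathbf{a})$ with $\mathbf{a} \in \RV_{<\omega}(L)$, stable embeddedness of $K$ in $\mathcal{L}$ yields an $\mathrm{L}(K)$-formula $\psi(\mathbf{x}, b)$ with $b \in K$ defining the same trace on $\RV_{<\omega}(K)$; Fact \ref{FactRelQERV} then rewrites $\psi$ so that the field-sorted parameters $b$ appear only through $\rv_m(t(b))$'s for field terms $t$, which all lie in $\RV_{<\omega}(K)$. The uniform case is handled analogously (as in the benign case). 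The main obstacle is the key observation in $(2 \Rightarrow 1)$: showing that the separating-basis property makes the iterated $\oplus_{n_i,n_i,n_i}$-sum well-defined and expressible as a quantifier-free $\RV_{<\omega}$-formula. Conceptually this is the same as in the benign case, but the bookkeeping over the sorts $\RV_n$ must be carried out carefully — in particular when relating $\rv_{n_i}(P_i^j(x) c_j)$ to $\rv_{n_i}(P_i^j(x)) \cdot \rv_{n_i}(c_j)$ via the multiplicative structure on $\RV_{n_i}^\star$, and when checking that the iterated $\oplus$-chain is functional precisely because separatedness forbids extra cancellation.
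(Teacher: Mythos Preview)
Your proposal is correct and follows exactly the approach the paper indicates: the paper's own proof of Theorem~\ref{PropRV<omega} is simply the remark that the proof of Theorem~\ref{PropRV} holds \emph{mutatis mutandis}, replacing $(\text{EQ})_{\RV}$ by $(\text{EQ})_{\RV_{<\omega}}$, and you have spelled out precisely that adaptation. Your care about the iterated $\oplus_{n_i,n_i,n_i}$-sum being functional under the separating-basis hypothesis is well placed and is exactly the content of the displayed equivalence in Definition~\ref{DefinitionSeparated} transported to each $\RV_n$.
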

   The proof of Theorem \ref{PropRV} also holds \textit{mutatis mutandis} for this theorem, where the property $(\text{EQ})_{\RV_{<\omega}}$ is used instead of $(\text{EQ})_{\RV}$. In fact, it holds for any mixed characteristic Henselian valued fields. 
   
   We treat the reduction to the value group and residue field in the more specific case of unramified mixed characteristic Henselian valued fields with perfect residue field. First, we can observe that separatedness is also a necessary condition for elementary pairs to be stably embedded: 
   
   \begin{proposition}\label{PropRV<omega2} \index{Separatedness of valued fields extension}
     Let $\mathcal{L}/ \mathcal{K}$ be an elementary extension of unramified mixed characteristic Henselian valued fields with perfect residue field. If $\mathcal{K}$ is stably embedded in $\mathcal{L}$, then $\mathcal{L}/ \mathcal{K}$ is a separated extension of valued fields.
    \end{proposition}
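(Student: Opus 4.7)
The plan is to imitate the proof of Proposition \ref{PropRV2} step by step, replacing the hypotheses $(\text{Im})$ and $(\text{AKE})_{\RV}$ by their mixed-characteristic counterparts $(\text{Im})_{\RV_{<\omega}}$ and $(\text{AKE})_{\RV_{<\omega}}$. As noted in the remark following Proposition \ref{PropRV2}, the argument never uses relative quantifier elimination directly; it only uses closure of the class of models under maximal immediate extensions, together with an Ax--Kochen--Ershov transfer principle to descend from such maximal immediate extensions. Both ingredients are available in our setting by Fact \ref{FactUnramifiedMixedCharacteristicValuedfields}: the theory of unramified mixed characteristic Henselian valued fields with perfect residue field satisfies $(\text{Im})_{\RV_{<\omega}}$, and the relative quantifier elimination $(\text{EQ})_{\RV_{<\omega}}$ yields $(\text{AKE})_{\RV_{<\omega}}$.

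Concretely, the purely algebraic lemma saying that a separated finite-dimensional valued vector space over a maximal valued field is spherically complete is theory-free, so it carries over unchanged. The next step is the analogue of Corollary \ref{CorollaryDefinablyCompleteSubVectorSpace}: if $\mathcal{K}$ is stably embedded in $\mathcal{L}$ and $V \subseteq L$ is a finite dimensional separated sub-$\mathcal{K}$-vector space, then $V$ is \emph{definably} spherically complete. To prove this, fix a separating basis of $V$ to interpret $V$ in $K^n$ together with its valuation, using stable embeddedness of $\mathcal{K}$ in $\mathcal{L}$ to see that the relevant valuation inequalities are $K$-definable. Then pass to a maximal immediate extension $\mathcal{K}'$ of $\mathcal{K}$: by $(\text{Im})_{\RV_{<\omega}}$ this $\mathcal{K}'$ is again a model of the theory and satisfies $\RV_{<\omega}(\mathcal{K}') = \RV_{<\omega}(\mathcal{K})$, so $(\text{AKE})_{\RV_{<\omega}}$ gives $\mathcal{K} \preceq \mathcal{K}'$. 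Any definable family of nested balls in $V$ therefore picks up a point in the extension to $\mathcal{K}'$ by the algebraic lemma, and that point descends to $\mathcal{K}$ by elementarity.

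Once definable spherical completeness is in hand, the remaining argument of Proposition \ref{PropRV2} transfers verbatim: by induction on dimension, given a separating basis $\{c_0,\ldots,c_{n-1}\}$ of a subspace $V$ and $a \in L \setminus V$, the set $\{\val(w - a) : w \in V\}$ attains a maximum (otherwise the definable family of balls $(B_{\geq \val(w-a)}(a))_{w\in V}$ would have empty intersection, contradicting definable spherical completeness of $V$, since $\tp(a,c_0,\ldots,c_{n-1}/\mathcal{K})$ is definable by stable embeddedness). Choosing $c_n = w - a$ that realises this maximum yields a separating basis $\{c_0,\ldots,c_n\}$ of $\langle V, a\rangle_K$ by the same direct valuation calculation as in the benign case.

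The only genuine obstacle to worry about is whether the bi-sorted Ax--Kochen--Ershov step goes through cleanly at the level of the $\RV_{<\omega}$ leading-term structures, i.e.\ that the maximal immediate extension $\mathcal{K}'$ is indeed an $\RV_{<\omega}$-immediate elementary extension of $\mathcal{K}$; this is precisely the content of $(\text{Im})_{\RV_{<\omega}}$, which, as discussed in Paragraph \ref{Preliminaries Unramified mixed characteristic Henselian valued fields}, is guaranteed by the perfect residue field assumption via the identification $\mathcal{O}_n \simeq W_n(k)$. With that in place, every other ingredient of the benign proof remains intact, and the proposition follows.
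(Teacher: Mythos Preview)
Your proposal is correct and matches the paper's own approach exactly: the paper simply remarks that the proof of Proposition \ref{PropRV2} goes through verbatim once $(\text{AKE})_{\RV}$ and $(\text{Im})$ are replaced by $(\text{AKE})_{\RV_{<\omega}}$ and $(\text{Im})_{\RV_{<\omega}}$, and you have spelled out precisely how that substitution works at each step.
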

    The proof of Proposition \ref{PropRV2} also holds in the mixed characteristic case: the only difference is that one should use the properties $(\text{AKE})_{\RV_{<\omega}}$ and  $(\text{Im})_{\RV_{<\omega}}$  instead of respectively the properties $(\text{AKE})_{\RV}$ and  $(\text{Im})$.

   The reduction to the value group and residue field gives a generalisation of the theorem of Cubides and Ye in the case of $p$-adic fields (Theorem \ref{ThmCubidesYe}).
    
      \begin{theorem}\label{TheoremReductionGammakElementaryPairsMixeCharacteristic} 
        Let $\mathcal{K}$ be an unramified mixed characteristic Henselian valued field with perfect residue field, viewed as a structure in the three-sorted language $\mathrm{L}_{\Gamma,k}$ and let $\mathcal{L}$ be an elementary extension. The following are equivalent:
        \begin{enumerate}
            \item $\mathcal{K}$ is stably embedded (resp.\ uniformly stably embedded) in $\mathcal{L}$,
            \item The extension $\mathcal{L}/\mathcal{K}$ is separated, $k_K$ is stably embedded (resp.\ uniformly stably embedded) in $k_L$ and $\Gamma_K$ is stably embedded (resp.\ uniformly stably embedded) in $\Gamma_L$.
            
        \end{enumerate}
    \end{theorem}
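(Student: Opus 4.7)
The plan is to mimic the proof of Theorem \ref{TheoremReductionGammakElementaryPairs} almost verbatim, substituting the mixed-characteristic analogues of each ingredient listed in Paragraph \ref{Preliminaries Unramified mixed characteristic Henselian valued fields}. The role of $\RV$ is taken by $\RV_{<\omega}$, that of $(\text{EQ})_{\RV}$ by $(\text{EQ})_{\RV_{<\omega}}$, that of $(\text{Im})$ by $(\text{Im})_{\RV_{<\omega}}$, and that of Proposition \ref{PropRV} by Theorem \ref{PropRV<omega}.

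For $(1) \Rightarrow (2)$, separatedness is immediate from Proposition \ref{PropRV<omega2}. To deduce that $k_K$ (resp.\ $\Gamma_K$) is stably embedded in $k_L$ (resp.\ $\Gamma_L$), I would invoke Fact \ref{FactUnramifiedMixedCharacteristicValuedfields}: the sorts $k$ and $\Gamma$ are pure and stably embedded in every model. Given a $k_L$-definable subset $X$ of $k_K^n$, $\mathcal{K} \subseteq^{st} \mathcal{L}$ provides an $\mathcal{K}$-formula defining $X$ on $k_K$, and purity of $k_K$ in $\mathcal{K}$ then replaces it by a $k_K$-formula. The same works for $\Gamma$, and uniformly in parameters for the uniform version.

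For $(2) \Rightarrow (1)$, by Remark \ref{RmkTeq} I work in the language $\mathrm{L}_{\RV_{<\omega}}$. Theorem \ref{PropRV<omega} reduces the claim to showing that $\RV_{<\omega}(K) \subseteq^{st} \RV_{<\omega}(L)$ (resp.\ $\subseteq^{ust}$). Next I apply the short-exact-sequence machinery of Subsection \ref{SubsectionPreliminariesAbelianGroups} level by level. For each $n\geq 1$, Proposition \ref{KerValRV_n} provides the short exact sequence
\[ 1 \longrightarrow W_n(k)^\times \longrightarrow \RV_n^\star \longrightarrow \Gamma \longrightarrow 0, \]
an instance of a pure short exact sequence of abelian groups, enriched on both $A$ and $C$ sides. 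Since $\mathcal{K} \preceq \mathcal{L}$ we have $\Gamma_K \preceq \Gamma_L$, which is in particular a pure extension, so Corollary \ref{CorollaryStablyEmbeddedSubShortExactSequence} applies and yields
\[ \RV_n(K) \subseteq^{st} \RV_n(L) \Longleftrightarrow W_n(k_K)^\times \subseteq^{st} W_n(k_L)^\times \text{ and } \Gamma_K \subseteq^{st} \Gamma_L. \]
Because $k$ is perfect, $W_n(k)^\times$ is interpretable in $k$ (Proposition \ref{KerValRV_n}), and hence the condition on the left factor is equivalent to $k_K \subseteq^{st} k_L$.

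It remains to assemble the level-wise statements into stable embeddedness of the full multisorted structure $\RV_{<\omega}$. Since a formula in $\mathrm{L}_{\RV_{<\omega}}$ mentions only finitely many sorts $\RV_{n_1},\ldots,\RV_{n_s}$, and since the transition maps $\rv_{n\to m}$ and the predicates $\oplus_{l,m,n}$ are induced by the natural projections $W_{l}(k) \to W_m(k)$ together with the addition in $\RV$, everything can be pulled back to a single level $N = \max_i n_i$ where the short-exact-sequence analysis above applies directly. The main obstacle I anticipate is precisely this packaging step: one must verify that the $B$-quantifier-free form produced by Fact \ref{FactACGZ} at level $N$ behaves well under the compatible system of transition maps, so that parameters coming from arbitrary $\RV_n(L)$ remain controlled by $k_K$ and $\Gamma_K$ after the reduction. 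This is a routine but careful bookkeeping argument and should close both the uniform and non-uniform cases simultaneously, giving the theorem.
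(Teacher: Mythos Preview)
Your proposal is correct and follows essentially the same route as the paper. For $(2)\Rightarrow(1)$ your ``packaging'' step is exactly what the paper does: it restricts to a finite truncation $\RV_{\leq N}$, observes that $\ker(\rv_{m\to k})=1+p^kW_m(k)\subseteq W_m(k)^\times$ so that the whole projective system (including the lower sorts and the predicates $\oplus$) becomes a $\{W_N^\times(k)\}$-$\{\Gamma\}$-enrichment of the single exact sequence $1\to W_N(k)^\times\to\RV_N^\star\to\Gamma\to 0$, and then applies Corollary~\ref{CorollaryStablyEmbeddedSubShortExactSequence} once at level $N$ (rather than level by level). For $(1)\Rightarrow(2)$ your direct appeal to purity of $k$ and $\Gamma$ with control of parameters (Fact~\ref{FactUnramifiedMixedCharacteristicValuedfields}) is a slightly shorter variant of the paper's argument, which instead passes through $\RV_{<\omega}(K)\subseteq^{st}\RV_{<\omega}(L)$ and re-applies Fact~\ref{FactACGZ}; both arguments unwind to the same use of the interpretability of $W_{\leq n}(k)$ in $k$.
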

    
    Remark that this theorem only treats the case of elementary pairs. The statement like in Theorem \ref{TheoremReductionGammakNonElementaryPairs} in the mixed characteristic case would in particular involve imperfect residue fields. Although quantifier elimination in such a context is known by the work of Anscombe and Jahnke \cite{AJ19}, we chose to not develop this direction. 
    \begin{proof}
        We treat only the non-uniform case. The uniform one is deduced similarly.\\
        $(2 \Rightarrow 1)$. Assume that $k_K \subseteq^{st} k_L$ and $\Gamma_K \subseteq^{st} \Gamma_L$. 
                Recall that we have the following diagrams for all $n \geq 0$:
             \[\xymatrix{
        1 \ar@{->}[r] & O^\times \ar@{->}[r]\ar@{->}[d]_{\res_n}& L^\star \ar@{->}[r]_\val\ar@{->}[d]_{\rv_n} & \Gamma_L\ar@{->}[r] \ar@{=}[d] &  0 \\
        1 \ar@{->}[r]& W_n(k_L)^\times\ar@{->}[r]^{\iota_n}  & \RV_n^{\star}(L) \ar@{->}[r]^{\val_{\rv_n}} & \Gamma_L \ar@{->}[r]  & 0 }\]
        where $W_n(k_L)$ is the truncated ring of Witt vectors of order $n$ over $k_L$. The same diagram holds for $\mathcal{K}$.     Consider an $\RV_{<\omega}$-formula $\phi(x,b)$ with a tuples of variables $x \in \RV_{<\omega}$ and a tuples of parameters $b\in \RV_{<\omega}(L)$. Without loss of generality, we may simplify the notation and assume that $\phi(x,b)$ only quantifies on the sorts $\RV_k$ for $k\leq n$, that all variables $x$ are $\RV_n$-variables and that parameters are all in $\RV_n(L)$. Define the structure $\RV_{\leq n}(L)$ obtained from $\RV_{<\omega}$ by restricting to the sort $\RV_k$, $k\leq n$:
        
\begin{align*}
\RV_{\leq n}(L):=\{&(\RV_{k}(L))_{k\leq n},(W_{k}(k_L),\cdot,+,0,1)_{k\leq n}, (\Gamma_L,+,0,<), \\
& (\val_{\rv_{k}})_{k\leq n},
(\iota_k)_{k\leq n}, (\rv_{m \rightarrow k})_{k<m \leq n} \}
\end{align*}

         with the finite projective system of maps $(\rv_{m\rightarrow k}: \RV_m(L) \rightarrow \RV_k(L))_{k<m\leq n}$. We can also consider the finite projective system of maps $(\res_{m\rightarrow k}: W_m(L) \rightarrow W_k(L))_{k<m\leq n}$. 
         As $\phi(x,b)$ only quantifies on $\RV_{\leq n}$, we have that
         \[\text{For all } a\in \RV_n(L)^{\vert x \vert}, \quad \RV_{<\omega}(L) \models  \phi(a,b) \ \Leftrightarrow \ \RV_{\leq n}(L) \models  \phi(a,b) \]
         
        (notice that $\RV_{\leq n}(L)$ is not endowed with the full induced structure inherited by $\RV_{<\omega}(L)$ as a formula in the induced structure may quantify over the sorts $\RV_N$ with $N>n$). We can see $\RV_{\leq n}(L)$ as a $\{W_n^\times(k_L)\}$-$\{\Gamma_L\}$-enriched exact sequence of abelian groups. Indeed, the kernel of the map $\rv_{m\rightarrow k}$ is given by $1+p^mW_m(k_L)$, a subset of $W_m^\times(k_L)$. It follows that the diagram
        
        \[\xymatrix{
        1 \ar@{->}[r] & W_n^\times(k_L) \ar@{->}[r]^{\iota_n}\ar@{->}[d]_{\res_{n\rightarrow n-1}}& \RV_n(L)^\star \ar@{->}[r]_{\val_{\rv_n}}\ar@{->}[d]_{\rv_{n\rightarrow n-1}} & \Gamma_L\ar@{->}[r] \ar@{=}[d] &  0 \\
        1 \ar@{->}[r]& W_{n-1}^\times(k_L)\ar@{->}[r]^{\iota_{n-1}}\ar@{-->}[d]  & \RV_{n-1}^{\star}(L) \ar@{->}[r]^{\val_{\rv_{n-1}}}\ar@{-->}[d] & \Gamma_L \ar@{->}[r]\ar@{=}[d]  & 0 \\
        1 \ar@{->}[r]& W_1^\times(k_L)\ar@{->}[r]^{\iota_{1}}  & \RV_1^{\star}(L) \ar@{->}[r]^{\val_{\rv_1}} & \Gamma_L \ar@{->}[r]  & 0},\] 
        
        is fully induced by the following diagram:
        
        \[\xymatrix{
        1 \ar@{->}[r] & W_n^\times(k_L) \ar@{->}[r]^{\iota_n}\ar@{->}[d]_{\res_{n\rightarrow n-1}}& \RV_n(L)^\star \ar@{->}[r]_{\val_{\rv_n}} & \Gamma_L\ar@{->}[r] &  0 \\
        & W_{n-1}(k_L)^\times\ar@{-->}[d] & & \\
        & W_1(k_L)^\times& &}.\]
    This means that $\RV_{\leq n}$ is bi-interpretable with the multisorted structure
   \[ \{\RV_{n}(L),(W_{k}(k_L),\cdot,+,0,1)_{k\leq n}, (\Gamma_L,+,0,<), \val_{\rv_{n}} ,
\iota_n, (\res_{m \rightarrow k})_{k<m \leq n}\}.\]
In order to conclude, notice that $W_n(k)$ is interpretable in $k$ for any field $k$ (see e.g. \cite[Corollary 1.64]{Tou20a}). In fact, we see as well that the structure 
\[W_{\leq n}(k):=W_{n}(k) \rightarrow \cdots \rightarrow W_1(k)\]
is interpretable in $k$. 
        Then, we have by Remark \ref{RmkTeq}:
        \[W_{\leq n}(k_K)  \subseteq^{st} W_{\leq n}(k_L). \]

	We can conclude using Corollary \ref{CorollaryStablyEmbeddedSubShortExactSequence} that $\RV_{\leq n}(K)\subseteq^{st} \RV_{\leq n}(L)$. 
	Them, there exists an $\RV_{\leq n}$-formula $\psi(x,c)$ with parameters $c\in \RV_{\leq n}(K)$ such that $\phi(\RV_n(K),b)=\psi(\RV_n(K),c)$. Then, we have
    \begin{align*}
        \text{For all } a\in \RV_n(K)^{\vert x \vert}, \quad \RV_{<\omega}(L) \models  \phi(a,b)        & \Leftrightarrow \ \RV_{\leq n}(L) \models  \phi(a,b) \\
        & \Leftrightarrow \ \RV_{\leq n}(L) \models  \psi(a,c) \\
        &\Leftrightarrow \ \RV_{<\omega}(L) \models  \psi(a,c).
	\end{align*}
	This shows that  $\RV_{<\omega}(K)$ is stably embedded in $\RV_{<\omega}(L)$. We can conclude using Theorem \ref{PropRV<omega}.

Let us prove $(1 \Rightarrow 2)$. Assume that $\mathcal{K}$ is stably embedded in $\mathcal{L}$. By Theorem \ref{PropRV<omega}, we know that $\RV_{<\omega}(K)$ is stably embedded in $\RV_{<\omega}(L)$ and that the extension $\mathcal{L}/\mathcal{K}$ is separated.
We show that $k_K$ is stably embedded in $k_L$. 
Consider a formula $\phi(x,a)$ with free variables $x$ in the residue sort, and a tuple of parameters $a\in k_L$. Again, we can see the residue field $k_L=W_1(k_L)$  as a sort in the structure 
\begin{align*}
\RV_{\leq n}(L):=\{&(\RV_{k}(L))_{k\leq n},(W_{k}(k_L),\cdot,+,0,1)_{k\leq n}, (\Gamma_L,+,0,<), \\
& (\val_{\rv_{k}})_{k\leq n},
(\iota_k)_{k\leq n}, (\rv_{m \rightarrow k})_{k<m \leq n} \}
\end{align*}
and analogously for $k_K$.
As $\RV_{<\omega}(K)$ is stably embedded in $\RV_{<\omega}(L)$, there is for some $n$ an $\RV_{<\omega}$-formula $\psi(x,b)$ with some $\RV_n$-sorted parameters $b\in K$ and quantifiers in $\RV_n$ such that $\psi(k_K^{\vert x \vert},b)=\phi(k_K^{\vert x \vert},a)$. 
As before, we observe that $\RV_{\leq n}:= \cup_{m\leq n} \RV_m$ is a $\{W_n^\times(k)\}$-$\{\Gamma\}$-enrichment of the exact sequence of abelian groups:
\[1 \rightarrow W_n^\times(k) \rightarrow \RV_{n} \rightarrow \Gamma \rightarrow 0.\]
We may apply Fact \ref{FactACGZ}: there is a formula $\theta(x,y)$ in the language  
\begin{align*}
    \{(W_{k}(k_L),\cdot,+,0,1)_{k\leq n}, (W_n^\times/(W_n^\times)^m)_{m>1}, (\pi_m:W_n^\star \rightarrow W_n^\star/(W_n^\star)^m)_{m>1}, \\
    (\res_{l \rightarrow k})_{k<l \leq n} \}
\end{align*} 

and group-sorted terms $t(y)$ such that 
\[ \theta(k_K^{\vert x \vert},\rho_m(t(b)))= \psi(k_K^{\vert x \vert},b) = \phi(k_K^{\vert x \vert},a)\]
for some $m<\omega$, where $\rho_m$ is defined as in Paragraph \ref{SubsectionPreliminariesAbelianGroups}. As the structure
\[W_{\leq n}(k):= W_n(k)\rightarrow W_{n-1}(k) \rightarrow \cdots \rightarrow W_1(k)\]
is interpretable in $k$ for all fields $k$, we have a formula $\xi(x,c)$ in the language of rings and with parameters $c$ in $k_K$ such that \[\xi(k_K^{\vert x \vert},c)=\theta(k_K^{\vert x \vert},\rho_m(t(b)))= \psi(k_K^{\vert x \vert},b) = \phi(k_K^{\vert x \vert},a).\]
This shows that $k_K$ is stably embedded in $k_L$. Similarly, we show that $\Gamma_k$ is stably embedded in $\Gamma_L$.
    \end{proof}
    
    \begin{example}
    Consider  $\mathcal{K}=Q(W(\mathbb{F}_p^{alg}))$ \index{Ring of Witt vectors} the quotient field of the ring of Witt vectors over the algebraic closure of $\mathbb{F}_p$. Then $\mathcal{K}$ is uniformly stably embedded in any elementary extension. 
    \end{example}
   \begin{proof}
    By Fact \ref{FactMaximalImpliesSeparatedExtension}, any elementary extension $\mathcal{L}/\mathcal{K}$ is separated. The value group $\mathbb{Z}$ of $\mathcal{K}$ is stably embedded in any elementary extension by Remark \ref{RemarkZROnlyGrupStablyEmbeddedInAnyElementaryExtension}, and its residue field is even stable. We can conclude with Theorem \ref{TheoremReductionGammakElementaryPairsMixeCharacteristic}.  
   \end{proof}
    
    \subsection{Axiomatisability of stably embedded pairs}\label{SectionPairs}
        
    Closely related to the question of definability of types is the question of elementarity of the class of stably embedded pairs. Consider $T$ a theory in a language $\mathrm{L}$ and denote by $T_P$ in the language $\mathrm{L}_P=\mathrm{L}\cup \{P\}$ \nomenclature[L]{$\mathrm{L}_P=\mathrm{L}\cup \{P\}$}{} the theory of elementary pairs $\mathcal{M} \preceq \mathcal{N}$ where $P$ is a predicate for $\mathcal{M}$. We consider the following subclasses of models:
    \[\mathcal{C}^{st}_T=\{(\mathcal{N},\mathcal{M}) \ \vert \ \mathcal{N} \succeq \mathcal{M}\models T, \ \mathcal{M}\text{ is stably embedded in }\mathcal{N}\}\]
and 
    \[\mathcal{C}^{ust}_T=\{(\mathcal{N},\mathcal{M}) \ \vert \ \mathcal{N} \succeq \mathcal{M}\models T, \ \mathcal{M}\text{ is uniformly stably embedded in }\mathcal{N}\}.\]
    
    As Cubides and Ye in \cite{CY19}, we are asking whether these classes are first order. If that is the case, we denote respectively by $T^{st}_P$ and $T^{ust}_P$ their respective theory, and we say respectively that $T^{st}_P$ and $T^{ust}_P$ exist.

    \begin{remark}\label{RemarkTPstImpliesTPUST} If $\mathcal{M}\preceq^{st} \mathcal{N}$ is a stably embedded elementary pair of models which is not uniformly stably embedded, then any $\vert \mathrm{L} \vert$-saturated elementary extension of the pair is not stably embedded.
    \end{remark}
    \begin{proof}
    	By assumption, there is a formula $\phi(x,y)$ such that for all formulas $\psi(x,z)$, there is a $b\in N^{\vert y \vert}$ such that for any $c\in M^{\vert z \vert}$, $\phi(M^{\vert x \vert},b) \neq \psi(M^{\vert x \vert},c)$. By a usual coding trick, we have in fact that for any finite set of formulas $\Delta$, there is a $b\in N^{\vert y \vert}$ such that for any $\psi(x,z)\in \Delta$ and $c\in M^{\vert z \vert}$, $\phi(M^{\vert x \vert},b) \neq \psi(M^{\vert x \vert},c)$. This is to say that the type in the language of pairs
    	\[p(y)=\{ \forall c\in M^{\vert z \vert}, \phi(M^{\vert x \vert},y) \neq \psi(M^{\vert x \vert},c)\}_{\psi(x,z) \in \mathrm{L}}\]
    	is finitely satisfiable in $(\mathcal{N}, \mathcal{M})$. Thus, it is realised in any $\vert \mathrm{L} \vert$-saturated elementary  extension of the pair. Such a pair will not be stably embedded.
    \end{proof}
    
    \begin{corollary}
        If $T^{st}_P$ exists, then $T^{ust}_P$ exists and $T^{ust}_P=T^{st}_P$. 
    \end{corollary}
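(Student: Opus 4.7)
The plan is to combine Remark \ref{RemarkTPstImpliesTPUST} with the fact that a first-order axiomatisable class of structures is closed under elementary equivalence. Uniform stable embeddedness trivially implies stable embeddedness, so $\mathcal{C}^{ust}_T \subseteq \mathcal{C}^{st}_T$ regardless of any assumption; the only content is in proving the reverse inclusion under the hypothesis that $\mathcal{C}^{st}_T$ is elementary.

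First I would suppose, for contradiction, that $T^{st}_P$ exists but there exists a pair $(\mathcal{N},\mathcal{M}) \in \mathcal{C}^{st}_T \setminus \mathcal{C}^{ust}_T$, that is, a stably embedded elementary pair which is not uniformly stably embedded. By Remark \ref{RemarkTPstImpliesTPUST}, taking any $|\mathrm{L}|$-saturated elementary extension $(\mathcal{N}',\mathcal{M}')$ of this pair (in the language of pairs $\mathrm{L}_P$) produces a pair in which $\mathcal{M}'$ is \emph{not} stably embedded in $\mathcal{N}'$. In other words, $(\mathcal{N}',\mathcal{M}') \notin \mathcal{C}^{st}_T$.

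On the other hand, since by assumption $\mathcal{C}^{st}_T = \mathrm{Mod}(T^{st}_P)$, the class $\mathcal{C}^{st}_T$ is closed under elementary equivalence in $\mathrm{L}_P$, and in particular under $\mathrm{L}_P$-elementary extensions. Hence $(\mathcal{N}',\mathcal{M}')$ must still lie in $\mathcal{C}^{st}_T$, which is a contradiction. Therefore $\mathcal{C}^{st}_T \subseteq \mathcal{C}^{ust}_T$, and combined with the trivial converse we obtain equality of classes, hence of theories: $T^{ust}_P$ exists and coincides with $T^{st}_P$.

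There is no genuine obstacle here, as the whole work has been carried out in Remark \ref{RemarkTPstImpliesTPUST}; the only mild care needed is to verify that an $|\mathrm{L}|$-saturated $\mathrm{L}_P$-elementary extension of the pair is indeed the right object to invoke the remark on (which follows since the remark's construction of the witnessing type is given in the language of pairs).
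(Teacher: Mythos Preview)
Your proof is correct and follows exactly the argument the paper intends: the corollary is stated immediately after Remark \ref{RemarkTPstImpliesTPUST} without proof because it is meant to be the direct consequence you spell out, namely that an elementary class is closed under elementary extensions, so a non-uniformly stably embedded pair in $\mathcal{C}^{st}_T$ would produce via the remark a saturated extension outside $\mathcal{C}^{st}_T$, contradicting axiomatisability.
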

    
    In a stable theory $T$, elementary submodels are always stably embedded, $\mathcal{C}^{st}_T$ and $\mathcal{C}^{ust}_T$  are simply the class of elementary pairs. In other words, we have $T^{st}_P=T^{ust}_P=T_P$. Let us analyse few more examples. We quickly cover the case of o-minimal theories and the Presburger arithmetic. The reader will find in Appendix \ref{OnPairsOfRandomGraphs} the case of the theory of random graphs. 
     \newline   
    
    \textbf{O-minimal theories}
    
        Consider $T$ an o-minimal theory.  By Marker-Steinhorn, an elementary pair $(R_1,R_0)$ of models of $T$ is stably embedded if and only if it is uniformly stably embedded, if and only if all elements in $R_1$ either realise a type at the infinity or a rational cut. So both $T^{st}_{P}$ and $T^{ust}_{P}$ exist, and $T^{st}_{P} = T^{ust}_{P}$. This theory is given by the theory $T_P$ together with the axiom
        \begin{align*}
         \forall x \notin P \ ( \forall b\in P \ x>b) \vee (\forall b\in P \ x<b)\\
          \vee( \exists a\in P \ a<x \ \wedge \ \forall b\in P \ (a<b)\Rightarrow (x<b)) \\
          \vee( \exists a\in P \ x<a \ \wedge \ \forall b\in P\ (b<a)\Rightarrow (b<x)).
          \end{align*}
    
    \textbf{Presburger arithmetic}\\
        Consider the theory $T$ of $(\mathbb{Z}, 0 ,1, +, <)$. An elementary submodel $\mathcal{Z}$ of a model $\mathcal{Z}'$ is stably embedded if and only if no element in $\mathcal{Z}'$ realises a proper cut. So both $T^{st}_{P}$ and $T^{ust}_{P}$ exist and are equal. This theory is given by the theory $T_P$ together with the axiom saying that $P$ is convex.

    \subsubsection{Benign theories of Henselian valued fields}
    Let $T$ be a completion of a benign theory of Henselian valued fields of equicharacteristic. We denote by $T_{\Gamma}$ and $T_k$ the corresponding theories of the value group and residue field. We require here $T$ to be of equicharacteristic in order to get a canonical maximal Henselian valued field of given value group and given residue field, namely the Hahn series: if $\Gamma \models T_\Gamma$ and $k\models T_k$ then $k((\Gamma))\models T$.
    

    We have the following reduction:
    \begin{proposition}
    	\begin{itemize}
    		\item $T_P^{st}$  exists if and only if $(T_{\Gamma})_P^{st}$  and $(T_{k})_P^{st}$  exist,
         	\item $T_P^{ust}$  exists if and only if $(T_{\Gamma})_P^{ust}$  and $(T_{k})_P^{ust}$  exist.
    \end{itemize}
        
    \end{proposition}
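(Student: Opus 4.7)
The plan is to reduce the proposition to the transfer theorem \ref{TheoremReductionGammakElementaryPairs}: an elementary pair $\mathcal{K}\preceq\mathcal{L}$ of models of $T$ is stably embedded if and only if the extension is separated, $\Gamma_K\preceq^{st}\Gamma_L$, and $k_K\preceq^{st}k_L$. I use two observations throughout. First, separatedness is axiomatised in the pair language by a scheme (one $\forall\exists$-sentence per dimension $n$ stating that each $n$-dimensional $K$-subspace of $L$ admits a separating basis), hence it passes to ultraproducts and elementary equivalence of pairs. Second, by the equicharacteristic assumption together with Ax--Kochen--Ershov, for any $\Gamma\models T_\Gamma$ and $k\models T_k$ the Hahn series $k((\Gamma))$ is a model of $T$, any elementary pair $\Gamma_0\preceq\Gamma_1$ of models of $T_\Gamma$ induces an elementary pair $k((\Gamma_0))\preceq k((\Gamma_1))$ of models of $T$, and such a Hahn pair is automatically separated by Fact \ref{FactMaximalImpliesSeparatedExtension}.

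For the reverse implication, assuming $(T_\Gamma)_P^{st}$ and $(T_k)_P^{st}$ exist, I axiomatise $T_P^{st}$ by $T_P$ together with the separatedness scheme and the natural translations of $(T_\Gamma)_P^{st}$ and $(T_k)_P^{st}$ to the sorts $\Gamma$ and $k$ of $T$. These translations are well-defined in the pair language of $T$ because $\Gamma$ and $k$ are stably embedded sorts and the substructures $\Gamma_K$ and $k_K$ are definable from $P$. Theorem \ref{TheoremReductionGammakElementaryPairs} then shows that the models of this theory are exactly the stably embedded elementary pairs of models of $T$; the uniform variant is proved identically by replacing $\preceq^{st}$ with $\preceq^{ust}$ throughout.

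For the forward implication I use the criterion that a class is first-order axiomatisable if and only if it is closed under ultraproducts and ultraroots. Assume $T_P^{st}$ exists; I show that $(T_\Gamma)_P^{st}$ exists, the case of $(T_k)_P^{st}$ being symmetric. Fix $k_0\models T_k$ and associate to each pair $(\Gamma_0,\Gamma_1)$ of models of $T_\Gamma$ the Hahn pair $(k_0((\Gamma_1)),k_0((\Gamma_0)))$; its residue field pair $(k_0,k_0)$ is trivially stably embedded and the extension is separated, so by the transfer theorem this Hahn pair is stably embedded if and only if $(\Gamma_0,\Gamma_1)$ is. For closure under ultraproducts, if each $(\Gamma_0^i,\Gamma_1^i)$ is stably embedded then each associated Hahn pair is a model of $T_P^{st}$, whence so is the ultraproduct; its value group pair, which equals $\prod_\mathcal{U}(\Gamma_0^i,\Gamma_1^i)$ by \L{}o\'s applied to the $\Gamma$-sort, is thus stably embedded by the transfer theorem. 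For closure under ultraroots, if $(\Gamma_0,\Gamma_1)^\mathcal{U}$ is stably embedded, then the ultrapower of the Hahn pair has stably embedded value group pair, trivially stably embedded residue field pair, and separated extension, hence is a model of $T_P^{st}$; by closure of $T_P^{st}$ under ultraroots the Hahn pair itself is in $T_P^{st}$, and so $(\Gamma_0,\Gamma_1)$ is stably embedded.

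The main obstacle sidestepped by the above is that a direct proof via closure under elementary equivalence would seem to require a pair-Ax--Kochen--Ershov principle, transporting pair-elementary equivalence from value groups to Hahn series. The Frayne-style ultraproduct/ultraroot criterion avoids this heavier machinery; the only point requiring care is that ultrapowers of Hahn pairs are generally not Hahn pairs, but the hypotheses of Theorem \ref{TheoremReductionGammakElementaryPairs} that we need are either first-order (separatedness) or trivial (stable embeddedness of the constant residue field pair), hence preserved by ultrapowers.
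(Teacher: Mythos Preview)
Your proof is correct and follows the same overall strategy as the paper: axiomatise the reverse direction by the separatedness scheme together with $(T_\Gamma)_P^{st}$ and $(T_k)_P^{st}$, and for the forward direction build Hahn pairs over a fixed model of the other sort and use closure under ultraproducts plus Theorem~\ref{TheoremReductionGammakElementaryPairs}. The one difference is in how you handle closure under ultraroots: you route through the Hahn construction again, whereas the paper simply invokes Remark~\ref{Rmkstnonuniform} (stable embeddedness descends along elementary substructures of pairs) directly on the value-group or residue-field pair, without any detour through valued fields. This makes your closing worry about a hidden ``pair Ax--Kochen--Ershov principle'' unnecessary: no such principle is needed, since downward closure is automatic for $\mathcal{C}^{st}$ in any theory.
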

    Notice that the case of algebraically closed  valued fields (including ones of mixed characteristic) follows from the work of Cubides and Delon. The case of p-adically closed valued fields (which is not covered by this theorem) is treated together with the case of real closed valued fields in the work of Cubides and Ye. See \cite[Theorem 4.2.4.]{CY19}. Here is a list of examples (some of them are new):

    \begin{examples}

      Let $T$ be the theory of the Hahn series $\mathcal{K}= k((\Gamma))$ where
        \begin{enumerate}
            \item $k\models \text{ACF}_0$, $k= (\mathbb{R},0,1,+,\cdot)$ or $k=(\mathbb{Q}_p,0,1,+,\cdot)$;
            \item $\Gamma \models \text{DOAG}$ or $\Gamma\models \Th(\mathbb{Z},0,+,<)$.
        \end{enumerate}
        Then $T_P^{st}$ and $T_P^{ust}$ exist. 
    
    \end{examples}

    \begin{proof}
        We prove the non-uniform case. The right-to-left implication is an easy consequence of Theorem \ref{TheoremReductionGammakElementaryPairs}. Indeed one has that the class $\mathcal{C}^{st}_T$ is axiomatised by:
        \[((T_{\Gamma}
        )_P^{st} \cup (T_k)_P^{st} \cup \{K/P(K) \text{ is separated}\}.\]
        Assume $T_P^{st}$ exists. We prove that the class $\mathcal{C}^{st}_{T_k} =\{(k_1,k_2) \ \vert \ k_1 \preceq^{st} k_2 \models T_k,\}$ is axiomatisable. One can show that $\mathcal{C}^{st}_{T_\Gamma}$ is axiomatisable by the same argument. By Theorem \ref{TheoremReductionGammakElementaryPairs}, we have 
        \[\mathcal{C}^{st}_{T_k} =\{(k_1, k_2) \ \vert \ \text{there is }\Gamma \models T_{\Gamma} \ \text{such that } k_1((\Gamma)) \preceq^{st} k_2((\Gamma)) \models T, \ k_1,k_2 \models T_k\}.\]
        Indeed, $T$ admits as models these Hahn series fields, and Hahn series are always maximal, so in particular every extension of such is separated (Fact \ref{FactMaximalImpliesSeparatedExtension}).  This class is closed under ultraproducts: let $I$ be a set of indices and $(k_1^i,k_2^i)\in \mathcal{C}^{st}_k$ for all $i\in I$, with the corresponding $\Gamma^i \models T_\Gamma$. Let $\mathcal{U}$ be an ultrafilter on $I$. Then by Theorem \ref{TheoremReductionGammakElementaryPairs}, we have 
        \[\prod_{\mathcal{U}}(k_1^i((\Gamma^i))) \preceq^{st} \prod_{\mathcal{U}}(k_2^i((\Gamma^i))).\]
        as $k_1^i((\Gamma^i)) \preceq^{st} k_2^i((\Gamma^i))$ for all i and as $\mathcal{C}^{st}_{\Th(K)}$ is closed by ultraproduct.
        Again by Theorem \ref{TheoremReductionGammakElementaryPairs}, we have: 
        \[\prod_{\mathcal{U}}(k_1^i) \preceq^{st} \prod_{\mathcal{U}}(k_2^i),\]
        (the ultraproduct commutes with the residue map).
        Obviously, $\mathcal{C}^{st}_{\Th(k)}$ is stable under isomophism and if $(k_2,k_1)\preceq (k_2',k_1')$ with $k_1' \preceq^{st} k_2'$, then $k_1 \preceq^{st} k_2$ (Remark \ref{Rmkstnonuniform}). This proves that $\mathcal{C}^{st}_{\Th(k)}$ is closed under elementary equivalence and, finally, that it is axiomatisable.
    \end{proof}
    

    \subsubsection{Bounded formulas and elimination of unbounded quantifiers}
    Let $T$ be a complete first order theory in a language $\mathrm{L}$.
    \begin{definition}
        We say that an $\mathrm{L}_P$-formula is bounded if it is of the form: 
        \[\quantifies_0y_0 \in P\ \cdots \quantifies_{n-1}y_{n-1} \in P \  \phi(x,y_0,\ldots,y_{n-1}),\]
        where $\phi(x,y_0,\ldots,y_{n-1})$ is an $\mathrm{L}$-formula and $\quantifies_0,\ldots, \quantifies_{n-1}\in \{\forall, \exists \}$.
        We say that a theory extending the theory of pair $T_P$ eliminates unbounded quantifiers when any formula is equivalent to a bounded formula.
    \end{definition}
    
    Let us cite here examples of elimination of unbounded quantifiers.  
    \begin{examples}
    \begin{itemize}
        \item Assume that $T$ is an o-minimal theory extending the theory of ordered abelian groups with distinguished positive element 1. The theory of dense pairs of models of $T$ eliminates unbounded quantifiers (\cite[Theorem 2.5]{VdD98}).
        \item Assume that $T$ is stable. The theory of belle pairs $T'\supseteq T_P$ eliminates unbounded quantifiers if and only if $T$ does not have the finite cover property (see \cite[Theorem 6]{Poi83}). 
        \item Assume that $T$ is NIP. Let $I \subseteq \mathcal{M}$ be an indiscernible sequence indexed by a dense complete linear order so that every type over $I$ is realised in $\mathcal{M}$. Then $\Th(\mathcal{M},\mathcal{I})$ is bounded ({ \cite[Theorem 3.3]{BB00}}).

    \end{itemize}

    
    \end{examples}
        The reader will find an overview on elimination of unbounded quantifier in \cite{CS13}. We give now another characterisation of stable embeddedness modulo elimination of unbounded quantifiers. 
    
    \begin{proposition}
        Let $\mathcal{M} \preceq \mathcal{N}$ be two models of $T$. Assume that the theory $\Th(\mathcal{N},\mathcal{M})$ in the language of pairs $\mathrm{L}_P$ eliminates unbounded quantifiers. Then $\mathcal{M}$ is uniformly stably embedded in $\mathcal{N}$ if and only if $P$ is a (uniformly) stably embedded predicate for $\Th(\mathcal{N},\mathcal{M})$.
    \end{proposition}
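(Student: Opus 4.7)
The plan is to argue both directions by combining elimination of unbounded quantifiers (EUQ) with the relevant stable embeddedness hypothesis, using the elementarity $\mathcal{M}\preceq\mathcal{N}$ as a translator that swaps $P$-bounded $\mathrm{L}_P$-quantifiers with unbounded $\mathrm{L}$-quantifiers.

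For the direction $(\Rightarrow)$, I start from an arbitrary $\mathrm{L}_P$-formula $\phi(x,y)$ whose $x$ is a tuple of $P$-variables and apply EUQ to replace it by an equivalent bounded formula $\phi'(x,y)\equiv \quantifies_{i<k} w_i\in P\ \theta(x,y,w)$ with $\theta\in \mathrm{L}$. For $a\in M^{|x|}$ and $b\in N^{|y|}$, the pair then satisfies $\phi'(a,b)$ iff (according to the pattern of $\quantifies_i$'s) there exist or for all $w\in M^{k}$ one has $\mathcal{N}\models\theta(a,b,w)$. I then invoke uniform stable embeddedness of $\mathcal{M}$ in $\mathcal{N}$ applied to the $\mathrm{L}$-formula $\theta$ viewed as having free variables $(x,w)$ and parameter $y$: this yields a single $\mathrm{L}$-formula $\beta(x,w,z)$ such that for every $b$ there exists $c_b\in M^{|z|}$ with $\theta(M^{|x|+k},b)=\beta(M^{|x|+k},c_b)$. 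Setting $\psi(x,z):=\quantifies_{i<k}w_i\ \beta(x,w,z)$, which is a pure $\mathrm{L}$-formula and hence an $\mathrm{L}_P$-formula, and using $\mathcal{M}\preceq\mathcal{N}$ to collapse the quantifiers onto $M$, I get $\phi(M^{|x|},b)=\psi(M^{|x|},c_b)$ uniformly in $b$, which is precisely uniform stable embeddedness of the predicate $P$.

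For the direction $(\Leftarrow)$, I start from an $\mathrm{L}$-formula $\phi(x,y)$, viewed as an $\mathrm{L}_P$-formula, and apply uniform stable embeddedness of the predicate $P$ to produce an $\mathrm{L}_P$-formula $\psi(x,z)$ with $z$ a $P$-tuple and, for each $b\in N^{|y|}$, a $c_b\in M^{|z|}$ such that $\phi(M^{|x|},b)=\psi(M^{|x|},c_b)$ uniformly in $b$. Applying EUQ to $\psi$ produces a bounded equivalent $\quantifies_{i<k}w_i\in P\ \theta(x,z,w)$ with $\theta\in\mathrm{L}$. Using $\mathcal{M}\preceq\mathcal{N}$, the $P$-trace of $\psi(x,c_b)$ coincides with the $M$-trace of the pure $\mathrm{L}$-formula $\eta(x,z):=\quantifies_{i<k}w_i\ \theta(x,z,w)$ at parameter $c_b\in M$, which provides the uniform $\mathrm{L}$-definition required for $\mathcal{M}\subseteq^{ust}\mathcal{N}$. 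The non-uniform case runs along identical lines, just dropping the "uniformly in $b$" clauses and allowing the formula produced at each step to depend on $b$.

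The proof contains no real obstacle — no combinatorial lemma is needed, and both implications reduce to a translation step of the same shape. The only point requiring care is keeping track of the three quantifier ranges at play ($N$ for $\mathrm{L}$-formulas in $\mathcal{N}$, $M$ for $\mathrm{L}$-formulas in $\mathcal{M}$, and the $P$-bounded range inside the pair) and of where each appeal to $\mathcal{M}\preceq\mathcal{N}$ lies; this elementarity is precisely what makes the conversion between $\mathrm{L}$ and $\mathrm{L}_P$ work uniformly in the external parameter $b$.
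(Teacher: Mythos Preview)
Your argument is correct and follows the same underlying mechanism as the paper: apply elimination of unbounded quantifiers to reduce to an $\mathrm{L}$-formula, use the relevant stable embeddedness hypothesis to swap parameters into $M$, and use $\mathcal{M}\preceq\mathcal{N}$ to trade $P$-bounded quantifiers for unbounded $\mathrm{L}$-quantifiers (and conversely). The only stylistic difference is that the paper passes to a monster model $(\mathbf{N},\mathbf{M})$ for $(\Rightarrow)$ and to an arbitrary elementary extension $(\mathcal{N}',\mathcal{M}')$ for $(\Leftarrow)$, arguing non-uniformly there and invoking the characterisation of $\subseteq^{ust}$ via elementary extensions of the pair; you instead argue the uniform version directly in $(\mathcal{N},\mathcal{M})$ and implicitly rely on the first-orderness of the resulting statement $\forall y\,\exists z\in P\,\forall x\in P\,(\phi(x,y)\leftrightarrow\psi(x,z))$ to transfer it to the theory. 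Both routes are equivalent and neither is more elementary than the other.
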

    
    \begin{proof} Assume $\mathcal{M}$ to be uniformly stably embedded in $\mathcal{N}$, and let $(\mathbf{N},\mathbf{M})$ be a monster model. So $\mathbf{M}$ is stably embedded in $\mathbf{N}$. Let $\phi(x,a)$ be an $\mathrm{L}_P$-formula with parameter $a\in \mathbf{N}$. It is equivalent to a bounded formula
   \[\quantifies_0y_0 \in P\ \cdots\ \quantifies_{n-1}y_{n-1} \in P\ \psi(x,y_0,\ldots,y_{n-1},a),\]
        with $\psi(x,y_0,\ldots,y_{n-1},z)$ an $\mathrm{L}$-formula and $\quantifies_0,\ldots, \quantifies_{n-1}\in \{\forall, \exists \}$.
    Then the definable set $\psi(\mathbf{M}^{n+1},a)$ is given by some  $\theta(\mathbf{M}^{n+1},b)$ where $\theta(x,y_0,\ldots,y_{n-1},z)$ is an $\mathrm{L}$-formula and $b \in \mathbf{M}^{\vert z \vert}$. Hence $ P(x) \wedge \phi(x,a) $ is equivalent to
  \[P(x) \wedge \quantifies_0y_0\ \cdots \quantifies_{n-1}y_{n-1} \theta(x,y_0,\ldots,y_{n-1},b) .\]
    This proves that $P(x)$ is a stably embedded predicate in $\Th(\mathcal{N},\mathcal{M})$. \\

    Assume that $P$ is a (uniformly) stably embedded predicate. We want to show that $\mathcal{M}$ is uniformly stably embedded in $\mathcal{N}$. It is enough to show that if $(\mathcal{N}',\mathcal{M}') \succeq (\mathcal{N},\mathcal{M})$, then $\mathcal{M}'$ is stably embedded in $\mathcal{N}'$ (see Remark \ref{RmqUstMonsterModel}). Let $(\mathcal{N}',\mathcal{M}')$ be an elementary extension of the pair and let $\phi(x,a)$ be an $\mathrm{L}(\mathcal{N}')$-formula. As the predicate $P$ is stably embedded, the set $\phi(\mathcal{M}',a)$ is given by $\psi(\mathcal{M}',b)$ where $\psi(x,z)$ is an $\mathrm{L}_P$-formula with parameters $b\in \mathcal{M}'$. By elimination of unbounded quantifier, we may assume that $\psi(x,z)$ is of the form 
       \[\quantifies_0 y_0 \in P\ \cdots\ \quantifies_{n-1} y_{n-1} \in P\ \theta(x,y_0,\ldots,y_{n-1},z).\]
    Replace all bounded quantifier over $P$ by the corresponding unbounded quantifier. We obtain an $\mathrm{L}$-formula $\psi'(x,z)$ such that $\psi'(\mathcal{M}',b)=\psi(\mathcal{M}',b)$. This proves that $\mathcal{M}'$ is stably embedded in $\mathcal{N}'$.
    \end{proof}
\appendix

\section{On pairs of random graphs}\label{OnPairsOfRandomGraphs}
We construct in this appendix two stably embedded pairs of random graphs: one is non-uniform and the other is uniform. This gives another illustration of the difference between these notions. We then show that the class of uniformly (resp.\  non-uniformly) stably embedded pairs of random graphs is not elementary.   We also take the occasion to talk about quantifier elimination in the Shelah expansion of a random graph.

    Let $T$ be the theory of the random graph in the language $\mathcal{L}=\{R\}$ and $\mathcal{G}$ a model of $T$. The Marker-Steinhorn criterion holds:
    \begin{remark}
        Let $\bar{a}=a_0,\ldots a_{n-1}$ be a finite tuple of elements in an elementary extension $\mathcal{H}$ of $\mathcal{G}$. Then, by quantifier elimination, one has
        \[\tp(\bar{a}) \cup \bigcup_{i<n} \tp(a_{i}/G)  \vdash \tp(\bar{a}/G).\]
        It follows that for all $n \in \mathbb{N}$, \[T_1(\mathcal{G},\mathcal{H}) \Rightarrow T_n(\mathcal{G},\mathcal{H}),\]
        and
        \[T_1^u(\mathcal{G},\mathcal{H}) \Rightarrow T_n^u(\mathcal{G},\mathcal{H}).\]
    \end{remark}

    We will give two constructions of an elementary extension $\mathcal{H}$ of $\mathcal{G}$ such that all types over $G$ realised in $H$ are definable. The second one is simpler but not uniform. We will give later a generalisation of the first construction. \\
    
\textbf{Construction (H1).}\label{ConstructionH1} Assume $\lambda= \vert G\vert$. We are going to build an increasing sequence $(\mathcal{G}_{j})_{j<\omega}$ of graphs containing $\mathcal{G}$ and of cardinality $\lambda$. 
    We set $\mathcal{G}_0:=\mathcal{G}$. Assume that for some $j<\omega$, $\mathcal{G}_j$ has been constructed, and let $(A_i,B_i)_{i<\lambda}$ be an enumeration of pairs of finite subsets of $G_j$ such that for all $i<\lambda$, $A_i \cap B_i =\emptyset$. For each $i<\lambda$, pick any $m_i \in G$  such that $m_i$ is related to $A_i \cap G$ and unrelated to $B_i\cap G$. Then, for each $i<\lambda$, we consider a new point $\delta_{j}^i$. We set $G_{j+1}=G_j \cup \{\delta_{j}^i\}_{i<\lambda}$ and $R(\delta_{j}^i,g)$ if and only if ($g\in G$ and $R(m_i,g)$) or $g\in A_i$.
    Finally, we set $\mathcal{H}_1=\bigcup_{j<\omega} \mathcal{G}_j$. 
    
    \begin{remark}
        $\mathcal{H}_1$ is by construction a random graph containing $\mathcal{G}$. Also, every $1$-type over $G$ realised in $H_1 \setminus G$ is of the form:
         \[m_{\neq} :=< \{R(x,g) \ \vert \ g\in R(m,G)\} \cup \{x \neq g \ \vert \ g\in G\} >\]
         where $m\in G$. Set of such 1-types is uniformly definable. Hence, one has $\mathcal{G} \preceq^{ust} \mathcal{H}_1$.
    \end{remark}    \textbf{Construction (H2).} Assume $\lambda= \vert G\vert$. We are going to create an increasing sequence $(\mathcal{G}_{j})_{j<\omega}$ of graphs containing $\mathcal{G}$ and of cardinality $\lambda$. 
    Set $\mathcal{G}_0:=\mathcal{G}$ and assume that $\mathcal{G}_j$ has been constructed. Let $(A_i)_{i\in \lambda}$ be an enumeration of finite subsets of $G_j$. For all $i<\lambda$, we consider a new point $(\delta_j^i)$. We set $G_{j+1}=G_j \cup \{\delta_{j}^i\}_{i<\lambda}$ and $R(\delta_{j}^i,g)$ if and only if $g\in A_i$.
    Finally, we set  $\mathcal{H}_2=\bigcup_{j<\omega} \mathcal{G}_j$.
    
    \begin{remark}\label{RmkRG}
    $\mathcal{H}_2$ is by construction a random graph containing $\mathcal{G}$. Also, every $1$-type over $G$ realised in $H_2 \setminus G$ is of the form:
         \[p_A :=<\{R(x,g) \ \vert \ g\in A\} \cup \{\neg R(x,g) \ \vert \ g\notin A\} \cup \{x \neq g \ \vert \ g\in G\}>\]
         where $A$ is a finite subset of $G$. Such 1-types are definable. Hence one has $\mathcal{G} \preceq^{st} \mathcal{H}_2$.
    \end{remark}
    One can easily see that $G$ is not uniformly stably embedded in $H_2$. Indeed, by construction, for all $h\in H_2 \setminus G$, $R(h,G)= \{g\in G \ \vert  \ R(h,g)\}$ is finite and conversely, any finite set is given by $R(h,G)$ for some $h$. If it were uniformly defined by another formula with parameters in $G$, this would contradict the fact that $T$ eliminates $\exists^\infty$ . 
    \\

        \textbf{No axiomatisation of stably embedded pairs of random graphs}\\
        We show that the classes of (uniformly) stably embedded pairs of random graphs is not axiomatisable in the language of pairs $\mathrm{L}_P=\mathrm{L}\cup \{P\}$. Let us recall the notation of Section \ref{SectionPairs}:
            \[\mathcal{C}^{st}_{T}=\{(\mathcal{N},\mathcal{M}) \ \vert \ \mathcal{N} \succeq \mathcal{M}\models T, \ \mathcal{M}\text{ is stably embedded in }\mathcal{N}\}\]
and 
    \[\mathcal{C}^{ust}_{T}=\{(\mathcal{N},\mathcal{M}) \ \vert \ \mathcal{N} \succeq \mathcal{M}\models T, \ \mathcal{M}\text{ is uniformly stably embedded in }\mathcal{N}\}.\]

    We show that $\mathcal{C}^{st}_{RG}$ and $\mathcal{C}^{ust}_{RG}$ are not axiomatisable. By Remarks \ref{RmkRG} and \ref{RemarkTPstImpliesTPUST}, $\mathcal{C}^{st}_{RG}$ is not preserved by elementary equivalence: using the notation of Paragraph \ref{PreliminariesOnDefinableTypes}, we have that $\mathcal{G} \preceq^{st} \mathcal{H}_2$, but in any saturated enough extension $(\mathcal{G}',\mathcal{H}_2')$ of the pair $(\mathcal{G},\mathcal{H}_2)$, $\mathcal{G}'$ is not stably embedded in $\mathcal{H}_2'$.
    
    However, $\mathcal{C}^{ust}_{RG}$ is preserved by elementary extensions (as "for all $y$, there is a $z$ such that $\phi(M^{\vert x\vert},y)=\psi(M^{\vert x\vert},z)$" is a first order property in the language of pairs of the formulas $\psi(x,z)$ and $\phi(x,y)$). Let us show that $\mathcal{C}^{ust}_{RG}$ is not stable by ultraproduct. We will need few facts about random graphs. We leave proofs (by induction) to the reader.
    
    Let us denote by $\phi_{n,m}(x,y_0,\ldots, y_{m-1},z_0, \ldots z_{n-1})$ the formula 
    \[ \bigwedge_{i<n}  R(x,y_i) \wedge \bigwedge_{j<m}\neg R(x,z_j),\]
    for $n,m$ positive integers, $n+m > 0$ and with all variables
    distinct. Let $\Phi$ be the set of such formulas.
    \begin{fact}\label{FactRGSeparate}
        Let $\mathcal{G}$ be a random graph and let $n$ and $m$ be positive integers, $n+m > 0$. For any disjoint finite subsets $A$ and $B$ of $G$, there is an instance of $\phi_{n,m}(x,g)$, with parameters $g=(g_0,\ldots,g_{m+n-1})\in G^{n+m}$ all distinct such that $A \subseteq \phi_{n,m}(G,g) \subseteq B^{\mathsf{C}}$.
    \end{fact}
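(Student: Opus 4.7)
The proof is a straightforward application of the extension property characterising random graphs, namely: for every pair $(U,V)$ of disjoint finite subsets of $G$, there exists $c \in G \setminus (U \cup V)$ with $R(c,u)$ for all $u \in U$ and $\neg R(c,v)$ for all $v \in V$. Given disjoint finite sets $A,B \subseteq G$, the strategy is to choose one ``separating'' coordinate that alone distinguishes $A$ from $B$, and to pad the remaining $n+m-1$ coordinates in an innocuous way so as to keep $A$ inside $\phi_{n,m}(G,g)$.

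Concretely I would proceed in two cases. If $n \geq 1$, I first apply the extension property to $(U,V) = (A,B)$ to pick $y_0 \in G \setminus (A \cup B)$ with $R(y_0,a)$ for every $a \in A$ and $\neg R(y_0,b)$ for every $b \in B$; this single witness already excludes all of $B$. I then pick the remaining coordinates $y_1,\ldots,y_{n-1}$ and $z_0,\ldots,z_{m-1}$ iteratively: each $y_i$ via the extension property applied to $U = A$ and $V = B$ together with the previously chosen coordinates, each $z_j$ via the extension property applied to $U = \emptyset$ and $V = A \cup B$ together with the previously chosen coordinates. This ensures that each $y_i$ is connected to every $a \in A$, each $z_j$ is disconnected from every $a \in A$, and all coordinates are pairwise distinct and disjoint from $A \cup B$. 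If instead $n = 0$ (so $m \geq 1$), I symmetrically take $z_0$ with $R(z_0,b)$ for $b \in B$ and $\neg R(z_0,a)$ for $a \in A$ via $(U,V) = (B,A)$, and then choose $z_1,\ldots,z_{m-1}$ disconnected from $A$ as above.

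Verification is then immediate: every $a \in A$ satisfies $\bigwedge_{i<n} R(a,y_i) \wedge \bigwedge_{j<m} \neg R(a,z_j)$ by construction, so $A \subseteq \phi_{n,m}(G,g)$; every $b \in B$ fails $\phi_{n,m}(b,g)$, either because $\neg R(b,y_0)$ in the first case or because $R(b,z_0)$ in the second, hence $\phi_{n,m}(G,g) \cap B = \emptyset$. There is no genuine obstacle here: all the content is in the extension property of $T$, and the only point of care is the iterative bookkeeping needed to keep the $n+m$ parameters pairwise distinct, which is handled automatically by placing previously chosen elements into the ``disconnected'' side $V$ of each extension application.
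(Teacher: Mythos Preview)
Your argument is correct and is precisely the kind of induction the paper has in mind: the paper leaves this fact to the reader with the hint ``by induction'', and your iterative application of the random-graph extension property (choosing one separating coordinate first, then padding while keeping parameters distinct) is exactly that induction. There is nothing to add.
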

    
    The following fact says that an instance of $\phi_{n,m}$ is `bigger' than instances of $\phi_{n',m'}$ if $n+m<n'+m'$.
    \begin{fact}\label{FactInfiniteCofinfinite}
        Let $\mathcal{G}$ be a random graph.
        \begin{itemize}
            \item Let $\phi(x,y)\in \Phi$, and let $a\in G^{\vert y \vert}$. Then $\phi(G,a)$ is infinite and co-infinite. 
            \item Consider some distinct parameters $g=(g_0,\ldots, g_{n-1},g_0',\ldots, g_{m-1}') \in G^{n+m}$ and some distinct parameters $h=(h_0,\ldots, h_{n'-1},h_0',\ldots, h_{m'-1}') \in G^{n'+m'}$. Assume $n'+m'>n+m$. Then
            \[ \phi_{n,m}(G,g) \setminus \phi_{n',m'}(G,h) \]
            is infinite.
        \end{itemize}
        
    \end{fact}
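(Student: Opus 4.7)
The plan is to reduce everything to the extension axiom of the random graph: for any disjoint finite sets $A, B \subset G$, there exist infinitely many $x \in G$ with $R(x,a)$ for all $a \in A$ and $\neg R(x,b)$ for all $b \in B$. (The ``infinitely many'' follows by applying the axiom repeatedly, adding the finitely many already-found witnesses to $B$.)

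For the first bullet, write $\phi(x,y) = \phi_{n,m}(x,y_0,\ldots,y_{m-1},z_0,\ldots,z_{n-1})$ and let $a=(a_0,\ldots,a_{n+m-1})$. The hypothesis that the components of $a$ are distinct (this is stipulated in Fact \ref{FactInfiniteCofinfinite} via the phrasing ``distinct parameters''; for Fact \ref{FactRGSeparate} one first shrinks tuples before applying) is automatic here because $\phi\in\Phi$ is built with all variables distinct. Applying the extension axiom with $A=\{a_0,\ldots,a_{n-1}\}$ and $B=\{a_n,\ldots,a_{n+m-1}\}$ gives infinitely many witnesses to $\phi(G,a)$, showing it is infinite. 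For co-infiniteness, use that $n+m>0$: if $n\geq 1$ apply the extension axiom with $A=\emptyset$, $B=\{a_0\}$ to obtain infinitely many $x$ with $\neg R(x,a_0)$, hence outside $\phi(G,a)$; if instead $m\geq 1$ switch the roles similarly.

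The second bullet is the key point. The observation is that, in any random graph, $\phi_{n,m}(x,g) \rightarrow \phi_{n',m'}(x,h)$ holds if and only if the positive parameters of $h$ are contained in those of $g$ and the negative parameters of $h$ are contained in those of $g$. Indeed, for any $h_i$ not among the positive $g$-parameters, either it is entirely new—in which case the extension axiom furnishes an $x$ satisfying $\phi_{n,m}(x,g) \wedge \neg R(x,h_i)$—or it sits among the negative $g$-parameters, in which case $\phi_{n,m}(x,g)$ forces $\neg R(x,h_i)$ and the implication already fails on the whole of $\phi_{n,m}(G,g)$. The analogous statement holds for the negative $h$-parameters.

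Now, with all the $h$-parameters distinct, $|\{h_0,\ldots,h_{n'-1}\}|=n'$ and $|\{h_0',\ldots,h_{m'-1}'\}|=m'$, so the inclusion conditions above would force $n'\leq n$ and $m'\leq m$, contradicting $n'+m'>n+m$. Hence there exists some $h$-parameter witnessing the failure of the implication. If it lies among the ``opposite-sign'' $g$-parameters, then $\phi_{n,m}(G,g)\subseteq G\setminus \phi_{n',m'}(G,h)$, so the difference is infinite by the first bullet. Otherwise the offending $h$-parameter is disjoint from both lists of $g$-parameters, and applying the extension axiom to $A = \{g_0,\ldots,g_{n-1}\}$ and $B = \{g_0',\ldots,g_{m-1}'\}\cup\{\text{this }h\text{-parameter}\}$ (disjoint, since the offending parameter is new) produces infinitely many elements of $\phi_{n,m}(G,g)\setminus\phi_{n',m'}(G,h)$. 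There is no real obstacle here beyond a clean case split; the combinatorial heart is just the counting inequality $n'+m' > n+m$ ruling out both inclusions.
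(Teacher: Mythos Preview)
The paper does not actually prove this fact; it only says ``We leave proofs (by induction) to the reader.'' Your argument is direct rather than inductive: you invoke the extension axiom once in each case, after a counting observation rules out the inclusion $\phi_{n,m}(G,g)\subseteq\phi_{n',m'}(G,h)$. This is correct and arguably cleaner than whatever induction the paper had in mind; the counting step (distinct $h$-parameters force $n'\le n$ and $m'\le m$ if both inclusions held) is exactly the right engine, and the case split on whether the bad $h$-parameter collides with an opposite-sign $g$-parameter or is genuinely new is the natural way to finish.

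Two small points to tighten. First, your sentence ``the hypothesis that the components of $a$ are distinct \ldots\ is automatic here because $\phi\in\Phi$ is built with all variables distinct'' is not right: distinctness of \emph{variables} in the formula says nothing about distinctness of the \emph{parameters} plugged in. The first bullet of the Fact, read literally, is false when a positive and a negative parameter coincide (then $\phi(G,a)=\emptyset$); the statement should be read with the same distinctness hypothesis as the second bullet, and your proof then goes through. Second, in the final case of the second bullet you only spell out what to do when the offending $h$-parameter is a \emph{positive} one (you add it to $B$). If it is a negative $h$-parameter $h_j'$ disjoint from the $g$-lists, you must instead set $A=\{g_0,\ldots,g_{n-1},h_j'\}$ and $B=\{g_0',\ldots,g_{m-1}'\}$. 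You gestured at this symmetry earlier, so just make it explicit in the conclusion.
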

    As we did previously with Construction $\mathcal{H}_1$, we construct an extension $\mathcal{H}_{n,m}$ such that the set of traces $\{R(G,h) \ \vert \ h\in H_{n,m}  \}$ is contained in 
    \[\{\phi_{n,m}(G,g)\ \vert \ g=(g_0,\ldots, g_{n-1},g_0',\ldots, g_{m-1}') \in G^{n+m} \text{ with all $g_i,g_j'$ distinct}\},\]
     the set of instances of $\phi(x,y)$ with distinct parameters in $G$ (we use Fact \ref{FactRGSeparate}). If $\mathcal{U}$ is a non-principal ultra-filter in $\mathbb{N}\times \mathbb{N}\setminus \{(0,0)\}$, let us denote by $(\Tilde{\mathcal{H}},\Tilde{\mathcal{G}})$ the ultraproduct $\prod_{\mathcal{U}}(\mathcal{H}_{n,m},\mathcal{G})$. One sees that any set $R(\Tilde{G},a)$ for $a \notin \Tilde{G}$ is infinite and co-infinite in $\Tilde{G}$. But it cannot be given, even up to a finite set, by a (positive) boolean combination of  instances of formulas in $\Phi$ with parameters in $\Tilde{G}$. Indeed, consider such an instance $\phi_{n,m}(x,g)$. Then, by Fact \ref{FactInfiniteCofinfinite}, the projection of $\phi_{n,m}(\tilde{G},g) \setminus R(\tilde{G},a)$ to $\mathcal{H}_{n',m'}$ is infinite if $n'+m'> n+m$. We conclude by Łoś's theorem (and the fact that $\Phi$ is closed under intersection). It follows that $\Tilde{\mathcal{G}}$ is not stably embedded in $\Tilde{\mathcal{H}}$.

    \textbf{More on expansions of random graphs:} 
    We briefly show that in any model $\mathcal{G}$ of the random graph, the Shelah expansion $\mathcal{G}^{\text{sh}}$ (see e.g. \cite[Definition 3.9]{Sim15}) does not eliminate quantifiers. 
    \begin{claim}
            There is a clique $A$ of $G$ consisting of (distinct) elements $a_{0,i},a_{1,i} \ i<\omega$ such that the subset $E=\{(a_{0,i},a_{1,i})\}_{i<\omega}$ of $G^2$ is definable in $\mathcal{G}^{sh}$.
    \end{claim}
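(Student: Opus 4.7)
My plan is to encode the pairing $a_{0,i} \leftrightarrow a_{1,i}$ by an auxiliary family of ``witnesses'' $D = \{d_i : i<\omega\} \subseteq G$, arranged so that each $d_i$ is $R$-related to exactly $a_{0,i}$ and $a_{1,i}$ among the elements of the clique. Setting $A_0 = \{a_{0,i} : i<\omega\}$ and $A_1 = \{a_{1,i} : i<\omega\}$, all three sets $A_0, A_1, D$ will be externally definable subsets of $G$, hence named by unary predicates $P_{A_0}, P_{A_1}, P_D$ in $\mathcal{G}^{\mathrm{sh}}$, and the formula
\[ \phi(x,y) \;:=\; P_{A_0}(x) \wedge P_{A_1}(y) \wedge \exists z\, \bigl(P_D(z) \wedge R(z,x) \wedge R(z,y)\bigr) \]
will define $E$.

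First I will construct $A$ and $D$ simultaneously by induction on $n<\omega$, choosing at stage $n$ three new distinct points $a_{0,n}, a_{1,n}, d_n \in G$. The extension axioms of the random graph state that for any two disjoint finite subsets $X, Y$ of $G$ there is $g \in G$ with $R(g,x)$ for all $x\in X$ and $\neg R(g,y)$ for all $y\in Y$. Using this, I take $a_{0,n}$ to be $R$-adjacent to every previously chosen $a_{k,j}$ (preserving the clique condition) and non-adjacent to every previously chosen $d_j$; then $a_{1,n}$ similarly (and $R$-adjacent to $a_{0,n}$); and finally $d_n$ $R$-adjacent to $a_{0,n}, a_{1,n}$ and non-adjacent to every other previously chosen $a_{k,j}$. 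This ensures inductively that $A$ is a clique and that $d_i$ meets $A$ in exactly $\{a_{0,i}, a_{1,i}\}$.

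Second, I will observe that any subset $S\subseteq G$ is externally definable: the partial type $\{R(z,g) : g \in S\} \cup \{\neg R(z,g) : g \in G\setminus S\}$ is finitely satisfiable in $\mathcal{G}$ by the extension axioms, hence realised by a parameter $c \in G'$ in any sufficiently saturated elementary extension $\mathcal{G}' \succeq \mathcal{G}$, so $S = R(G,c)$ is externally definable. Applying this to $S = A_0, A_1, D$ gives the Shelah-expansion predicates $P_{A_0}, P_{A_1}, P_D$.

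Finally, to verify that $\phi$ defines $E$: if $(x,y) = (a_{0,i}, a_{1,i})$ then $z := d_i$ witnesses the existential. Conversely, if $\phi(x,y)$ holds then $x = a_{0,i}$ and $y = a_{1,j}$ for some $i,j$, and the witness is some $z = d_k \in D$ satisfying both $R(d_k, a_{0,i})$ and $R(d_k, a_{1,j})$; by the construction of $D$, the first forces $k = i$ and the second $k = j$, so $i = j$ and $(x,y) \in E$. The only real work is the bookkeeping in the inductive step, which is routine; there is no serious obstacle.
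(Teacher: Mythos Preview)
Your proof is correct and follows essentially the same strategy as the paper: build the clique $A$ together with an auxiliary externally definable set of witnesses by induction using the extension axioms, then use an existential over the witness predicate to recover the pairing. The only difference is in the encoding. You use a single witness $d_i$ per pair and split $A$ into two externally definable halves $A_0,A_1$ so that the formula $P_{A_0}(x)\wedge P_{A_1}(y)\wedge\exists z\in D\,(R(z,x)\wedge R(z,y))$ pins down the ordered pair. The paper instead keeps a single predicate for $A$ and uses \emph{two} witnesses $b_{0,i},b_{1,i}$ per pair, with the asymmetric adjacency rule $a_{\xi,i}\,R\,b_{\zeta,j}\Leftrightarrow i=j\ \text{and}\ \xi\le\zeta$; the orientation of the pair is then recovered from the pattern $a_i\,R\,b_j\Leftrightarrow i\le j$ rather than from separate predicates. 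Your variant is arguably cleaner (one witness rather than two), at the harmless cost of naming three subsets of $G$ instead of two; either way the substance is the same.
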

        
        \begin{proof}
            First notice that any subset of $G$ is externally definable. We construct by induction the subset $A=\{a_{0,i},a_{1,i}\}_{i<\omega}$ and a subset $B=\{b_{0,i},b_{1,i}\}_{i<\omega}$ of $G$ such that: 
            \begin{itemize}
                \item $A$ is a clique: for all $i,j<\omega$, $\xi,\zeta\in \{0,1\}$, $a_{\xi,i}R \ a_{\zeta,j}$
                \item for all $i,j<\omega$ and $\xi,\zeta\in \{0,1\}$,  $a_{\xi,i}R \ b_{\zeta,j}$ if and only if  $i=j$ and $\xi \leq \zeta$.
            \end{itemize}
            The set $B$ `encodes' that the pairs $(a_{0,i},a_{1,i})$ belong to $E$. Indeed, a definition of $E$ will be given by:
            \[(a_0,a_1)\in {A}^2 \wedge \exists b_{0},b_1 \in B \ a_{i}R \ b_{j} \ \Leftrightarrow \ i\leq j.\]
        
            For some $k<\omega$, assume that $(a_{0,i},a_{1,i}, b_{0,i},b_{1,i})_{i<k}$ has been constructed.
            Take $a_{0,k},a_{1,k}$ any elements of $G$ such that:
            \begin{itemize}
                \item $a_{0,k}R a_{1,k}$
                \item for all $i<k$ and $\xi,\zeta\in \{0,1\}$, $a_{\xi,k}R\ a_{\zeta,i}$.
                \item for all $i<k$, and $\xi,\zeta\in \{0,1\}$, $\neg a_{\xi,k}R b_{\zeta,i}$.
            \end{itemize}
            Then take $b_{0,k}$ and $b_{1,k}$ in $G$ such that for all $i<k$ and $\xi,\zeta\in \{0,1\}$,  $a_{\xi,i}R\  b_{\zeta,k}$ if and only if  $i=k$ and $\xi \leq \zeta$.  This conclude our induction.
        \end{proof}
                It is easy to see that the set $E$ is not externally definable. Indeed, as $A$ is a clique, the only externally definable subsets of $A^2$ are --by quantifier elimination -- Boolean combinations of rectangles $S\times S'$ (where $S,S'\subseteq A$) and of the diagonal $\{(a,a)\ \vert \ a \in A\}$. Note also that the class of externally definable subsets is closed under Boolean combinations. The proposition follows:
        \begin{proposition}
            For any random graph $\mathcal{G}$, the Shelah expansion $\mathcal{G}^{\text{sh}}$ does not eliminate quantifiers.
        \end{proposition}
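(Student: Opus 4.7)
The plan is to exploit the claim just proved together with a structural description of externally definable subsets on the clique $A$. Recall that $\mathcal{G}^{\text{sh}}$ eliminates quantifiers if and only if every $\mathcal{G}^{\text{sh}}$-definable set is externally definable in $\mathcal{G}$. So it suffices to exhibit a single set that is definable in $\mathcal{G}^{\text{sh}}$ but not externally definable in $\mathcal{G}$. The candidate set will be the matching $E=\{(a_{0,i},a_{1,i})\}_{i<\omega}\subseteq A^2$ produced by the claim.

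First I would verify that $E$ is indeed definable in $\mathcal{G}^{\text{sh}}$. Both $A$ and $B$ are externally definable in $\mathcal{G}$ (every subset of $G$ is, since $G$ is small relative to any sufficiently saturated extension), and by the construction of the claim the characterization
\[E=\{(a_0,a_1)\in A^2 \mid \exists b_0,b_1\in B \ (a_0 R b_0 \wedge a_0 R b_1 \wedge a_1 R b_1 \wedge \neg a_1 R b_0)\}\]
holds. This only uses the externally definable predicates for $A$ and $B$ together with a single block of existential quantifiers, so $E$ is $\mathcal{G}^{\text{sh}}$-definable.

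Next I would show that $E$ is not externally definable in $\mathcal{G}$. The key observation is that $A$ is a clique, so the structure induced on $A$ from $\mathcal{G}$ by quantifier-free formulas is pure equality. By quantifier elimination in the random graph, any externally $\mathcal{G}$-definable subset of $A^2$ is (in restriction to $A^2$) a Boolean combination of sets of the forms $S\times A$, $A\times S'$ (for externally definable $S,S'\subseteq A$) and the diagonal $\Delta=\{(a,a)\mid a\in A\}$, since the $R$-instances evaluated on elements of $A$ are identically true. The class of externally definable sets is closed under Boolean combinations. I would then argue that $E$ cannot lie in this class: by inclusion/exclusion one would reduce to the case of a positive combination, and a finite union of rectangles $S_i\times S_i'$ together with $\Delta$ (intersected with complements of the same) cannot produce an infinite ``anti-diagonal'' pairing while avoiding every non-paired $(a_{0,i},a_{1,j})$ with $i\neq j$. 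More concretely, assume for contradiction $E=\bigcup_k (S_k\times S_k')\cup \Delta'$ (modulo the obvious Boolean bookkeeping); pick an index $i$ with $a_{0,i}\in S_k$ for some $k$ with $S_k'$ non-empty, and use the infiniteness of $\{a_{1,j}\mid j\neq i\}$ to find $(a_{0,i},a_{1,j})\in S_k\times S_k'\setminus E$, a contradiction.

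Combining the two parts: $E$ is definable in $\mathcal{G}^{\text{sh}}$ but not quantifier-free definable in $\mathcal{G}^{\text{sh}}$ (since quantifier-free $\mathcal{G}^{\text{sh}}$-definable sets are exactly the externally $\mathcal{G}$-definable ones). Hence $\mathcal{G}^{\text{sh}}$ does not admit elimination of quantifiers. The main obstacle I expect is the clean Boolean-combination argument for ruling out $E$ from externally definable subsets of $A^2$; the right way to package it is probably via the observation that, restricted to $A^2$, ``externally definable'' coincides with ``Boolean combination of rectangles and the diagonal'', so an infinite matching disjoint from $\Delta$ is excluded by a pigeonhole argument on the finitely many rectangles involved.
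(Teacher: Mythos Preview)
Your approach is essentially the same as the paper's: use the claim to see that $E$ is $\mathcal{G}^{\text{sh}}$-definable, observe that externally definable subsets of $A^2$ (with $A$ a clique) are Boolean combinations of rectangles and the diagonal, and argue that the infinite matching $E$ cannot be of this form.

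One small correction to your contradiction step: picking a single pair $(a_{0,i},a_{1,i})\in S_k\times S_k'$ does not ensure that $S_k'$ contains any $a_{1,j}$ with $j\neq i$, so the sentence ``use the infiniteness of $\{a_{1,j}\mid j\neq i\}$ to find $(a_{0,i},a_{1,j})\in S_k\times S_k'\setminus E$'' does not go through as written. The pigeonhole you invoke in your final sentence is the right fix: since $E$ is infinite and the decomposition uses only finitely many rectangles, some $S_k\times S_k'$ contains two distinct matched pairs $(a_{0,i},a_{1,i})$ and $(a_{0,j},a_{1,j})$, and then $(a_{0,i},a_{1,j})\in S_k\times S_k'\setminus(E\cup\Delta)$ gives the contradiction.
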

        One can also ask if adding predicates for all subsets of $G^n$ for a given $n$ gives us quantifier elimination. For the same reason, this does not hold either: one can similarly show that their is a definable subset $ E_n:=\{(a_{0,i},\ldots,a_{n,i})\ \vert \ i<\omega  \}$ of ${(G^{sh})}^{n+1}$ where all $a$'s are distinct and where $A=\{a_{0,i},\ldots,a_{n,i}, \ i<\omega\}$ is a clique. Then, one sees that this set cannot be written as a finite Boolean combination of rectangles $S^{n_1}\times \cdots \times S^{n_k}$ with $2\leq k\leq n+1$, $\sum_{i<k}n_i=n+1$ and $S^{n_i}\subseteq A^{n_i}$, and the diagonal $\{(a,\ldots,a) \in A^{n+1}\}$. 

\newpage
\begin{acknowledgements}

    I want to thank my PhD advisor Martin Hils for his support and his guidance. Many thanks to Pablo Cubides for his enlightenment of his work in \cite{CD16}. In particular, my gratitude to both of them for sharing with me their ideas which led to the proof of Proposition \ref{PropRV2} and motivated the writing of this paper. Many thanks to Artem Chernikov and Martin Ziegler for sharing their results in \cite{ACGZ20}. Finally, many thanks to Martin Bays and Allen Gehret, whose help and discussions have greatly improved the quality of this paper.
\end{acknowledgements}

\bibliographystyle{plain}
\bibliography{Bibtex}

\end{document}